\theoremstyle{plain}
\newtheorem{thm}{Theorem}[section]
\newtheorem{lem}[thm]{Lemma}
\newtheorem{prop}[thm]{Proposition}
\newtheorem{cor}[thm]{Corollary}
\theoremstyle{definition}
\theoremstyle{remark}
\newtheorem{oss}[thm]{Remark}
\newcommand{\Cc}{\mathcal{C}}
\newcommand{\Mm}{\mathcal{M}}
\newcommand{\mm}{\mathfrak{M}}
\def\Hc{{\mathrm{Hopf_{coc}}(\mathrm{Vec}_{G})}}
\begin{document}

\title[Semi-abelian condition for color Hopf algebras]{Semi-abelian condition for color Hopf algebras}

\author{Andrea Sciandra}

\address{
\parbox[b]{\linewidth}{University of Turin, Department of Mathematics ``G. Peano'', via
Carlo Alberto 10, I-10123 Torino, Italy}}
\email{andrea.sciandra@unito.it}

\subjclass[2010]{Primary 18E13; Secondary 18M05; 16T05; 16W50}
\keywords{Semi-abelian categories, Regular categories, Symmetric Monoidal categories, Color Hopf algebras, Super Hopf algebras}
\thanks{This paper was written while the author was member of the "National Group for Algebraic and Geometric Structures and their Applications" (GNSAGA-INdAM). He was partially supported by MIUR within the National Research Project PRIN 2017.}

\maketitle

\begin{abstract}
   Recently in \cite{article1} it was shown that the category of cocommutative Hopf algebras over an arbitrary field $\Bbbk$ is semi-abelian. We extend this result to the category of cocommutative color Hopf algebras, i.e. of cocommutative Hopf monoids in the symmetric monoidal category of $G$-graded vector spaces with $G$ an abelian group, given an arbitrary skew-symmetric bicharacter on $G$, when $G$ is finitely generated and the characteristic of $\Bbbk$ is different from 2 (not needed if $G$ is finite of odd cardinality). We also prove that this category is action representable and locally algebraically cartesian closed, then algebraically coherent. In particular, these results hold for the category of cocommutative super Hopf algebras by taking $G=\mathbb{Z}_{2}$. Furthermore, we prove that, under the same assumptions on $G$ and $\Bbbk$, the abelian category of abelian objects in the category of cocommutative color Hopf algebras is given by those cocommutative color Hopf algebras which are also commutative.
\end{abstract}

\tableofcontents

\section{Introduction}

The notion of semi-abelian category was introduced by G. Janelidze et al. in \cite{article7} in order to capture typical algebraic properties valid for groups, rings and algebras. As it is said in \cite{article7}, semi-abelian categories provide a good categorical foundation for a meaningful treatment of radical and commutator theory and of (co)homology theory of non-abelian structures. Semi-abelian categories are rich in properties, for instance here the notions of semi-direct product, internal action and of crossed module are natural. Some examples of semi-abelian categories are the categories of groups, Lie algebras, (associative) rings and compact groups. In \cite{articlee} M. Gran et al. proved that the category of cocommutative Hopf algebras over a field $\Bbbk$, denoted by Hopf$_{\Bbbk,\mathrm{coc}}$, is semi-abelian when $\Bbbk$ has characteristic 0. Then the result was extended to arbitrary characteristic in \cite{article1}. Hence it becomes natural to ask if this is true also for the category of cocommutative color Hopf algebras, i.e. of cocommutative Hopf monoids in the category Vec$_{G}$ of $G$-graded vector spaces, which we denote by $\Hc$, where $G$ is an abelian group since we know that, in this case, Vec$_{G}$ becomes a symmetric monoidal category by using a skew-symmetric bicharacter on $G$ which modifies the braiding of Vec$_{\Bbbk}$ given by the usual tensor flip. We show that $\Hc$ is semi-abelian if the abelian group $G$ is finitely generated and the characteristic of the field $\Bbbk$ is not 2 (not needed if $G$ is finite of odd cardinality). This generalizes the result for ordinary cocommutative Hopf algebras since we can recover Hopf$_{\Bbbk,\mathrm{coc}}$ by taking $G$ as the trivial group in which case the symmetric monoidal category Vec$_{G}$ is exactly Vec$_{\Bbbk}$. Furthermore, if we consider $G=\mathbb{Z}_{2}$ we obtain that the category of cocommutative super Hopf algebras, extensively used in Mathematics and Physics, is semi-abelian if char$\Bbbk\not=2$. $\\$

The organization of the paper is the following. After calling back some basic notions and results about monoidal categories and (color) Hopf algebras, we prove the completeness and the cocompleteness of $\Hc$ by showing explicitly limits and colimits and the protomodularity of $\Hc$ by using a categorical result. We also observe that $\Hc$ is locally presentable, which is not guaranteed in general for the category of (cocommutative) Hopf monoids in a symmetric monoidal category. Then we show the regularity of $\Hc$ through the same steps of \cite{article1}. In particular, we obtain a generalization of a theorem by K. Newman \cite[Theorem 4.1]{article8} for cocommutative color Hopf algebras in the case char$\Bbbk\not=2$ and the abelian group $G$ is finitely generated, by using \cite[Theorem 3.10 (3)]{article2} about cocommutative super Hopf algebras together with a braided strong monoidal functor from the category Vec$_{G}$ to the category Vec$_{\mathbb{Z}_2}$ from \cite{article5}. Then, through an equivalent characterization given in \cite{article7}, we obtain that $\Hc$ is semi-abelian, still in case the abelian group $G$ is finitely generated and char$\Bbbk\not=2$. Finally, we also show that, under the same assumptions on $G$ and $\Bbbk$, the category $\Hc$ is action representable and locally algebraically cartesian closed (then algebraically coherent) and that the category of abelian objects in $\Hc$ consists of those cocommutative color Hopf algebras which are also commutative and then, as a consequence, this category is abelian.

\section{Preliminaries}

\subsection{Monoidal categories}
First we recall some basic facts about monoidal categories, which can be found in \cite{article9,l}. Let $(\mathcal{M},\otimes,\mathbf{I},a,l,r)$ be a monoidal category. We write $(\mathcal{M},\otimes,\mathbf{I})$ without the constraints $a$, $l$ and $r$ if these are clear from the context and we usually omit to write $a$ in the computations since it will be clear when it is needed, in order to have slightly more compact formulas. We know that we can consider the category Mon($\mathcal{M}$) of monoids in $\mathcal{M}$, whose objects will be denoted as $(A,m,u)$, and the category Comon($\mathcal{M}$) of comonoids in $\mathcal{M}$, whose objects will be denoted as $(C,\Delta,\epsilon)$. Recall that a monoid $M'$ is a $\textit{submonoid}$ of a monoid $M$, provided there exists a monoid morphism $i:M'\to M$ such that it is a monomorphism in $\Mm$. Analogously a comonoid $C'$ is a $\textit{subcomonoid}$ of a comonoid $C$, provided there exists a comonoid morphism $i:C'\to C$ such that it is a monomorphism in $\Mm$. In case $\Mm$ has a braiding $c$ the categories Mon$(\Mm)$ and Comon$(\Mm)$ become monoidal with the same constraints $a,l,r$. In this case, given monoids $(M_{1},m_{1},u_{1})$ and $(M_{2},m_{2},u_{2})$ in $\Mm$, the tensor product $\otimes$ is such that we have $(M_{1},m_{1},u_{1})\otimes(M_{2},m_{2},u_{2}):=(M_{1}\otimes M_{2},m,u)$ where 
\[
m:=(m_{1}\otimes m_{2})\circ(\mathrm{Id}_{M_{1}}\otimes c_{M_{2},M_{1}}\otimes\mathrm{Id}_{M_{2}})\ \text{ and }\ u:=(u_{1}\otimes u_{2})\circ r_{\mathbf{I}}^{-1}. 
\]
The unit object of Mon$(\Mm)$ is given by $(\mathbf{I},r_{\mathbf{I}},\mathrm{Id}_{\mathbf{I}})$. Similarly, given comonoids $(C_{1},\Delta_{1},\epsilon_{1})$ and $(C_{2},\Delta_{2},\epsilon_{2})$ in $\Mm$, $(C_{1},\Delta_{1},\epsilon_{1})\otimes(C_{2},\Delta_{2},\epsilon_{2}):=(C_{1}\otimes C_{2},\Delta,\epsilon)$ is a comonoid where
\[
\Delta:=(\mathrm{Id}_{C_{1}}\otimes c_{C_{1},C_{2}}\otimes\mathrm{Id}_{C_{2}})\circ(\Delta_{1}\otimes\Delta_{2})\ \text{ and }\ \epsilon:=r_{\mathbf{I}}\circ(\epsilon_{1}\otimes\epsilon_{2}). 
\]
The unit object of Comon($\Mm$) is given by $(\mathbf{I},r_{\mathbf{I}}^{-1},\mathrm{Id}_{\mathbf{I}})$. When Mon$(\Mm)$ and Comon$(\Mm)$ are monoidal we can consider monoids and comonoids in them. Hence we have that
\begin{equation}\label{iso}
\text{Bimon}(\Mm)\cong\text{Mon(Comon}(\Mm))\cong\text{Comon(Mon}(\Mm))
\end{equation}
where $\text{Bimon}(\Mm)$ is the category of bimonoids in $\Mm$, since for $(B,m,u,\Delta,\epsilon)$ the fact that $m$ and the $u$ are morphisms of comonoids is equivalent to that $\Delta$ and $\epsilon$ are morphisms of monoids (see e.g. \cite[Proposition 1.11]{l}) while
\begin{equation}\label{coco}
\text{Mon(Mon}(\Mm))\cong\text{Mon}_{\mathrm{c}}(\Mm)\ \text{ and }\ \text{Comon(Comon}(\Mm))\cong\text{Comon}_{\mathrm{coc}}(\Mm)
\end{equation}
which are the category of commutative monoids and of cocommutative comonoids in $\Mm$ respectively and this follows from the $\textit{Eckmann–Hilton argument}$: $\Delta_{C}$ is a morphism of comonoids if and only if $C$ is cocommutative and $m_{A}$ is a morphism of monoids if and only if $A$ is commutative (see e.g. \cite[Section 1.2.7]{l}). We recall that a monoid $(A,m,u)$ is $\textit{commutative}$ if $m=m\circ c_{A,A}$ and a comonoid $(C,\Delta,\epsilon)$ is $\textit{cocommutative}$ if $c_{C,C}\circ\Delta=\Delta$. Also recall that a bimonoid $B'$ is a $\textit{sub-bimonoid}$ of a bimonoid $B$, provided there exists a bimonoid morphism $i:B'\to B$ such that it is a monomorphism in $\Mm$. Given $(C,\Delta,\epsilon)\in\text{Comon}(\mathcal{M})$ and $(A,m,u)\in\text{Mon}(\mathcal{M})$, Hom$_{\mathcal{M}}(C,A)$ is an (ordinary) monoid with $\textit{convolution\ product}$ such that, given $f,g:C\rightarrow A$ in $\mathcal{M}$, the product is $f*g:=m\circ(f\otimes g)\circ\Delta$ and the unit is $u\circ\epsilon$. Hence we can consider the category Hopf($\Mm$) of Hopf monoids in $\mathcal{M}$, whose objects are bimonoids $B$ in $\mathcal{M}$ equipped with a morphism $S:B\rightarrow B$ ($\textit{antipode}$) which is the convolution inverse of $\text{Id}_{B}$. The monoidal categories Mon($\Mm$) and Comon($\Mm$) may fail to be braided and then the categories Hopf($\Mm$), Bimon($\Mm$), Mon$_{\mathrm{c}}(\Mm)$ and Comon$_{\mathrm{coc}}(\Mm)$ may fail to be monoidal but, when the braided category $\Mm$ is symmetric, i.e. $c_{X,Y}^{-1}=c_{Y,X}$ for every $X$ and $Y$ in $\Mm$, these categories are all braided and symmetric with the same braiding $c$ and the same constraints $a,l,r$ of $\Mm$ (see \cite[Section 1.2.7]{l}). Indeed, if $\Mm$ is symmetric, given $A$ and $B$ monoids in $\Mm$, then $c_{A,B}:A\otimes B\to B\otimes A$ is a morphism of monoids and then \text{Mon}($\Mm$) is a symmetric monoidal category and, dually, \text{Comon}($\Mm$) is a symmetric monoidal category. Iterating these results and applying \eqref{iso} and \eqref{coco}, one can deduce that \text{Bimon}($\Mm$), \text{Mon}$_{\mathrm{c}}(\Mm)$ and $\text{Comon}_{\mathrm{coc}}(\Mm)$ are symmetric monoidal categories as well. Furthermore, if $\Mm$ is symmetric, given $(B,S_{B})$ and $(B',S_{B'})$ in Hopf($\Mm$) we have that $(B,S_{B})\otimes(B',S_{B'}):=(B\otimes B',S_{B}\otimes S_{B'})$ is in Hopf($\Mm$). The antipode is a bimonoid morphism $S:B\to B^{\mathrm{op},\mathrm{cop}}$ where $(B^{\mathrm{op},\mathrm{cop}},m^{\mathrm{op}},u,\Delta^{\mathrm{cop}},\epsilon)$ is a bimonoid with $m^{\mathrm{op}}=m\circ c_{B,B}$ and $\Delta^{\mathrm{cop}}=c_{B,B}\circ\Delta$, hence it satisfies 
\[
m\circ c_{B,B}\circ(S\otimes S)=S\circ m\ \text{ and }\ (S\otimes S)\circ\Delta=c_{B,B}\circ\Delta\circ S 
\]
and so if $B$ is commutative then $S$ is a morphism of monoids while if $B$ is cocommutative then $S$ is a morphism of comonoids. Also note that if $B$ is commutative or cocommutative then $S^2=\mathrm{Id}_{B}$. Indeed, for instance, if $B$ is cocommutative then
\[
\begin{split}
m\circ(S\otimes S^{2})\circ\Delta&=m\circ(\mathrm{Id}_{B}\otimes S)\circ(S\otimes S)\circ\Delta=m\circ(\mathrm{Id}_{B}\otimes S)\circ c_{B,B}\circ\Delta\circ S=u\circ\epsilon\circ S=u\circ\epsilon
\end{split}
\]
and, analogously, $m\circ(S^{2}\otimes S)\circ\Delta=u\circ\epsilon$.
\begin{invisible}
\[
\begin{split}
m\circ(S\otimes S^{2})\circ\Delta&=m\circ(S\otimes S)\circ(\mathrm{Id}_{B}\otimes S)\circ\Delta=m\circ c_{B,B}\circ(S\otimes S)\circ(\mathrm{Id}_{B}\otimes S)\circ\Delta\\&=S\circ m\circ(\mathrm{Id}_{B}\otimes S)\circ\Delta=S\circ u\circ\epsilon=u\circ\epsilon
\end{split}
\]
and, analogously, $m\circ(S^{2}\otimes S)\circ\Delta=u\circ\epsilon$, then $S^{2}=\mathrm{Id}_{B}$. In the same way, with $B$ cocommutative
 then $S^{2}=\mathrm{Id}_{B}$. 
\end{invisible}
Since we use several times these facts in the following and, in particular, the fact that Comon$_{\mathrm{coc}}(\Mm)$ is a monoidal category is central for our proof of protomodularity, then we will work with a symmetric monoidal category. $\\$

Finally, recall that, given monoidal categories $(\Mm,\otimes,\mathbf{I},a,l,r)$ and $(\Mm',\otimes,\mathbf{I}',a',l',r')$ (where we do not use different notations for $\otimes$ for notation convenience), a monoidal functor $(F, \phi^{0},\phi^{2}):(\Mm,\otimes,\mathbf{I},a,l,r)\to(\Mm',\otimes,\mathbf{I}',a',l',r')$ consists of a functor $F:\Mm\to\Mm'$, a morphism $\phi^{2}_{X,Y}:F(X\otimes Y)\to F(X)\otimes F(Y)$ in $\Mm'$ for every $X,Y$ in $\Mm$ which is natural in $X$ and $Y$ and a morphism $\phi^0:F(\mathbf{I})\to\mathbf{I}'$ in $\Mm'$ such that 
\[
(\mathrm{Id}_{F(X)}\otimes\phi^{2}_{Y,Z})\circ\phi^{2}_{X,Y\otimes Z}\circ F(a_{X,Y,Z})=a'_{F(X),F(Y),F(Z)}\circ(\phi^2_{X,Y}\otimes\mathrm{Id}_{F(Z)})\circ\phi^{2}_{X\otimes Y,Z}
\]
and
\[
l'_{F(X)}\circ(\phi^{0}\otimes\mathrm{Id}_{F(X)})\circ\phi^{2}_{\mathbf{I},X}=F(l_{X})\ \text{ and }\ r'_{F(X)}\circ(\mathrm{Id}_{F(X)}\otimes\phi^{0})\circ\phi^{2}_{X,\mathbf{I}}=F(r_{X}).
\]
Furthermore, $F$ is called \textit{strong} if $\phi^0$ and $\phi^2_{X,Y}$ are isomorphisms for every $X,Y$ in $\Mm$ and \textit{strict} if $\phi^0$ and $\phi^2_{X,Y}$ are identities for every $X,Y$ in $\Mm$. If $\Mm$ and $\Mm'$ are (symmetric) braided with braidings $c$ and $c'$ respectively, $F$ is called (symmetric) \textit{braided} if $c'_{F(X),F(Y)}\circ\phi^2_{X,Y}=\phi^2_{Y,X}\circ F(c_{X,Y})$. $\\$

If $\mathcal{M}$ is the category $\mathrm{Vec}_{\Bbbk}$ of vector spaces over a field $\Bbbk$, we have the usual notions of $\Bbbk$-algebras, $\Bbbk$-coalgebras, $\Bbbk$-bialgebras and $\Bbbk$-Hopf algebras, usually denoted without $\Bbbk$. In the following we always omit $\Bbbk$ but it will be understood. For classical results and notions about the theory of Hopf algebras we refer to \cite{book4} and \cite{book5}.

\subsection{Semi-abelian categories}
Here we recall some definitions needed for the notion of semi-abelian category. For the notions of limits and colimits of a functor, as for other basic notions of category theory, we refer to \cite{book1,book3}. $\\$

A finitely complete category $\Cc$ is \textit{regular} if any arrow of $\Cc$ factors as a \textit{regular epimorphism} (i.e. the coequalizer of a pair of morphisms of $\mathcal{C}$) followed by a monomorphism and if, moreover, regular epimorphisms are stable under pullbacks along any morphism. A \textit{relation} on an object $X$ of $\Cc$ is an equivalence class of triples $(R,r_{1},r_{2})$, where $R$ is an object of $\Cc$ and $r_{1},r_{2}:R\to X$ is a pair of jointly monic morphisms of $\Cc$, and two triples $(R,r_{1},r_{2})$ and $(R',r'_{1},r'_{2})$ are identified when they both factor through each other. 
\begin{invisible}
Furthermore a relation $(R,r_{1},r_{2})$ on $X$ is called: $\\$ $\\$
1) \textit{reflexive} if there is a (necessarily unique since $r_{1}$ and $r_{2}$ are jointly monic) morphism $\delta:X\to R$ such that $r_{1}\circ\delta=\mathrm{Id}_{X}=r_{2}\circ\delta$; $\\$ $\\$
2) \textit{symmetric} if there is a (necessarily unique) morphism $\sigma:R\to R$ such that $r_{1}\circ\sigma=r_{2}$ and $r_{2}\circ\sigma=r_{1}$; $\\$ $\\$
3) \textit{transitive} if there is a (necessarily unique) morphism $\tau:R\times_{X}R\to R$ such that $r_{1}\circ\tau=r_{1}\circ\pi_{1}$ and $r_{2}\circ\tau=r_{2}\circ\pi_{2}$, where $(R\times_{X}R,\pi_{1},\pi_{2})$ is the pullback of $r_{1}$ and $r_{2}$. 
\end{invisible}
An \emph{equivalence relation} in $\Cc$ is a relation $R$ on an object $X$ which is reflexive, symmetric and transitive. A regular category $\Cc$ is (Barr)-$\textit{exact}$ if any equivalence relation $R$ in $\Cc$ is \textit{effective}, i.e. it is the kernel pair of some morphism in $\Cc$. Recall also that a category $\Cc$ is $\textit{protomodular}$, in the sense of \cite{book2}, if it has pullbacks of split epimorphisms along any morphism and all the inverse image functors of the fibration of points reflect isomorphisms. We know that, as it is said for instance in \cite[Proposition 3.1.2]{book2}, if $\Cc$ is pointed (i.e. it has a zero object) and finitely complete, the protomodularity can be expressed by simply asking that the Split Short Five Lemma holds in $\Cc$. Finally, a category $\Cc$ is $\textit{semi-abelian}$ if it is pointed, finitely cocomplete, (Barr)-exact and protomodular. Many details and properties about semi-abelian categories can be found in \cite{book2}.

\section{Color Hopf algebras}

In this section we recall what color Hopf algebras are and how they differ from common Hopf algebras. We consider the category Vec$_{G}$ of $G$-graded vector spaces over an arbitrary field $\Bbbk$ where $G$ is a group. We add conditions on the group $G$ along the way, to make it clear why these are needed. Objects in Vec$_{G}$ are vector spaces $V=\bigoplus_{g\in G}{V_{g}}$ where $V_{g}$ is a vector subspace of $V$ for every $g\in G$ and the morphisms in Vec$_{G}$ are linear maps $f:V\to W$ which preserve gradings, i.e. such that $f(V_{g})\subseteq W_{g}$ for every $g\in G$. We know that this category is monoidal with $\otimes$ the tensor product of Vec$_{\Bbbk}$ and unit object $\Bbbk=\bigoplus_{g\in G}{\Bbbk_{g}}$ where $\Bbbk_{g}=\{0\}$ if $g\not=1_{G}$ and $\Bbbk_{1_{G}}=\Bbbk$, with $1_{G}$ the identity of $G$. Indeed, given $V=\bigoplus_{g\in G}{V_{g}}$ and $W=\bigoplus_{g\in G}{W_{g}}$, we have that $V\otimes W=\bigoplus_{g\in G}{(V\otimes W)_{g}}$ where $(V\otimes W)_{g}=\bigoplus_{a\in G}({V_{a}\otimes W_{a^{-1}g}})$. Also the associativity constraint and left and right unit constraints are the usual ones of Vec$_{\Bbbk}$. 

\begin{oss}
Remember that the category Vec$_{G}$ is isomorphic to the category ${}^{\Bbbk G}\mm$ of left comodules over the group algebra $\Bbbk G$ with isomorphism given by $F:\mathrm{Vec}_{G}\to{}^{\Bbbk G}\mm$, $V=\bigoplus_{g\in G}{V_{g}}\mapsto(V,\rho)$ with $\rho(\sum_{g\in G}{v_{g}})=\sum_{g\in G}{g\otimes v_{g}}$ and $F(f)=f$ and inverse given by $G:{}^{\Bbbk G}\mm\to\mathrm{Vec}_{G}$, $(V,\rho)\mapsto V=\bigoplus_{g\in G}{V_{g}}$ where $V_{g}=\{w\in V\ |\ \rho(w)=g\otimes w\}$ and $G(f)=f$. It is known that ${}^{\Bbbk G}\mm$ is a Grothendieck category and then abelian, since this is true in general for $^{C}\mm$ (and $\mm^{C}$) with $C$ a coalgebra, not always true in general for a coalgebra over a ring (see e.g. \cite[3.13]{bre}). So here monomorphisms are exactly the injective maps and epimorphisms the surjective maps. Observe that, given a graded vector space $V=\bigoplus_{g\in G}{V_{g}}$ and a vector subspace $V'\subseteq V$, we can always consider the graded vector space $\bigoplus_{g\in G}{V'\cap V_{g}}\subseteq V'$. Furthermore, $V'$ is a \textit{graded subspace} of $V$ if it is a graded vector space such that the inclusion $i:V'\to V$ is in Vec$_{G}$ and this happens if and only if for every $x\in V'$, which is $x=\sum_{g\in G}{x_{g}}$ with $x_{g}\in V_{g}$, then $x_{g}\in V'$ for any $g\in G$; in this case $V'$ has the induced grading $V'=\bigoplus_{g\in G}{V'_{g}}$ where $V'_{g}=V'\cap V_{g}$. Furthermore, we can always consider the graded vector space $\bigoplus_{g\in G}{V_{g}/(V_{g}\cap V')}$ and there is a canonical isomorphism of vector spaces $\bigoplus_{g\in G}{\frac{V_{g}}{V_{g}\cap V'}}\cong\frac{\bigoplus_{g\in G}{V_{g}}}{\bigoplus_{g\in G}{V_{g}\cap V'}}$ which is $V/V'$ in the case $V'$ is a graded subspace of $V$. In this case, we can also consider $\bigoplus_{g\in G}{(V_{g}+V')/V'}$, where $(V_{g}+V')/V'$ is a vector subspace of $V/V'$ for every $g\in G$. Notice that $V_{g}/(V_{g}\cap V')\cong(V_{g}+V')/V'$ canonically as vector spaces for every $g\in G$ and then $\bigoplus_{g\in G}{(V_{g}+V')/V'}$ and $\bigoplus_{g\in G}{V_{g}/(V_{g}\cap V')}$ can be identified in Vec$_{G}$. Thus, when $V'$ is a graded subspace of $V$, we have that $V/V'=\bigoplus_{g\in G}{(V_{g}+V')/V'}$ is in Vec$_{G}$ and it is called \textit{quotient graded vector space}. 
\end{oss}

\begin{oss}\label{Im and qu}
We recall that if $f:A\to B$ is in Vec$_{G}$ then $\mathrm{ker}(f)$ and $\mathrm{Im}(f)$ are graded subspaces of $A$ and $B$ respectively. 
\begin{invisible}
In fact $\mathrm{ker}(f)$ is a graded subspace of $A$ since, given $x=\sum_{g\in G}{x_{g}}\in\mathrm{ker}(f)$, then $0=f(x)=\sum_{g\in G}{f(x_{g})}\in\bigoplus_{g\in G}{B_{g}}$ and then, since the sum is direct, we must have $f(x_{g})=0$ for every $g\in G$, i.e. $x_{g}\in\mathrm{ker}(f)$ for every $g\in G$. Furthermore $f(A)$ is a graded subspace of $B$, indeed, given $y=\sum_{g\in G}{y_{g}}\in f(A)$, then there is $x=\sum_{g\in G}{x_{g}}\in A$ such that $y=f(x)=\sum_{g\in G}{f(x_{g})}$. Thus, since the sum is direct, $y_{g}=f(x_{g})\in f(A)$ for every $g\in G$. 
\end{invisible}
If $f$ is surjective, the grading of $B=f(A)$ is the unique induced by $A$ through $f$, i.e. $B_{g}=f(A_{g})$ for every $g\in G$. 
\begin{invisible}
Indeed $f(A_{g})\subseteq B_{g}$ and given $y\in B_{g}$ there exists $x=\sum_{a\in G}{x_{a}}\in A$ such that $f(x)=\sum_{a\in G}{f(x_{a})}=y$ since $f$ is surjective and then $y=f(x_{g})$ by comparing the degrees. Note that, given a canonical projection $\pi:A\to A/I$ in Vec$_{G}$, this is consistent since $(A/I)_{g}=A_{g}/A_{g}\cap I=\pi(A_{g})$, for every $g\in G$.
\end{invisible}
\end{oss}

\subsection{Graded (co)algebras}
The objects of the categories Mon(Vec$_{G}$) and Comon(Vec$_{G}$) are called $G$-$\textit{graded\ algebras}$ and $G$-$\textit{graded\ coalgebras}$ respectively, which we usually call without $G$. Many details and properties about graded algebras and graded coalgebras can be found in \cite{Na,Na2}. Note that graded algebras and graded coalgebras are often used to denote algebras and coalgebras graded over $\mathbb{N}$, while here gradings will be always over $G$. \medskip

A graded algebra is an algebra $(A,m,u)$ where $A=\bigoplus_{g\in G}{A_{g}}$ is a graded vector space such that $m$ and $u$ preserve gradings, i.e. for every $h,k\in G$ we have $A_{h}A_{k}\subseteq A_{hk}$ and $u(\Bbbk)\subseteq A_{1_{G}}$ and a morphism of graded algebras is a morphism of algebras that preserves gradings. Since monomorphisms in Vec$_{G}$ are exactly the injective maps, a submonoid of a graded algebra $(A,m,u)$, called $\textit{graded subalgebra}$, is a graded subspace $V\subseteq A$ such that $1_{A}\in V$ and $m(V\otimes V)\subseteq V$. Indeed, in this case, $V$ is a graded vector space with $V_{g}=V\cap A_{g}$ for every $g\in G$, an algebra and 
\[
V_{g}V_{h}=m((V\cap A_{g})\otimes(V\cap A_{h}))=m((V\otimes V)\cap(A_{g}\otimes A_{h}))\subseteq m(V\otimes V)\cap m(A_{g}\otimes A_{h})\subseteq V\cap A_{gh}=V_{gh}
\]
for every $g,h\in G$. Furthermore, if we consider a graded two-sided ideal $I$ of $A$ such that $A/I=\bigoplus_{g\in G}{(A_{g}+I)/I}$ is a graded vector space, we know that $(A/I,u_{A/I},m_{A/I})$ is an algebra with $u_{A/I}=\pi\circ u_{A}$ and $m_{A/I}\circ(\pi\otimes\pi)=\pi\circ m_{A}$ where $\pi:A\to A/I$ is the canonical projection and it is graded since $u_{A/I}$ and $m_{A/I}$ are in Vec$_{G}$ with $\pi$, $u_{A}$ and $m_{A}$ in Vec$_{G}$; it is called \textit{quotient graded algebra}. 
\medskip

Similarly, a graded coalgebra is a coalgebra $(C,\Delta,\epsilon)$ where $C=\bigoplus_{g\in G}{C_{g}}$ is a graded vector space such that $\Delta$ and $\epsilon$ preserve gradings, i.e. $\Delta(C_{g})\subseteq\bigoplus_{h\in G}({C_{h}\otimes C_{h^{-1}g}})$ and $\epsilon(C_{g})\subseteq\delta_{g,1_{G}}\Bbbk$ for every $g\in G$ and a morphism of graded coalgebras is a morphism of coalgebras that preserves gradings. A subcomonoid of a graded coalgebra $(C,\Delta,\epsilon)$, called $\textit{graded subcoalgebra}$, is a graded vector subspace $V\subseteq C$ such that $\Delta(V)\subseteq V\otimes V$ ($\epsilon(V)\subseteq\Bbbk$ is automatic). Indeed, in this case, $V$ is a graded vector space, a coalgebra and 
\[
\Delta(V_{g})=\Delta(V\cap C_{g})\subseteq\Delta(V)\cap\Delta(C_{g})\subseteq(V\otimes V)\cap(C\otimes C)_{g}=(V\otimes V)_{g},
\]
for every $g\in G$. In fact observe that, since $V$ is a graded subspace of $C$, then $V\otimes V$ is a graded subspace of $C\otimes C$. If $I$ is a graded two-sided coideal of $C$, then $C/I$ is a graded vector space and it is a coalgebra with $\Delta_{C/I}\circ\pi=(\pi\otimes\pi)\circ\Delta_{C}$ and $\epsilon_{C/I}\circ\pi=\epsilon_{C}$, where $\pi:C\to C/I$ is the canonical projection. Thus $C/I$ is a graded coalgebra because $\Delta_{C/I}$ and $\epsilon_{C/I}$ clearly preserve gradings since $\epsilon_{C}$, $\Delta_{C}$ and $\pi$ are in Vec$_{G}$ and it is called \textit{quotient graded coalgebra}.

\subsection{Color bialgebras and color Hopf algebras}
We are interested in studying Hopf monoids in Vec$_{G}$ but, in order to do this, first we need that Vec$_{G}$ is braided. One can give to Vec$_{G}$ a braiding by using a bicharacter $\phi$ on $G$ (see for example \cite{article5}), i.e. a map $\phi:G\times G\rightarrow\Bbbk-\{0\}$ such that 
\[
\phi(gh,l)=\phi(g,l)\phi(h,l)\ \text{ and }  \phi(g,hl)=\phi(g,h)\phi(g,l)\ \text{ for every } g,h,l\in G.
\]
It follows immediately that $\phi(1_{G},g)=\phi(g,1_{G})=1_{\Bbbk}$ for all $g\in G$. The monoidal category Vec$_{G}$ is braided with braiding $c_{X,Y}:X\otimes Y\rightarrow Y\otimes X$ such that $c_{X,Y}(x\otimes y)=\phi(g,h)y\otimes x$ for $x\in X_{g}$, $y\in Y_{h}$ and $g,h\in G$, defined on the components of the grading and extended by linearity, for every $X$ and $Y$ in Vec$_{G}$. In order to obtain that the braiding is in Vec$_{G}$, the group $G$ needs to be $\textit{abelian}$ as it is said in \cite[Section 1.1]{article5} or in \cite[pag. 193]{articlee2}. Hence, from now on, we will always consider $G$ an abelian group. As we said before we also want that the category Vec$_{G}$ is symmetric and then we have to require that $\phi$ is a $\textit{commutation factor}$ on $G$ that is a skew-symmetric bicharacter on $G$, i.e. that $\phi$ satisfies further $\phi(g,h)\phi(h,g)=1_{\Bbbk}$ for $g,h\in G$. We will usually work on the components of the grading and all maps will be understood to be extended by linearity. For the braiding we use the same notation of \cite{article5} and we write $c(x\otimes y)=\phi(|x|,|y|)y\otimes x$ with $x\in X$ and $y\in Y$, intending to work on homogeneous components and extend by linearity. Note that, given $X,Y$ and $Z$ in Vec$_{G}$, the hexagon relations
\[
(\text{Id}_{Y}\otimes c_{X,Z})\circ(c_{X,Y}\otimes\text{Id}_{Z})=c_{X,Y\otimes Z}\ \text{ and }\ (c_{X,Z}\otimes\text{Id}_{Y})\circ(\text{Id}_{X}\otimes c_{Y,Z})=c_{X\otimes Y,Z}
\]
on elements $x\in X$, $y\in Y$ and $z\in Z$, in terms of $\phi$, are exactly
\[
\phi(|x|,|y|)\phi(|x|,|z|)=\phi(|x|,|y||z|)=\phi(|x|,|y\otimes z|)\ \text{and}\ \phi(|x|,|z|)\phi(|y|,|z|)=\phi(|x||y|,|z|)=\phi(|x\otimes y|,|z|).
\]
Also note that, if $X$ and $Y$ are graded coalgebras, then 
\[
l_{X}\circ(\epsilon_{Y}\otimes\mathrm{Id}_{X})\circ c_{X,Y}=l_{X}\circ c_{X,\Bbbk}\circ(\mathrm{Id}_{X}\otimes\epsilon_{Y})=r_{X}\circ(\mathrm{Id}_{X}\otimes\epsilon_{Y})
\]
and $r_{Y}\circ(\text{Id}_{Y}\otimes\epsilon_{X})\circ c_{X,Y}=
l_{Y}\circ(\epsilon_{X}\otimes\mathrm{Id}_{Y})$ which on elements $x\in X$ and $y\in Y$ are 
\begin{equation}\label{to}
\phi(|x|,|y|)\epsilon(y)x=x\epsilon(y)\ \text{ and }\ \phi(|x|,|y|)y\epsilon(x)=\epsilon(x)y.
\end{equation}
Note that if $y\in Y_{g}$ with $g\not=1_{G}$ then $\epsilon(y)=0$ and if $y\in Y_{1_{G}}$ then $\phi(|x|,|y|)=1_{\Bbbk}$, so we still have that $\epsilon(y)x=x\epsilon(y)$ (as clearly it must be) but these relations will be useful in the computations. 
A graded algebra $A$ is \textit{commutative} if $ab=\phi(|a|,|b|)ba$ for every $a,b\in A$ and a graded coalgebra $C$ is \textit{cocommutative} if $x_{1}\otimes x_{2}=\phi(|x_{1}|,|x_{2}|)x_{2}\otimes x_{1}$ for every $x\in C$, where we shall adapt Sweedler notation and write $\Delta(x)=x_{1}\otimes x_{2}$ always assuming homogeneous terms in the sum. Note that, given $A$ and $B$ in Mon(Vec$_{G}$), the multiplication of $A\otimes B$ is given by $(a\otimes b)\cdot(c\otimes d)=\phi(|b|,|c|)ac\otimes bd$ and, given $C$ and $D$ in Comon(Vec$_{G}$), the comultiplication of $C\otimes D$ is given by $\Delta_{C\otimes D}(c\otimes d)=\phi(|c_{2}|,|d_{1}|)c_{1}\otimes d_{1}\otimes c_{2}\otimes d_{2}$. $\\$

The objects of the categories Bimon(Vec$_{G}$) and Hopf(Vec$_{G}$) are called $\textit{color bialgebras}$ and $\textit{color Hopf algebras}$, respectively. A color bialgebra is a datum $(B,m,u,\Delta,\epsilon)$ where $(B,m,u)$ is a graded algebra, $(B,\Delta,\epsilon)$ is a graded coalgebra, and the two structures are compatible in the sense that $\Delta$ and $\epsilon$ are graded algebra maps or, equivalently, $m$ and $u$ are graded coalgebra maps. Hence $B=\bigoplus_{g\in G}{B_{g}}$ is an ordinary algebra and an ordinary coalgebra with $m,u,\Delta,\epsilon$ which preserve gradings, but the condition of compatibility between the two structures differs from that in Bialg$_{\Bbbk}$, only for the part that involves the braiding. So we have that 
\[
\epsilon(ab)=\epsilon(a)\epsilon(b),\ \epsilon(1_{B})=1_{\Bbbk},\ \Delta(1_{B})=1_{B}\otimes1_{B}\ \text{and}\ \Delta(ab)=\phi(|a_{2}|,|b_{1}|)a_{1}b_{1}\otimes a_{2}b_{2}
\]
for every $a,b\in B$. A morphism of color bialgebras is a morphism of algebras and of coalgebras which preserves gradings. Given a color bialgebra $B$, a sub-bimonoid $B'\subseteq B$, called \textit{color sub-bialgebra}, will be a graded subalgebra which is also a graded subcoalgebra (the compatibility between the two structures is that of $B$). Furthermore, given a color bialgebra $B$ and a graded bi-ideal $I$ (which is a two-sided ideal and two-sided coideal) we know that $B/I$ is a graded algebra and a graded coalgebra and we show that the compatibility between the two structures is automatically maintained. In fact, given $\pi:B\to B/I$ the canonical projection, we have that
\[
\Delta_{B/I}\circ m_{B/I}\circ(\pi\otimes\pi)=\Delta_{B/I}\circ\pi\circ m_{B}=(\pi\otimes\pi)\circ\Delta_{B}\circ m_{B}
\]
and 
\[
\begin{split}
&(m_{B/I}\otimes m_{B/I})\circ(\text{Id}_{B/I}\otimes c_{_{B/I,B/I}}\otimes\text{Id}_{B/I})\circ(\Delta_{B/I}\otimes\Delta_{B/I})\circ(\pi\otimes\pi)=\\&(m_{B/I}\otimes m_{B/I})\circ(\text{Id}_{B/I}\otimes c_{_{B/I,B/I}}\otimes\text{Id}_{B/I})\circ(\pi\otimes\pi\otimes\pi\otimes\pi)\circ(\Delta_{B}\otimes\Delta_{B})=\\&(m_{B/I}\otimes m_{B/I})\circ(\pi\otimes\pi\otimes\pi\otimes\pi)\circ(\text{Id}_{B}\otimes c_{B,B}\otimes\text{Id}_{B})\circ(\Delta_{B}\otimes\Delta_{B})=\\&(\pi\otimes\pi)\circ(m_{B}\otimes m_{B})\circ(\text{Id}_{B}\otimes c_{B,B}\otimes\text{Id}_{B})\circ(\Delta_{B}\otimes\Delta_{B})=(\pi\otimes\pi)\circ\Delta_{B}\circ m_{B}
\end{split}
\]
since $c$ is natural and $B$ is a color bialgebra. Now, since $\pi\otimes\pi$ is surjective, we have that
\[
(m_{B/I}\otimes m_{B/I})\circ(\text{Id}_{B/I}\otimes c_{_{B/I,B/I}}\otimes\text{Id}_{B/I})\circ(\Delta_{B/I}\otimes\Delta_{B/I})=\Delta_{B/I}\circ m_{B/I},
\]
hence $B/I$ is a color bialgebra, called \textit{quotient color bialgebra}. $\\$

Given $(C,\Delta,\epsilon)\in\text{Comon(Vec}_{G})$ and $(A,m,u)\in\text{Mon(Vec}_{G})$, we have the convolution product of two morphisms $f,g:C\rightarrow A$ in Vec$_{G}$ given by $f*g:=m\circ(f\otimes g)\circ\Delta$. A color Hopf algebra is a color bialgebra with a morphism $S:B\rightarrow B$ in Vec$_{G}$ (antipode) such that $S*\text{Id}_{B}=u\circ\epsilon=\text{Id}_{B}*S$, thus it is a linear map preserving gradings such that $b_{1}S(b_{2})=\epsilon(b)1_{B}=S(b_{1})b_{2}$ for every $b\in B$. A morphism of color Hopf algebras is simply a morphism of color bialgebras since the compatibility with antipodes is automatically guaranteed (see e.g. \cite[Propositon 1.16]{l}). Given a color Hopf algebra $H$, a \textit{color Hopf subalgebra} $H'\subseteq H$ will be simply a color sub-bialgebra such that $S_{H}(H')\subseteq H'$. Furthermore, given a graded bi-ideal $I$ such that $S_{H}(I)\subseteq I$, there is a unique linear map $S_{H/I}:H/I\to H/I$ such that $S_{H/I}\circ\pi=\pi\circ S_{H}$ which preserves gradings since $S_{H}$ and $\pi$ do. This is clearly the antipode of $H/I$ (which is a color bialgebra), in fact as usual
\[
\begin{split}
m_{H/I}\circ(S_{H/I}\otimes\mathrm{Id}_{H/I})\circ\Delta_{H/I}\circ\pi&=m_{H/I}\circ(S_{H/I}\otimes\mathrm{Id}_{H/I})\circ(\pi\otimes\pi)\circ\Delta_{H}\\&=m_{H/I}\circ(\pi\otimes\pi)\circ(S_{H}\otimes\mathrm{Id}_{H})\circ\Delta_{H}\\&=\pi\circ m_{H}\circ(S_{H}\otimes\mathrm{Id}_{H})\circ\Delta_{H}=\pi\circ u_{H}\circ\epsilon_{H}\\&=u_{H/I}\circ\epsilon_{H/I}\circ\pi
\end{split}
\]
and from the surjectivity of $\pi$ we obtain $m_{H/I}\circ(S_{H/I}\otimes\mathrm{Id}_{H/I})\circ\Delta_{H/I}=u_{H/I}\circ\epsilon_{H/I}$. Analogously for the other equality, so $H/I$ is a color Hopf algebra, called \textit{quotient color Hopf algebra}. Observe that the properties of the antipode $S$ of a color Hopf algebra $H$ on elements $x,y\in H$ are: 
\[
S(xy)= \phi(|x|,|y|)S(y)S(x),\ S(1_{B})=1_{B}\ \text{ and }\ \Delta(S(x))=\phi(|x_{1}|,|x_{2}|)S(x_{2})\otimes S(x_{1}),\ \epsilon(S(x))=\epsilon(x).
\]
If $H$ is commutative then $S(xy)=S(x)S(y)$ and $S^{2}=\mathrm{Id}_{H}$ and if $H$ is cocommutative then $\Delta(S(x))=S(x_{1})\otimes S(x_{2})$ and $S^{2}=\mathrm{Id}_{H}$. $\\$

Clearly the category of Vec$_{\Bbbk}$ is exactly Vec$_{G}$ with $G=\{1_{G}\}$ the trivial group. Hence, motivated by the fact that Hopf$_{\Bbbk,\mathrm{coc}}$ is a semi-abelian category (\cite[Theorem 2.10]{article1}), our question is now to establish whether the category $\Hc$ is semi-abelian.

\section{Limits, Colimits and Protomodularity of $\Hc$}
In this section we show that $\Hc$ is pointed, finitely complete, cocomplete and protomodular. Clearly $\Bbbk$ with trivial grading is in $\Hc$ and it is a zero object of the category. In fact, given $H$ in $\Hc$, we have that $\epsilon$ is the unique morphism of coalgebras from $H$ to $\Bbbk$ and it is also of algebras and preserving gradings. Similarly $u$ is the unique morphism of algebras from $\Bbbk$ to $H$, also of coalgebras and preserving gradings. Hence $\Bbbk$ is a terminal and initial object in $\Hc$, so a zero object and $\Hc$ is pointed. Note that this is true also for Hopf(Vec$_{G}$) and Bimon(Vec$_{G}$) while, with the same reasoning, $\Bbbk$ is initial in Mon(Vec$_{G}$) and terminal in Comon(Vec$_{G}$). $\\$

Now we show the finite completeness of $\Hc$, by constructing equalizers and binary products and by using \cite[Proposition 2.8.2]{book1}. Note that these limits have the same form, as vector spaces, of those of Hopf$_{\Bbbk,\mathrm{coc}}$, given for instance in \cite{article11} (see also \cite{article3}). The constructions given for Hopf$_{\Bbbk,\mathrm{coc}}$ fit with this more general context and the naturality of the braiding or the fact that the category is symmetric is often required to check what appears immediate in the Hopf$_{\Bbbk,\mathrm{coc}}$ case. Since we have not seen these computations in literature for $\Hc$, we give the explicit constructions of these limits, also because they will be used in the following. 

\begin{oss}\label{o1}
Recall that, given a color Hopf algebra $A$ and a graded subspace $V$ of $A$, then $V$ is a color Hopf subalgebra of $A$ if it contains $1_{A}$ and it is closed under $m_{A}$, $\Delta_{A}$ and $S_{A}$. Observe that if $A$ is (co)commutative, clearly also $V$ is (co)commutative. 
\end{oss}

\subsection{Equalizers} Let $f,g:A\rightarrow B$ in $\Hc$, we can consider $$K=\{x\in A\ |\ (\text{Id}_{A}\otimes f)\Delta(x)=(\text{Id}_{A}\otimes g)\Delta(x)\}\subseteq A.$$ Observe that, as vector space, $K=\mathrm{ker}((\text{Id}_{A}\otimes f-\text{Id}_{A}\otimes g)\circ\Delta)$ and $(\text{Id}_{A}\otimes f-\text{Id}_{A}\otimes g)\circ\Delta$ is in $\mathrm{Vec}_{G}$ since $\Delta$, $\text{Id}_{A}\otimes f$ and $\text{Id}_{A}\otimes g$ are in Vec$_{G}$. 
\begin{invisible}
But then if $x\in K\subseteq A$ we have that 
\[
0=(\text{Id}_{A}\otimes f-\text{Id}_{A}\otimes g)\Delta(x)=\sum_{g\in G}{(\text{Id}_{A}\otimes f-\text{Id}_{A}\otimes g)\Delta(x_{g})}\in\bigoplus_{g\in G}{(A\otimes B)_{g}}
\]
and then, since the sum is direct, we must have $(\text{Id}_{A}\otimes f-\text{Id}_{A}\otimes g)\Delta(x_{g})=0$, i.e. $x_{g}\in K$. 
\end{invisible}
Thus, by Remark \ref{Im and qu}, $K$ is a graded subspace of $A$, i.e. $K=\bigoplus_{g\in G}{K_{g}}$ with $K_{g}=K\cap A_{g}$. By Remark \ref{o1} we have that if it is closed under the operations of $A$ it will be automatically in $\Hc$. Now clearly $1_{A}\in K$ and, given $x,y\in K$, then $xy\in K$ since $(\mathrm{Id}_{A}\otimes f)\circ\Delta$ and $(\mathrm{Id}_{A}\otimes g)\circ\Delta$ are morphisms of graded algebras. Indeed, given $x,y\in K$, we have that
\[
(\text{Id}_{A}\otimes f)\Delta(xy)=(\text{Id}_{A}\otimes f)\Delta(x)\cdot(\text{Id}_{A}\otimes f)\Delta(y)=(\text{Id}_{A}\otimes g)\Delta(x)\cdot(\text{Id}_{A}\otimes g)\Delta(y)=(\text{Id}_{A}\otimes g)\Delta(xy), 
\]
denoting by $\cdot$ the multiplication $m_{A\otimes B}$, hence $K$ is closed under $m_{A}$. Furthermore, since by cocommutativity of $A$ we have that $x_{1}\otimes x_{2}=\phi(|x_{1}|,|x_{2}|)x_{2}\otimes x_{1}$ for every $x\in A$ (and then for every $x\in K$), if $\Delta(x)\in A\otimes K$ then we obtain that
\[
K\otimes A\ni c_{A,K}(\phi(|x_{1}|,|x_{2}|)x_{2}\otimes x_{1})=\phi(|x_{1}|,|x_{2}|)\phi(|x_{2}|,|x_{1}|)x_{1}\otimes x_{2}=x_{1}\otimes x_{2}, 
\]
since $\phi$ is a commutation factor, thus we only have to show $\Delta(K)\subseteq A\otimes K$. But now we have that $K=\mathrm{ker}((\mathrm{Id}_{A}\otimes(f-g))\circ\Delta)$, thus 
\[
A\otimes K=\mathrm{ker}(\mathrm{Id}_{A}\otimes(\mathrm{Id}_{A}\otimes(f-g))\circ\Delta), 
\]
hence we desire to show that $x\in K$ implies that $(\mathrm{Id}_{A}\otimes(\mathrm{Id}_{A}\otimes(f-g))\Delta)\Delta(x)=0$. This is equivalent to show that $(\mathrm{Id}_{A}\otimes\mathrm{Id}_{A}\otimes f)(\mathrm{Id}_{A}\otimes\Delta)\Delta(x)=(\mathrm{Id}_{A}\otimes\mathrm{Id}_{A}\otimes g)(\mathrm{Id}_{A}\otimes\Delta)\Delta(x)$, i.e. $x_{1}\otimes x_{2_{1}}\otimes f(x_{2_{2}})=x_{1}\otimes x_{2_{1}}\otimes g(x_{2_{2}})$. But $x_{1}\otimes x_{2_{1}}\otimes f(x_{2_{2}})=x_{1_{1}}\otimes x_{1_{2}}\otimes f(x_{2})$ and $x_{1}\otimes x_{2_{1}}\otimes g(x_{2_{2}})=x_{1_{1}}\otimes x_{1_{2}}\otimes g(x_{2})$ by coassociativity and so we have to prove $(\Delta\otimes\mathrm{Id}_{B})(\mathrm{Id}_{A}\otimes f)\Delta(x)=(\Delta\otimes\mathrm{Id}_{B})(\mathrm{Id}_{A}\otimes g)\Delta(x)$ and this is true because $x\in K$. Hence $K$ is closed under $\Delta_{A}$. Furthermore, since $A$ is cocommutative we have $\Delta\circ S_{A}=(S_{A}\otimes S_{A})\circ\Delta$ and then, given $x\in K$, we have
\[
(\mathrm{Id}_{A}\otimes f)\Delta(S_{A}(x))=(\mathrm{Id}_{A}\otimes f)(S_{A}\otimes S_{A})\Delta(x)=(S_{A}\otimes S_{B})(\mathrm{Id}_{A}\otimes f)\Delta(x)=(S_{A}\otimes S_{B})(\mathrm{Id}_{A}\otimes g)\Delta(x)
\]
which is exactly $(\mathrm{Id}_{A}\otimes g)\Delta(S_{A}(x))$.
Thus we have that $K$ is in $\Hc$. Now, since the inclusion $i:K\rightarrow A$ is in Vec$_{G}$ and it is a morphism of algebras and coalgebras, we obtain that $(K,i)$ is the equalizer in $\Hc$ of the pair $(f,g)$. In fact, with $x\in K$, from $x_{1}\otimes f(x_{2})=x_{1}\otimes g(x_{2})$ we immediately obtain that $f(x)=g(x)$ and if $h:C\to A$ in $\Hc$ is such that $f\circ h=g\circ h$, then the image of $h$ is in $K$. Indeed, since $h$ is a morphism of coalgebras, we obtain 
\[
\begin{split}
(\mathrm{Id}_{A}\otimes f)\circ\Delta_{A}\circ h&=(\mathrm{Id}_{A}\otimes f)\circ(h\otimes h)\circ\Delta_{C}=(\mathrm{Id}_{A}\otimes g)\circ(h\otimes h)\circ\Delta_{C}=(\mathrm{Id}_{A}\otimes g)\circ\Delta_{A}\circ h. 
\end{split}
\]
We denote the equalizer of the pair $(f,g)$ in $\Hc$ by (\text{Eq}($f,g$), $i$).

\subsection{Binary Products} If we take $A,B$ in $\Hc$ we can consider $(A\otimes B,\pi_{A},\pi_{B})$ where 
\[
\pi_{A}:=r_{A}\circ(\text{Id}_{A}\otimes\epsilon_{B})\ \text{ and }\ \pi_{B}:=l_{B}\circ(\epsilon_{A}\otimes\text{Id}_{B}). 
\]
In particular $(A\otimes B,m,u,\Delta,\epsilon,S)$ is a cocommutative color Hopf algebra since Hopf(Vec$_{G}$) and Comon$_{\mathrm{coc}}$(Vec$_{G}$) have a monoidal structure with Vec$_{G}$ symmetric and we recall that $m=(m_{A}\otimes m_{B})\circ(\text{Id}_{A}\otimes c_{B,A}\otimes\text{Id}_{B})$, $u=(u_{A}\otimes u_{B})\circ r_{\Bbbk}^{-1}$, $\Delta=(\text{Id}_{A}\otimes c_{A,B}\otimes\text{Id}_{B})\circ(\Delta_{A}\otimes\Delta_{B})$, $\epsilon=r_{\Bbbk}\circ(\epsilon_{A}\otimes\epsilon_{B})$ and $S=S_{A}\otimes S_{B}$. Furthermore $\pi_{A}$ and $\pi_{B}$ are algebra maps and coalgebra maps and they preserve gradings since this is true for $r_{A}$, $l_{B}$ and $\epsilon_{A}$, $\epsilon_{B}$ and then they are morphisms in $\Hc$. We only have to prove that, for every $H$ in $\Hc$, we have a bijection between the set of morphisms in $\Hc$ from $H$ to $A\otimes B$ and the cartesian product of the set of morphisms from $H$ to $A$ and that of morphisms from $H$ to $B$ in $\Hc$. Given a map $f:H\rightarrow A\otimes B$ we can consider the pair $(\pi_{A}\circ f,\pi_{B}\circ f)$ and given a pair $(g,h)$, with $g:H\rightarrow A$ and $h:H\rightarrow B$, we can consider the morphism $(g\otimes h)\circ\Delta_{H}$; this map will be the diagonal morphism of the pair $(g,h)$, usually denoted by $\langle g,h\rangle$. It is in $\Hc$ since $\Delta_{H}$ is a morphism of coalgebras with $H$ cocommutative (and only in this case). Hence it is clear that this construction is specific for the cocommutative case. Clearly, given $g:H\rightarrow A$ and $h:H\rightarrow B$, we have
\[
\pi_{A}\circ(g\otimes h)\circ\Delta_{H}=r_{A}\circ(\mathrm{Id}_{A}\otimes\epsilon_{B})\circ(g\otimes h)\circ\Delta_{H}=r_{A}\circ(g\otimes\mathrm{Id}_{\Bbbk})\circ(\mathrm{Id}_{H}\otimes\epsilon_{H})\circ\Delta_{H}=g\circ r_{H}\circ(\mathrm{Id}_{H}\otimes\epsilon_{H})\circ\Delta_{H}=g
\]
and, analogously, $\pi_{B}\circ(g\otimes h)\circ\Delta_{H}=h$. On the other hand, given $f:H\rightarrow A\otimes B$, we have that
\[
((\pi_{A}\circ f)\otimes(\pi_{B}\circ f))\circ\Delta_{H}=(\pi_{A}\otimes\pi_{B})\circ(f\otimes f)\circ\Delta_{H}=(\pi_{A}\otimes\pi_{B})\circ\Delta_{A\otimes B}\circ f
\]
using $f$ of coalgebras. Now we show that $(\pi_{A}\otimes\pi_{B})\circ\Delta_{A\otimes B}=\text{Id}_{A\otimes B}$. Indeed, we have
\[
\begin{split}
(\pi_{A}\otimes\pi_{B})\circ\Delta_{A\otimes B}&=(\pi_{A}\otimes\pi_{B})\circ(\mathrm{Id}_{A}\otimes c_{A,B}\otimes\mathrm{Id}_{B})\circ(\Delta_{A}\otimes\Delta_{B})\\&=(r_{A}\otimes l_{B})\circ(\mathrm{Id}_{A}\otimes\epsilon_{B}\otimes\epsilon_{A}\otimes\mathrm{Id}_{B})\circ(\mathrm{Id}_{A}\otimes c_{A,B}\otimes\mathrm{Id}_{B})\circ(\Delta_{A}\otimes\Delta_{B})\\&=(r_{A}\otimes l_{B})\circ(\mathrm{Id}_{A}\otimes c_{\Bbbk,\Bbbk}\otimes\mathrm{Id}_{B})\circ(\mathrm{Id}_{A}\otimes\epsilon_{A}\otimes\epsilon_{B}\otimes\mathrm{Id}_{B})\circ(\Delta_{A}\otimes\Delta_{B})\\&=(r_{A}\otimes l_{B})\circ(\mathrm{Id}_{A}\otimes\epsilon_{A}\otimes\epsilon_{B}\otimes\mathrm{Id}_{B})\circ(\Delta_{A}\otimes\Delta_{B})=\mathrm{Id}_{A}\otimes\mathrm{Id}_{B}=\mathrm{Id}_{A\otimes B}
\end{split}
\]
where we only use the naturality of $c$ and the fact that $c_{\Bbbk,\Bbbk}=\mathrm{Id}_{\Bbbk,\Bbbk}$. Hence $(A\otimes B,\pi_{A},\pi_{B})$ is the binary product of $A$ and $B$ in $\Hc$ and we denote the object by $A\times B$. $\\$

We have obtained that $\Hc$ is finitely complete and now we show the cocompleteness. To this aim we prove more generally the cocompleteness of Hopf(Vec$_{G}$) by constructing coequalizers and arbitrary coproducts and that the colimits are the same in the cocommutative case. As for limits, also colimits have the same form as vector spaces of those in Hopf$_{\Bbbk}$, which are reported for instance in \cite{article4}. 

\begin{oss}
The fact that colimits are the same in the cocommutative case should not surprise us. In fact we recall that, given a symmetric monoidal category $\Mm$, the forgetful functor $U_{a}:\text{Mon}(\Mm)\to\Mm$ creates limits and the forgetful functor $U_{c}:\text{Comon}(\Mm)\to\Mm$ creates colimits and then Mon$(\Mm)$ is closed under limits in $\Mm$ as Comon$(\Mm)$ is closed under colimits in $\Mm$ (see e.g. \cite[Fact 10]{article9},\cite[Fact 4]{article10}). Hence also Mon$_{\mathrm{c}}(\Mm)$ is closed under limits in Mon$(\Mm)$ and Comon$_{\mathrm{coc}}(\Mm)$ is closed under colimits in Comon$(\Mm)$. Furthermore $\text{Bimon}_{\mathrm{coc}}(\Mm)=\text{Comon(Comon(Mon}(\Mm)))$, so Bimon$_{\mathrm{coc}}(\Mm)$ is closed under colimits in Bimon($\Mm$). We will see that colimits in Hopf(Vec$_{G}$) are the same of those in Bimon(Vec$_{G}$) and then clearly $\Hc$ is closed under colimits in Hopf(Vec$_{G}$). Observe also that Bimon(Vec$_{G}$)=Comon(Mon(Vec$_{G}$)) is closed under colimits in Mon(Vec$_{G}$) and then colimits in Hopf(Vec$_{G}$) will derive from those of Mon(Vec$_{G}$). However we show all the details in the sequel.
\end{oss}

\begin{oss}\label{oso}
Recall that, given a color Hopf algebra $H$ and a graded bi-ideal $I$ such that $S(I)\subseteq I$, then $H/I$ is a color Hopf algebra. Observe also that if $H$ is (co)commutative then also $H/I$ is (co)commutative. Indeed, for instance, if $H$ is cocommutative, by naturality of $c$, we have that 
\[
c_{H/I,H/I}\circ\Delta_{H/I}\circ\pi=c_{H/I,H/I}\circ(\pi\otimes\pi)\circ\Delta_{H}=(\pi\otimes\pi)\circ c_{H,H}\circ\Delta_{H}=(\pi\otimes\pi)\circ\Delta_{H}=\Delta_{H/I}\circ\pi
\]
and then, since $\pi:H\rightarrow H/I$ is surjective, we obtain $c_{H/I,H/I}\circ\Delta_{H/I}=\Delta_{H/I}$.
\end{oss}

\subsection{Coequalizers} Let $f,g:A\rightarrow B$ in Hopf(Vec$_{G}$), we can consider $I=B((f-g)(A))B$, the two-sided ideal of $B$ generated by the graded subspace of $B$ given by $(f-g)(A):=\{f(a)-g(a)\ |\ a\in A\}$, which is graded by Remark \ref{Im and qu}, since $I=m_{B}(m_{B}\otimes\mathrm{Id}_{B})(B\otimes(f-g)(A)\otimes B)$. Thus, in order to prove that $B/I$ is a color Hopf algebra, we only have to check that $I$ is a two-sided coideal and that $S(I)\subseteq I$, by Remark \ref{oso}. Given $a\in A$, since $f$ and $g$ are morphisms of coalgebras, we obtain
\[
\begin{split}
\Delta(f(a)-g(a))&=\Delta(f(a))-\Delta(g(a))=f(a_{1})\otimes f(a_{2})-g(a_{1})\otimes g(a_{2})\\&=f(a_{1})\otimes f(a_{2})-g(a_{1})\otimes f(a_{2})+g(a_{1})\otimes f(a_{2})-g(a_{1})\otimes g(a_{2})\\&=(f(a_{1})-g(a_{1}))\otimes f(a_{2})+g(a_{1})\otimes(f(a_{2})-g(a_{2})).
\end{split}
\]
Hence we have that $\Delta((f-g)(A))\subseteq(f-g)(A)\otimes B+B\otimes(f-g)(A)$ and from this, using that $\Delta$ is a morphism of algebras and that $B$ is a color bialgebra, we have that
\[
\begin{split}
    \Delta_{B}(B((f-g)(A)))&=\Delta_{B}(m_{B}(B\otimes(f-g)(A)))\\&=(m_{B}\otimes m_{B})(\text{Id}_{B}\otimes c_{B,B}\otimes\text{Id}_{B})(\Delta_{B}\otimes\Delta_{B})(B\otimes(f-g)(A))\\&\subseteq(m_{B}\otimes m_{B})(\text{Id}_{B}\otimes c_{B,B}\otimes\text{Id}_{B})(B\otimes B\otimes(f-g)(A)\otimes B)\\&+(m_{B}\otimes m_{B})(\text{Id}_{B}\otimes c_{B,B}\otimes\text{Id}_{B})(B\otimes B\otimes B\otimes(f-g)(A))\\&\subseteq B((f-g)(A))\otimes B+B\otimes B((f-g)(A))
\end{split}
\]
and then
\begin{invisible}
\[
\begin{split}
\Delta(B((f-g)(A))B)&=\Delta_{B}(m_{B}(B((f-g)(A))\otimes B))\\&=(m_{B}\otimes m_{B})(\mathrm{Id}_{B}\otimes c_{B,B}\otimes\mathrm{Id}_{B})(\Delta_{B}\otimes\Delta_{B})(B((f-g)(A))\otimes B)\\&\subseteq(m_{B}\otimes m_{B})(\mathrm{Id}_{B}\otimes c_{B,B}\otimes\mathrm{Id}_{B})(B((f-g)(A))\otimes B\otimes B\otimes B)\\&+(m_{B}\otimes m_{B})(\mathrm{Id}_{B}\otimes c_{B,B}\otimes\mathrm{Id}_{B})(B\otimes B((f-g)(A))\otimes B\otimes B)\\&\subseteq B((f-g)(A))B\otimes B+B\otimes B((f-g)(A))B 
\end{split}
\]
\end{invisible}
$\Delta(I)\subseteq I\otimes B+B\otimes I$. Furthermore $\epsilon(I)=0$ since $\epsilon$ is a morphism of algebras and thus $I$ is a two-sided coideal. Furthermore we have that 
\[
\begin{split}
S_{B}(I)&=S_{B}m_{B}(m_{B}\otimes\mathrm{Id}_{B})(B\otimes(f-g)(A)\otimes B)\\&=m_{B}c_{B,B}(S_{B}\otimes S_{B})(m_{B}\otimes\mathrm{Id}_{B})(B\otimes(f-g)(A)\otimes B)\\&=m_{B}c_{B,B}(m_{B}\otimes\mathrm{Id}_{B})(c_{B,B}\otimes\mathrm{Id}_{B})(S_{B}\otimes S_{B}\otimes S_{B})(B\otimes(f-g)(A)\otimes B)\\&\subseteq m_{B}c_{B,B}(m_{B}\otimes\mathrm{Id}_{B})(c_{B,B}\otimes\mathrm{Id}_{B})(B\otimes (f-g)(S_{A}(A))\otimes B)\\&\subseteq m_{B}c_{B,B}(m_{B}\otimes\mathrm{Id}_{B})(c_{B,B}\otimes\mathrm{Id}_{B})(B\otimes(f-g)(A)\otimes B)\subseteq I.
\end{split}
\]
Hence $B/I$ is a color Hopf algebra and $\pi:B\to B/I$ is in Hopf(Vec$_{G}$) such that clearly $\pi\circ f=\pi\circ g$. Now, given $h:B\rightarrow H$ in Hopf(Vec$_{G}$) such that $h\circ f=h\circ g$, we have that $I\subseteq\mathrm{ker}(h)$ and then there exists a unique morphism of coalgebras $h':B/I\rightarrow H$ such that $h'\circ\pi=h$ which is also of algebras and preserving grading since this is true for $\pi$ and $h$, hence it is the unique morphism in Hopf(Vec$_{G}$) such that $h'\circ\pi=h$. Thus $(B/I,\pi)$ is the coequalizer in Hopf(Vec$_{G}$) of the pair $(f,g)$, which we denote by $(\text{Coeq}(f,g),\pi)$. Observe here that, clearly, this is also the coequalizer for $f,g$ in $\Hc$ since if $B$ is cocommutative also $B/I$ is cocommutative, as said in Remark \ref{oso}.
\begin{invisible}
In fact, by naturality of $c$, we have that 
\[
c_{_{B/I,B/I}}\circ\Delta_{B/I}\circ\pi=c_{_{B/I,B/I}}\circ(\pi\otimes\pi)\circ\Delta_{B}=(\pi\otimes\pi)\circ c_{B,B}\circ\Delta_{B}=(\pi\otimes\pi)\circ\Delta_{B}=\Delta_{B/I}\circ\pi
\]
and then, since $\pi:B\rightarrow B/I$ is surjective, $c_{_{B/I,B/I}}\circ\Delta_{B/I}=\Delta_{B/I}$.
\end{invisible}

\begin{oss}\label{tensor}
We know that, given $V=\bigoplus_{g\in G}{V_{g}}$ in Vec$_{G}$ and $T^{p}(V)=V\otimes\cdot\cdot\cdot\otimes V$ $p$-times with $p\in\mathbb{N}$ (where $T^{0}(V)=\Bbbk$), we have that $T^{p}(V)$ is graded with $T^{p}(V)_{g}=\bigoplus_{h_{1}\cdot\cdot\cdot h_{p}=g}{V_{h_{1}}\otimes\cdot\cdot\cdot\otimes V_{h_{p}}}$ for every $g\in G$ and then we have 
\[
T(V)=\bigoplus_{p\in\mathbb{N}}{T^{p}(V)}=\bigoplus_{p\in\mathbb{N}}\bigoplus_{g\in G}{T^{p}(V)_{g}}=\bigoplus_{g\in G}\bigoplus_{p\in\mathbb{N}}{T^{p}(V)_{g}}=\bigoplus_{g\in G}{T(V)_{g}},
\]
so $T(V)$ is graded as vector space. But $T(V)$ is also an algebra and it is graded since $1_{\Bbbk}\in\Bbbk_{1_{G}}=T^{0}(V)_{1_{G}}\subseteq T(V)_{1_{G}}$ and given $x\in T(V)_{g}$ and $y\in T(V)_{h}$ with $g,h\in G$ we have that $xy\in T(V)_{gh}$. In fact, given an element $x=x_{h_{1}}\otimes\cdot\cdot\cdot\otimes x_{h_{p}}\in T^{p}(V)_{g}$ with $h_{1}\cdot\cdot\cdot h_{p}=g$ and an element $y=y_{k_{1}}\otimes\cdot\cdot\cdot\otimes y_{k_{s}}\in T^{s}(V)_{h}$ with $k_{1}\cdot\cdot\cdot k_{s}=h$, we have that $xy=x_{h_{1}}\otimes\cdot\cdot\cdot\otimes x_{h_{p}}\otimes y_{k_{1}}\otimes\cdot\cdot\cdot\otimes y_{k_{s}}\in T^{p+s}(V)_{gh}$ since $h_{1}\cdot\cdot\cdot h_{p}\cdot k_{1}\cdot\cdot\cdot k_{s}=gh$. Note also that the canonical inclusion $i:V\to T(V)$ preserves the grading since $i(V_{g})=V_{g}=T^{1}(V)_{g}\subseteq T(V)_{g}$.
\end{oss}

\subsection{Coproducts} Let $\{H_{l}\}_{l\in I}$ be a family of color Hopf algebras, we can take $T(\bigoplus_{l\in I}{H_{l}})/L$ where $L$ is the two-sided ideal in $T(\bigoplus_{l\in I}{H_{l}})$ generated by the linear span of the set
\[
J:=\{i(j_{l}(x_{l}y_{l}))-i(j_{l}(x_{l}))i(j_{l}(y_{l})), 1_{T(\bigoplus{H_{l}})}-i(j_{l}(1_{H_{l}}))\ |\ x_{l},y_{l}\in H_{l},\ l\in I\},
\]
where $j_{t}:H_{t}\to\bigoplus_{l\in I}{H_{l}}$ sends $v$ to the element with $v$ as $t$-component, the only one not trivial and $i:\bigoplus_{l\in I}{H_{l}}\to T(\bigoplus_{l\in I}{H_{l}})$ is the canonical inclusion.  Now, since $H_{l}=\bigoplus_{g\in G}{H_{l,g}}$ for every $l\in I$, we have that $\bigoplus_{l\in I}{H_{l}}=\bigoplus_{l\in I}{\bigoplus_{g\in G}{H_{l,g}}}=\bigoplus_{g\in G}{\bigoplus_{l\in I}{H_{l,g}}}$, then $\bigoplus_{l\in I}{H_{l}}$ is in Vec$_{G}$ and so $T(\bigoplus_{l\in I}{H_{l}})$ is a graded algebra by Remark \ref{tensor}. But now, clearly, $L$ is graded since it is generated by homogeneous elements; indeed $i$, $j_{l}$,  
$m_{l}$ and $m_{T(\bigoplus{H_{l}})}$ are in Vec$_{G}$, for every $l\in I$. Thus also $T(\bigoplus_{l\in I}{H_{l}})/L$ is a graded algebra. For all $l\in I$ define $q_{l}:=\nu\circ i\circ j_{l}$, where $\nu:T(\bigoplus_{l\in I}{H_{l}})\to T(\bigoplus_{l\in I}{H_{l}})/L$ is the canonical projection. Then $q_{l}$ is a morphism of algebras for every $l\in I$ by the relations of $J$ and since $\nu$ is an algebra map and it preserves gradings since this is true for the three maps. Now, given a graded algebra $C$ and graded algebra morphisms $g_{l}:H_{l}\to C$ for $l\in I$, there exists a unique linear map $k:\bigoplus_{i\in I}{H_{i}}\to C$ such that $k\circ j_{l}=g_{l}$ for every $l\in I$ by the universal property of the coproduct of vector spaces and $k$ also preserves gradings since $j_{l}$ and $g_{l}$ do (it is the universal property of the coproduct in Vec$_{G}$). By the universal property of the tensor algebra, there is a unique algebra map $s:T(\bigoplus_{l\in I}{H_{l}})\to C$ such that $s\circ i=k$ and $s$ also preserves gradings since $i$ and $k$ do. Finally, we have 
\[
s(i(j_{l}(x_{l}y_{l})))=k(j_{l}(x_{l}y_{l}))=g_{l}(x_{l}y_{l})=g_{l}(x_{l})g_{l}(y_{l})=s(i(j_{l}(x_{l})))s(i(j_{l}(y_{l}))) 
\]
and $s(i(j_{l}(1_{H_{l}})))=k(j_{l}(1_{H_{l}}))=g_{l}(1_{H_{l}})=1_{C}=s(1_{T(\bigoplus{H_{l}})})$ since $g_{l}$ and $s$ are algebra maps, for every $l\in I$. So $L\subseteq\mathrm{ker}(s)$ and then there exists a unique algebra map $p:T(\bigoplus_{l\in I}{H_{l}})/L\to C$ such that $p\circ\nu=s$ which preserves gradings since $s$ and $\nu$ do. We have that $p\circ q_{l}=g_{l}$ and this morphism $p$ is the unique in Mon(Vec$_{G}$) such that $p\circ q_{l}=g_{l}$ for every $l\in I$. Indeed, if there is a morphism $\tilde{p}:T(\bigoplus_{l\in I}{H_{l}})/L\to C$ such that $\tilde{p}\circ q_{l}=g_{l}$ for every $l\in I$, then from $(\tilde{p}\circ\nu\circ i)\circ j_{l}=g_{l}$ we obtain $(\tilde{p}\circ\nu)\circ i=k$, so $\tilde{p}\circ\nu=s$ and hence $p=\tilde{p}$. We have shown that $(T(\bigoplus_{l\in I}{H_{l}})/L,(q_{l})_{l\in I})$ is the coproduct of the family $\{H_{l}\}_{l\in I}$ in Mon(Vec$_{G}$), and we denote $T(\bigoplus_{l\in I}{H_{l}})/L$ by $\coprod_{l\in I}{H_{l}}$. \medskip

Now, since $H_{l}$ is a color bialgebra for every $l\in I$, we can show that $\coprod_{l\in I}{H_{l}}$ is a color bialgebra and that it is the coproduct of the family $\{H_{l}\}_{l\in I}$ in Bimon(Vec$_{G}$). The comultiplication and the counit are given by the unique graded algebra maps such that the following diagrams commute
\begin{equation}\label{sp}
\begin{tikzcd}
  H_{l} \arrow[r, "q_{l}"] \arrow["\Delta_{l}"',d]
    & \coprod_{i\in I}{H_{i}} \arrow[d, "\Delta"] \\
  H_{l}\otimes H_{l}\arrow[r,"q_{l}\otimes q_{l}"']
& \coprod_{i\in I}{H_{i}}\otimes\coprod_{i\in I}{H_{i}} 
\end{tikzcd}
\begin{tikzcd}
H_{l}\arrow[rd, "\epsilon_{l}"']\arrow[r,"q_{l}"]
& \coprod_{i\in I}{H_{i}} \arrow[d,"\epsilon"]\\
& \Bbbk
\end{tikzcd}
\end{equation}
by the universal property of the coproduct in Mon(Vec$_{G}$). Thus we already have the compatibility and, if we prove that $\Delta$ is coassociative and counitary we will have that $\Delta$ and $\epsilon$ make $\coprod_{i\in I}{H_{i}}$ a color bialgebra and the two commutative diagrams \eqref{sp} will prove that $q_{l}$ is a coalgebra map for every $l\in I$ and then a color bialgebra map. In order to obtain $(\text{Id}\otimes\Delta)\circ\Delta=(\Delta\otimes\text{Id})\circ\Delta$ it is sufficient to show that $(\text{Id}\otimes\Delta)\circ\Delta\circ q_{l}=(\Delta\otimes\text{Id})\circ\Delta\circ q_{l}$ by using the universal property, since $(\text{Id}\otimes\Delta)\circ\Delta$ and $(\Delta\otimes\text{Id})\circ\Delta$ are both graded algebra maps and, for the same argument, if we show $l\circ(\epsilon\otimes\text{Id})\circ\Delta\circ q_{l}=\text{Id}\circ q_{l}$ we obtain that $l\circ(\epsilon\otimes\text{Id})\circ\Delta=\text{Id}$, where $l:\Bbbk\otimes\coprod_{i\in I}{H_{i}}\rightarrow\coprod_{i\in I}{H_{i}}$ is the canonical isomorphism. So, having in mind the two diagrams in \eqref{sp} and the fact that $H_{l}$ is a coalgebra for $l\in I$, we obtain that
\[
\begin{split}
(\text{Id}\otimes\Delta)\circ\Delta\circ q_{l}=&(\text{Id}\otimes\Delta)\circ(q_{l}\otimes q_{l})\circ\Delta_{l}=(q_{l}\otimes q_{l}\otimes q_{l})\circ(\text{Id}\otimes\Delta_{l})\circ\Delta_{l}\\=&(q_{l}\otimes q_{l}\otimes q_{l})\circ(\Delta_{l}\otimes\text{Id})\circ\Delta_{l}=(\Delta\otimes\text{Id})\circ(q_{l}\otimes q_{l})\circ\Delta_{l}\\=&(\Delta\otimes\text{Id})\circ\Delta\circ q_{l}
\end{split}
\]
and 
\[
\begin{split}
l\circ(\epsilon\otimes\text{Id})\circ\Delta\circ q_{l}=&l\circ(\epsilon\otimes\text{Id})\circ(q_{l}\otimes q_{l})\circ\Delta_{l}=l\circ(\epsilon_{l}\otimes q_{l})\circ\Delta_{l}\\=&l\circ(\text{Id}\otimes q_{l})\circ(\epsilon_{l}\otimes\text{Id})\circ\Delta_{l}=q_{l}\circ l_{H_{l}}\circ(\epsilon_{l}\otimes\text{Id})\circ\Delta_{l}=\text{Id}\circ q_{l}.
\end{split}  
\]   
Similarly $r\circ(\text{Id}\otimes\epsilon)\circ\Delta=\text{Id}$ where $r:\coprod_{i\in I}{H_{i}}\otimes\Bbbk\rightarrow\coprod_{i\in I}{H_{i}}$ is the canonical isomorphism. Hence $\coprod_{l\in I}{H_{l}}$ is a color bialgebra and $q_{l}$ is a color bialgebra map for every $l\in I$. Now, given a color bialgebra $C$ and color bialgebra maps $g_{l}:H_{l}\rightarrow C$, we have a unique graded algebra map $p:\coprod_{i\in I}{H_{i}}\rightarrow C$ such that $p\circ q_{l}=g_{l}$ for every $l\in I$ by the universal property of the coproduct in Mon(Vec$_{G}$). We show that $p$ is also a coalgebra map in order to obtain that $(\coprod_{l\in I}{H_{l}},(q_{l})_{l\in I})$ is the coproduct in Bimon(Vec$_{G}$) of the family $\{H_{l}\}_{l\in I}$. By the argument used above, it is enough to show that $(p\otimes p)\circ\Delta\circ q_{l}=\Delta_{C}\circ p\circ q_{l}$ and $\epsilon_{C}\circ p\circ q_{l}=\epsilon\circ q_{l}$. So, since $g_{l}$ is a coalgebra map for every $l\in I$, we have that
\[
(p\otimes p)\circ\Delta\circ q_{l}=(p\otimes p)\circ(q_{l}\otimes q_{l})\circ\Delta_{l}=(g_{l}\otimes g_{l})\circ\Delta_{l}=\Delta_{C}\circ g_{l}=\Delta_{C}\circ p\circ q_{l}
\]
and $\epsilon_{C}\circ p\circ q_{l}=\epsilon_{C}\circ g_{l}=\epsilon_{l}=\epsilon\circ q_{l}$. 
\medskip

Now we let $H:=\coprod_{i\in I}{H_{i}}$. Every $H_{l}$ has an antipode $S_{l}:H_{l}\rightarrow H_{l}$ which is a color bialgebra map from $H_{l}$ to $H_{l}^{\mathrm{op,cop}}$ where $x\cdot_{\mathrm{op}}y:=mc_{H_{l},H_{l}}(x\otimes y)=\phi(|x|,|y|)yx$ and $\Delta^{\mathrm{cop}}(x):=c_{H_{l},H_{l}}\Delta(x)=\phi(|x_{1}|,|x_{2}|)x_{2}\otimes x_{1}$, for every $x,y\in H_{l}$. Since $q_{l}$ is a color bialgebra map from $H_{l}^{\mathrm{op,cop}}$ to $H^{\mathrm{op,cop}}$, the universal property of the coproduct in Bimon(Vec$_{G}$) yields a unique color bialgebra map $S:H\rightarrow H^{\text{op,cop}}$ such that the following diagram commutes for all $l\in I$.
\[
\begin{tikzcd}
  H_{l} \arrow[r, "q_{l}"] \arrow["S_{l}"',d]
    & H \arrow[d, "S"] \\
  H_{l}^{\text{op,cop}} \arrow[r,"q_{l}"']
& H^{\text{op,cop}} \end{tikzcd}
\]
If we prove that $S$ is the antipode of $H$, then $H$ is a color Hopf algebra and $q_{l}$ is a morphism of color Hopf algebras for every $l\in I$. Furthermore, given $C$ a color Hopf algebra and $g_{l}:H_{l}\rightarrow C$ a color Hopf algebra map for every $l\in I$, there is a unique color bialgebra map $t:H\rightarrow C$ (a posteriori the unique color Hopf algebra map) such that $t\circ q_{l}=g_{l}$ for every $l\in I$. Hence, in this case, $(H,(q_{l})_{l\in I})$ is the coproduct in Hopf(Vec$_{G}$) of the family of color Hopf algebras $\{H_{l}\}_{l\in I}$. Thus, in order to conclude, we prove that $m\circ(\text{Id}\otimes S)\circ\Delta=m\circ(S\otimes\text{Id})\circ\Delta=u\circ\epsilon$. Since $S:H\rightarrow H^{\text{op,cop}}$ is a color bialgebra map we only need to prove these on the generators of $H$ as a graded algebra. Indeed, let $h,k$ be generators in $H$ for which the relations hold, we obtain
\[
\begin{split}
    m(\mathrm{Id}_{H}\otimes S)\Delta(hk)&=m(\mathrm{Id}_{H}\otimes S)(\phi(|h_{2}|,|k_{1}|)h_{1}k_{1}\otimes h_{2}k_{2})=\phi(|h_{2}|,|k_{1}|)h_{1}k_{1}S(h_{2}k_{2})\\&=\phi(|h_{2}|,|k_{1}|)\phi(|h_{2}|,|k_{2}|)h_{1}k_{1}S(k_{2})S(h_{2})=\phi(|h_{2}|,|k|)h_{1}\epsilon(k)S(h_{2})\\&=h_{1}S(h_{2})\epsilon(k)=\epsilon(h)\epsilon(k)1_{H}=\epsilon(hk)1_{H}=u\epsilon(hk)
\end{split}
\]
and similarly $m(S\otimes\text{Id})\Delta(hk)=u\epsilon(hk)$, so the relations hold for $hk$ and thus for all the elements in $H$. So, having in mind that $H:=T(\bigoplus_{l\in I}{H_{l}})/L$, we only need to prove the relations for the elements $\bar{x}=i(x)+L\in H$ with $x\in\bigoplus_{l\in I}{H_{l}}$ whereas the tensor algebra $T(\bigoplus_{l\in I}{H_{l}})$ is the free algebra on $\bigoplus_{l\in I}{H_{l}}$. Moreover, since elements $x\in\bigoplus_{l\in I}{H_{l}}$ are such that $x_{l}=0$ for every $l\in I$ except for a finite number, by linearity it is enough to show that the relations hold for every $x_{l}\in H_{l}$ with $l\in I$. Using the commutativity of the three diagrams before, the fact that $H_{l}$ is a color Hopf algebra and that $q_{l}$ is an algebra map for $l\in I$, we obtain
\[
\begin{split}
    m\circ(\text{Id}\otimes S)\circ\Delta\circ q_{l}&=m\circ(\text{Id}\otimes S)\circ(q_{l}\otimes q_{l})\circ\Delta_{l}=m\circ(q_{l}\otimes q_{l})\circ(\text{Id}\otimes S_{l})\circ\Delta_{l}\\&=q_{l}\circ m_{l}\circ(\text{Id}\otimes S_{l})\circ\Delta_{l}=q_{l}\circ u_{l}\circ\epsilon_{l}=u\circ\epsilon_{l}=u\circ\epsilon\circ q_{l},
\end{split}
\]
hence $m\circ(\text{Id}\otimes S)\circ\Delta=u\circ\epsilon$. In the same way it can be shown that $m\circ(S\otimes\text{Id})\circ\Delta=u\circ\epsilon$. Thus $S$ is the antipode of $H$ and then $(H,(q_{l})_{l\in I})$ is the coproduct of the family $\{H_{l}\}_{l\in I}$ in Hopf(Vec$_{G}$). It is clear that if we consider $H_{l}$ a cocommutative color Hopf algebra for every $l\in I$ then $(H,(q_{l})_{l\in I})$ will be the coproduct in $\Hc$, since $H$ is cocommutative. In fact, since Vec$_{G}$ is symmetric, $c$ is a braiding for Mon(Vec$_{G}$) and, in particular, $c_{H,H}$ is a morphism of graded algebras, so the same is true for $c_{H,H}\circ\Delta$. Thus from
\[
c_{H,H}\circ\Delta\circ q_{l}=c_{H,H}\circ(q_{l}\otimes q_{l})\circ\Delta_{l}=(q_{l}\otimes q_{l})\circ c_{H_{l},H_{l}}\circ\Delta_{l}=(q_{l}\otimes q_{l})\circ\Delta_{l}=\Delta\circ q_{l}
\]
we obtain that $c_{H,H}\circ\Delta=\Delta$ by universal property of the coproduct. $\\$

Hence we have obtained that $\Hc$ (and also Hopf(Vec$_{G}$)) is cocomplete. Note that, even if only finite cocompleteness is required in the definition of a semi-abelian category, the fact that this category has all small colimits will be used to obtain that it is semi-abelian, through an equivalent characterization. 

\begin{oss}\label{lp}
Note that in \cite[Proposition 4.1.1]{article12} it has been proven that Hopf($\Mm$), $\mathrm{Hopf_{coc}}(\Mm)$ and $\mathrm{Hopf_{c}}(\Mm)$ are always accessible categories for every symmetric monoidal category $\Mm$. Hence we have that $\Hc$ is accessible and then, since we have shown that it is cocomplete, we obtain that it is complete and locally presentable. In fact we know that, as reported in \cite[Corollary 2.47]{book6}, a category is locally presentable if and only if it is accessible and complete if and only if it is accessible and cocomplete. Observe that, while accessibility is always true for the category of Hopf monoids in a symmetric monoidal category, this is not the same for local presentability. As it is said in \cite[Propositions 49,52,53]{article9} this is true when the forgetful functor $U_{a}:\mathrm{Mon}(\Mm)\to\Mm$ is an extremally monadic functor or when the forgetful functor $U_{c}:\mathrm{Comon}(\Mm)\to\Mm$ is an extremally comonadic functor, since in these cases we have that the category Hopf$(\Mm)$ is closed under colimits and limits in Bimon$(\Mm)$, respectively.
\end{oss}

\subsection{Protomodularity} Recall that if $\Mm$ is a category with binary products, i.e. there exists the binary product $A\times B$ for every objects $A$ and $B$ in $\Mm$, and with terminal object $\mathbf{I}$, the monoidal category $(\Mm,\times,\mathbf{I})$ is called \textit{cartesian} and the category of internal groups in $\Mm$, denoted by $\mathrm{Grp}(\Mm)$, has objects which are monoids $(G,m,u)$ in $\Mm$ equipped with a morphism $i:G\to G$ in $\Mm$ (called inversion) such that $m\circ\langle\mathrm{Id}_{G},i\rangle=u\circ t_{G}=m\circ\langle i,\mathrm{Id}_{G}\rangle$, where $t_{G}$ is the unique morphism from $G$ to $\mathbf{I}$ and $\langle\mathrm{Id}_{G},i\rangle$, $\langle i,\mathrm{Id}_{G}\rangle$ are the diagonal morphisms. $\\$

In \cite[Propositon 3.24]{article6} it is proved that, given a cartesian monoidal category $\Mm$ with finite limits, then the category $\mathrm{Grp}(\Mm)$ is protomodular. Note that the same terminal object, equalizers and binary products given before say that $\mathrm{Comon_{coc}}(\mathrm{Vec}_{G})$ is finitely complete. This category is also cartesian since its unit object $\Bbbk$ is the terminal object and the tensor product is the binary product and then we have that $\mathrm{Grp}(\mathrm{Comon_{coc}}(\mathrm{Vec_{G}}))$ is protomodular. Furthermore, as it is said for instance in \cite[Remark 3.3]{article12}, for every symmetric monoidal category $\Mm$ we have that Hopf$_{\mathrm{coc}}(\Mm)$=Grp(Comon$_{\mathrm{coc}}(\Mm)$) and then $\Hc=\mathrm{Grp}(\mathrm{Comon_{coc}}(\mathrm{Vec_{G}}))$. This is easy to show indeed
\[
\begin{split}
\text{Mon}(\text{Comon}_{\text{coc}}(\text{Vec}_{G}))&=\text{Mon(Comon(Comon}(\text{Vec}_{G})))=\text{Comon(Comon(Mon}(\text{Vec}_{G})))=\text{Bimon}_{\text{coc}}(\text{Vec}_{G})
\end{split}
\]
so monoids in $\mathrm{Comon_{coc}}(\mathrm{Vec}_{G})$ are given by cocommutative color bialgebras. Hence an object in $\mathrm{Grp}(\mathrm{Comon_{coc}}(\mathrm{Vec}_{G}))$ is a cocommutative color bialgebra $(B,m,u,\Delta,\epsilon)$ equipped with a morphism $i:B\to B$ in $\mathrm{Comon_{coc}}(\mathrm{Vec}_{G})$ such that $m\circ\langle\text{Id}_{B},i\rangle=u\circ t_{B}=m\circ\langle i,\text{Id}_{B}\rangle$. But we have seen before that $t_{B}=\epsilon$ and $\langle\text{Id}_{B},i\rangle=(\text{Id}_{B}\otimes i)\circ\Delta$, $\langle i,\text{Id}_{B}\rangle=(i\otimes\text{Id}_{B})\circ\Delta$, thus we obtain
\[
m\circ(\mathrm{Id}_{B}\otimes i)\circ\Delta=u\circ\epsilon=m\circ(i\otimes\mathrm{Id}_{B})\circ\Delta,
\]
so $i$ is the antipode of $B$. Hence we have that $\mathrm{Grp}(\mathrm{Comon_{coc}}(\mathrm{Vec}_{G}))$ is exactly the category $\Hc$. Thus we have that $\Hc$ is protomodular. Recall that here the fact that Vec$_{G}$ is symmetric ensures that $\text{Comon}_{\text{coc}}(\text{Vec}_{G})$ is monoidal.

\section{Regularity of $\Hc$}
The most delicate point is the regularity, as in the case of Hopf$_{\Bbbk,\mathrm{coc}}$. Following \cite{article1} the regularity will be shown through the following characterization:

\begin{lem}\label{l1}
Let $\Cc$ be a finitely complete category. Then $\Cc$ is a regular category if and only if $\\$ $\\$
(1) any arrow in $\Cc$ factors as a regular epimorphism followed by a monomorphism; $\\$ $\\$
(2) given any regular epimorphism $f:A\to B$ in $\Cc$ and any object $E$ in $\Cc$, the induced arrow $\mathrm{Id}_{E}\times f:E\times A\to E\times B$ is a regular epimorphism; $\\$ $\\$
(3) regular epimorphisms are stable under pullbacks along split monomorphisms.
\end{lem}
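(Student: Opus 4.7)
The plan is to prove the two directions separately. The forward (``only if'') direction is essentially immediate from the definition of a regular category; all the content lies in the converse.

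For the forward direction, assume $\Cc$ is regular. Then (1) is exactly the regular-epi/mono factorization axiom. For (2), I would observe that $\mathrm{Id}_{E}\times f:E\times A\to E\times B$ is canonically the pullback of $f:A\to B$ along the product projection $\pi_{B}:E\times B\to B$, and by regularity such pullbacks preserve regular epimorphisms. For (3), split monomorphisms are in particular morphisms, so stability of regular epimorphisms under pullback along split monomorphisms is a special case of stability under pullback along arbitrary morphisms.

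For the converse, assuming (1), (2) and (3), I must show that regular epimorphisms in $\Cc$ are stable under pullback along an arbitrary morphism; combined with (1), this is the definition of regularity. Fix a regular epimorphism $f:A\to B$ and an arbitrary morphism $g:X\to B$. The key idea is to realize the pullback $X\times_{B}A\to X$ of $f$ along $g$ as a two-step construction. First, form $\mathrm{Id}_{X}\times f:X\times A\to X\times B$, which is a regular epimorphism by hypothesis (2). Next, consider the graph morphism $\langle\mathrm{Id}_{X},g\rangle:X\to X\times B$, which is a split monomorphism since it is a section of $\pi_{X}:X\times B\to X$. Pull $\mathrm{Id}_{X}\times f$ back along $\langle\mathrm{Id}_{X},g\rangle$; by hypothesis (3), the resulting morphism is again a regular epimorphism.

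It remains to identify this pullback with the pullback of $f$ along $g$. A short pullback-pasting argument (or a direct check of the universal property) shows that the object $X\times_{X\times B}(X\times A)$ is canonically isomorphic to $X\times_{B}A$ and that the morphism to $X$ obtained in this way coincides with the canonical projection $X\times_{B}A\to X$. I would carry this out by decomposing the outer rectangle into two commuting pullback squares: one pasting the graph of $g$ with the product projection, and the other the defining square of $\mathrm{Id}_{X}\times f$. This compatibility check is the only non-formal step, and I expect it to be the main (mild) obstacle; the rest of the argument is a clean diagrammatic chase using only hypotheses (2) and (3).
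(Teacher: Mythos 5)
Your proposal is correct: the forward direction is immediate from the definition, and the converse via the factorization of the pullback of $f$ along $g$ as a pullback of $\mathrm{Id}_{X}\times f$ (a regular epimorphism by (2)) along the graph $\langle\mathrm{Id}_{X},g\rangle$ (a split monomorphism, being a section of $\pi_{X}$), identified by pasting of pullback squares, is exactly the standard argument. The paper itself states this lemma without proof, importing it from \cite{article1}, and the proof given there is the same graph-factorization argument you describe, so there is nothing to add.
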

Since the zero morphism in $\Hc$ between $A$ and $B$ is $u_{B}\circ\epsilon_{A}$, the categorical kernel of $f:A\rightarrow B$ in $\Hc$, i.e. the equalizer of the pair $(f,u_{B}\circ\epsilon_{A})$, is given by $(\text{Hker}(f),j:\mathrm{Hker}(f)\to A)$ with 
\[
\mathrm{Hker}(f)=\{x\in A\ |\ (\mathrm{Id}_{A}\otimes f)\Delta(x)=(\mathrm{Id}_{A}\otimes u_{B}\epsilon_{A})\Delta(x)\}=\{x\in A\ |\ x_{1}\otimes f(x_{2})=x\otimes1_{B}\} 
\]
and $j$ is the canonical inclusion, while the categorical cokernel of $f$ in $\Hc$, i.e. the coequalizer of the pair $(f,u_{B}\circ\epsilon_{A})$, is given by $(B/I,\pi:B\rightarrow B/I$) where $$I=Bf(A)^{+}B$$ and $\pi$ is the canonical projection and where, for any coalgebra $C$, we write $C^{+}=\{x\in C\ |\ \epsilon(x)=0\}$. First note that for any coalgebra morphism $f:C\to D$, $f(C^{+})=f(C)^{+}$. Indeed, $y\in f(C)^{+}$ if and only if $y=f(x)$ for some $x\in C$ and $0=\epsilon_{D}(y)=\epsilon_{D}(f(x))=\epsilon_{C}(x)$ if and only if $y\in f(C^{+})$. Now $I$ is the two-sided ideal of $B$ generated by the set $\{f(a)-u_{B}\epsilon_{A}(a)\ |\ a\in A\}=\{f(a)-\epsilon_{A}(a)1_{B}\ |\ a\in A\}$. But now we have that $(f-u_{B}\circ\epsilon_{A})(A)=f(A^{+})=f(A)^{+}$ since if $x=f(a)-\epsilon_{A}(a)1_{B}$ we have $x=f(a-\epsilon_{A}(a)1_{A})$ with $\epsilon_{A}(a-\epsilon_{A}(a)1_{A})=0$ so that $(f-u_{B}\circ\epsilon_{A})(A)\subseteq f(A^{+})$ and the other inclusion is trivial. Now, given $f:A\rightarrow B$ in $\Hc$, we can consider the categorical cokernel of its categorical kernel in $\Hc$ that is given, as a map, by $p:A\rightarrow A/A(\text{Hker}(f))^{+}A$. Since $j$ is the kernel of $f$ we have that $f\circ j=u_{B}\circ\epsilon_{\mathrm{Hker}(f)}=f\circ u_{A}\circ\epsilon_{\mathrm{Hker}(f)}$, thus, by the universal property of the cokernel, there exists a unique morphism $i$ in $\Hc$ such that $f=i\circ p$. 
\[
\begin{tikzcd}
\mathrm{Hker}(f)\ar[r,hook,"j"]
  &
A \ar[r,"p"] \ar[dr,swap,"f"]
&
\frac{A}{A(\mathrm{Hker}(f))^{+}A} \ar[d,densely dotted,"i"]
\\
& & B
\end{tikzcd}
\]
If we show that $i$ is a monomorphism we obtain the decomposition regular epimorphism-monomorphism of $f$ in $\Hc$. In the case of Hopf$_{\Bbbk,\mathrm{coc}}$, Newman's Theorem \cite[Theorem 4.1]{article8} tells us that for a cocommutative Hopf algebra $H$ there is a bijective correspondence between the set of Hopf subalgebras of $H$ and that of left ideals which are also two-sided coideals of $H$: given a Hopf subalgebra $K$ of $H$ and a left ideal, two-sided coideal $I$ of $H$ the two maps are
\[
K\mapsto HK^{+} \text{ and } I\mapsto H^{\mathrm{co}\frac{H}{I}}:=\{x\in H\ |\ (\mathrm{Id}_{H}\otimes\pi)\Delta(x)=x\otimes\pi(1_{H})\}\ 
\]
which is also $\{x\in H\ |\ (\mathrm{Id}_{H}\otimes\pi)\Delta(x)=(\mathrm{Id}_{H}\otimes\pi u_{H}\epsilon_{H})\Delta(x)\}$, where $\pi:H\to H/I$ is the canonical projection and this result is used in \cite{article1} to deduce that the vector space ker$(f)$ is exactly $A(\text{Hker}(f))^{+}A$ and then that the morphism $i$ of the previous factorization is injective and so a monomorphism. We would like to obtain the same fact in the graded case.

\begin{oss}\label{ms}
Recall that given a graded algebra $A=\bigoplus_{g\in G}{A_{g}}$, i.e. an object in Mon(Vec$_{G}$), we can consider the category $_{A}\mathrm{Vec}_{G}$, whose objects are graded vector spaces $V=\bigoplus_{g\in G}{V_{g}}$ that are also left $A$-modules such that the left $A$-action $\mu:A\otimes V\to V$ is in Vec$_{G}$ and then $\mu(A_{g}\otimes V_{h})\subseteq V_{gh}$ for every $g,h\in G$ and whose morphisms are linear maps preserving grading which are also left $A$-linear. If $A$ is in Bimon($\mathrm{Vec}_{G}$), i.e. it is a color bialgebra, then the category $_{A}\mathrm{Vec}_{G}$ is monoidal with the same tensor product, unit object and constraints of $\mathrm{Vec}_{G}$ and then of $\mathrm{Vec}_{\Bbbk}$. Here the unit object $\Bbbk$ has left $A$-action such that $a\cdot k=\epsilon(a)k$ for $a\in A$ and $k\in\Bbbk$ and, given $V$ and $W$ in $_{A}\mathrm{Vec}_{G}$, $V\otimes W$ has left $A$-action given by $a\cdot(v\otimes w)=\phi(|a_{2}|,|v|)a_{1}\cdot v\otimes a_{2}\cdot w$, for $a\in A$, $v\in V$ and $w\in W$. With quotient color left $A$-module coalgebras we mean quotient objects in Comon($_{A}\mathrm{Vec}_{G}$), thus quotient graded vector spaces which are left $A$-modules with left $A$-action in Vec$_{G}$, which are also coalgebras with $\Delta$ and $\epsilon$ in $_{A}\mathrm{Vec}_{G}$; in particular, as coalgebras, they are quotients of a graded coalgebra with a graded two-sided coideal.
\end{oss}

Given $A$ in $\Hc$ we define in Vec$_{G}$ the morphism
\[
\xi_{A}:A\otimes A\to A,\ a\otimes x\mapsto\phi(|a_{2}|,|x|)a_{1}xS(a_{2}),
\]
i.e. $\xi_{A}=m_{A}\circ(m_{A}\otimes S_{A})\circ(\mathrm{Id}_{A}\otimes c_{A,A})\circ(\Delta_{A}\otimes\mathrm{Id}_{A})$, with $c$ the braiding of Vec$_{G}$. 
By analogy with Theorem \ref{ma}, we say that a color Hopf subalgebra $K\subseteq A$ is $\textbf{normal}$ if $\xi_{A}(A\otimes K)\subseteq K$. Now we show some properties of the map $\xi_{A}$.

\begin{oss}\label{f(A)}
If $f:A\to B$ is in $\Hc$ then $f(A)$ is in $\Hc$. In fact, by Remark \ref{Im and qu}, we know that $f(A)$ is a graded subspace of $B$ and, as in the usual case, it contains $1_{B}=f(1_{A})$ and it is closed under $m_{B}$ ($m_{B}\circ(f\otimes f)=f\circ m_{A}$) since $f$ is an algebra map, it is closed under $\Delta_{B}$ since $f$ is a coalgebra map ($\Delta_{B}\circ f=(f\otimes f)\circ\Delta$)  and under $S_{B}$ (since $S_{B}\circ f=f\circ S_{A}$). So $f(A)$ is in $\Hc$ by Remark \ref{o1}. Similarly $\mathrm{ker}(f)$, which is graded by Remark \ref{Im and qu}, is a two-sided ideal of $A$ (since $f$ is an algebra map), a two-sided coideal of $A$ (since $f$ is a coalgebra map) and it is closed under $S_{A}$, so that $A/\mathrm{ker}(f)$ is in $\Hc$ by Remark \ref{oso}. 
\end{oss}

\begin{lem}\label{xi}
Let $A$ and $B$ in $\Hc$. Then the following properties hold: $\\$ $\\$
1) $\xi_{A}$ is a morphism of coalgebras. $\\$ $\\$
2) Given $p:A\to B$ in $\Hc$ then $\xi_{B}\circ(p\otimes p)=p\circ\xi_{A}$. As a consequence with $p$ surjective, if $D$ is a normal color Hopf subalgebra of $A$ then $p(D)$ is a normal color Hopf subalgebra of $B$. $\\$ $\\$
3) $A$ is commutative if and only if $\xi_{A}=l_{A}\circ(\epsilon_{A}\otimes\mathrm{Id}_{A})$. As a consequence if $A$ is commutative every $K\subseteq A$ color Hopf subalgebra is normal.
\end{lem}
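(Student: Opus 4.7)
The plan is to prove the three items in order.

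For (1), I would observe that
\[
\xi_A = m_A\circ(m_A\otimes S_A)\circ(\mathrm{Id}_A\otimes c_{A,A})\circ(\Delta_A\otimes\mathrm{Id}_A)
\]
is a composite of coalgebra morphisms in $\mathrm{Vec}_G$: the braiding $c_{A,A}$ and identities are always such; $m_A$ is a coalgebra map because $A$ is a color bialgebra; and cocommutativity of $A$ forces both $\Delta_A$ (by the Eckmann--Hilton principle underlying \eqref{coco}) and $S_A$ (as recalled in the preliminaries) to be coalgebra morphisms. Composition and tensor product in $\mathrm{Comon}(\mathrm{Vec}_G)$ then yield the result.

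For (2), the identity $\xi_B\circ(p\otimes p)=p\circ\xi_A$ is a routine diagram chase: since $p$ is a morphism in $\Hc$ it commutes with $m$, $\Delta$ and $S$, while naturality of the braiding lets one slide $p\otimes p\otimes p$ through $\mathrm{Id}\otimes c_{A,A}$. Stringing these identities along the defining composite of $\xi$ gives the equality. For the consequence, given a normal color Hopf subalgebra $D$ of $A$ and a surjection $p$, first apply Remark \ref{f(A)} to the restriction $p|_D\colon D\to B$ to see that $p(D)$ is a color Hopf subalgebra of $B$. Normality is then immediate: every element of $B\otimes p(D)$ has the form $p(a)\otimes p(d)$, and $\xi_B(p(a)\otimes p(d))=p(\xi_A(a\otimes d))\in p(D)$ because $\xi_A(a\otimes d)\in D$ by normality of $D$.

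For the forward direction of (3), on homogeneous pairs I would use graded commutativity to push $a_1$ past $x$, obtaining
\[
\xi_A(a\otimes x) = \phi(|a_2|,|x|)\,a_1 x\, S(a_2) = \phi(|a|,|x|)\,x\,a_1 S(a_2) = \phi(|a|,|x|)\,\epsilon(a)\,x,
\]
and then absorb the $\phi$ factor: whenever $\epsilon(a)\neq 0$, the fact that $\epsilon$ preserves gradings forces $|a|=1_G$, whence $\phi(|a|,|x|)=1_\Bbbk$, so $\xi_A(a\otimes x)=\epsilon(a)x$.

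The converse implication in (3) is the main obstacle, since one must recover the full commutativity condition from a single equality of maps $A\otimes A\to A$. My strategy is to introduce the composite
\[
F := m_A\circ(\xi_A\otimes\mathrm{Id}_A)\circ(\mathrm{Id}_A\otimes c_{A,A})\circ(\Delta_A\otimes\mathrm{Id}_A)\colon A\otimes A\to A
\]
and evaluate it on $a\otimes x$ in two independent ways. Using the hypothesis $\xi_A(a_1\otimes x)=\epsilon(a_1)x$, a grading analysis (nonvanishing summands force $|a_1|=1_G$, hence $|a_2|=|a|$) together with the counit axiom $\sum\epsilon(a_1)a_2=a$ produces $F(a\otimes x)=\phi(|a|,|x|)\,xa$. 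Unfolding instead the definition of $\xi_A$ and invoking coassociativity to identify $(a_1)_2\otimes a_2$ with $(a_2)_1\otimes(a_2)_2$, the antipode axiom $m_A\circ(S_A\otimes\mathrm{Id}_A)\circ\Delta_A=u_A\circ\epsilon_A$ applied to $a_2$ collapses the inner factor into $\epsilon(a_2)1_A$, and the $\phi$ factor again disappears on nonvanishing summands, giving $F(a\otimes x)=ax$. Equating the two evaluations yields $ax=\phi(|a|,|x|)\,xa$ for all homogeneous $a,x$, i.e., the color commutativity of $A$. The stated consequence is then immediate: if $A$ is commutative then $\xi_A(a\otimes k)=\epsilon(a)k\in K$ for every color Hopf subalgebra $K$, so $\xi_A(A\otimes K)\subseteq K$.
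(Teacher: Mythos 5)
Your proof is correct, and part of it takes a genuinely different route from the paper's. For item 1) the paper verifies $\Delta_{A}\circ\xi_{A}=(\xi_{A}\otimes\xi_{A})\circ\Delta_{A\otimes A}$ and $\epsilon_{A}\circ\xi_{A}=\epsilon_{A\otimes A}$ by a direct Sweedler-notation computation with the bicharacter; you instead observe that every factor in the composite $\xi_{A}=m_{A}\circ(m_{A}\otimes S_{A})\circ(\mathrm{Id}_{A}\otimes c_{A,A})\circ(\Delta_{A}\otimes\mathrm{Id}_{A})$ is a comonoid morphism --- $m_{A}$ because $A$ is a bimonoid, $c_{A,A}$ because $\mathrm{Comon}(\mathrm{Vec}_{G})$ is symmetric monoidal, and $\Delta_{A}$, $S_{A}$ precisely because $A$ is cocommutative --- so the composite is one. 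This is cleaner and makes transparent exactly where cocommutativity enters, at the cost of leaning on the monoidal structure of $\mathrm{Comon}(\mathrm{Vec}_{G})$ instead of producing the explicit formula. Item 2) and the forward direction of 3) coincide with the paper's argument (the paper absorbs the $\phi$ factor by commuting $x$ past $S(a_{2})$ rather than past $a_{1}$, an immaterial difference). For the converse of 3) the paper starts from $\epsilon(a)x=\phi(|a_{2}|,|x|)a_{1}xS(a_{2})$, tensors with the remaining leg of $\Delta$ and multiplies, reaching $xa=\phi(|x|,|a|)ax$ in one chain of identities; your two-fold evaluation of $F=m_{A}\circ(\xi_{A}\otimes\mathrm{Id}_{A})\circ(\mathrm{Id}_{A}\otimes c_{A,A})\circ(\Delta_{A}\otimes\mathrm{Id}_{A})$ is essentially the same computation packaged symmetrically: one evaluation uses the hypothesis and the counit axiom to give $\phi(|a|,|x|)xa$, the other uses coassociativity and the antipode axiom to give $ax$. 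Both are valid; yours isolates the role of each axiom, while the paper's is shorter. The grading bookkeeping you invoke (a nonvanishing $\epsilon(a_{1})$ forces $|a_{1}|=1_{G}$, hence $|a_{2}|=|a|$) is exactly the paper's observation around \eqref{to}, so nothing is missing there.
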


\begin{proof}
In order to prove 1) we have to show that $\Delta_{A}\circ\xi_{A}=(\xi_{A}\otimes\xi_{A})\circ\Delta_{A\otimes A}$ where $\Delta_{A\otimes A}=(\mathrm{Id}_{A}\otimes c_{A,A}\otimes\mathrm{Id}_{A})\circ(\Delta_{A}\otimes\Delta_{A})$ and $\epsilon_{A}\circ\xi_{A}=\epsilon_{A\otimes A}$ where $\epsilon_{A\otimes A}=r_{\Bbbk}\circ(\epsilon_{A}\otimes\epsilon_{A})$. Given $a,x\in A$, since $\Delta$ is a morphism of graded algebras and $A$ is cocommutative, we have that 
\[
\begin{split}
\Delta_{A}\xi_{A}(a\otimes x)&=\Delta_{A}(\phi(|a_{2}|,|x|)a_{1}xS(a_{2}))=\phi(|a_{2}|,|x|)(m_{A}\otimes m_{A})(\mathrm{Id}\otimes c_{A,A}\otimes\mathrm{Id})(\Delta_{A}(a_{1}x)\otimes\Delta_{A}(S(a_{2})))\\&=\phi(|a_{2_{1}}||a_{2_{2}}|,|x_{1}||x_{2}|)(m_{A}\otimes m_{A})(\mathrm{Id}\otimes c_{A,A}\otimes\mathrm{Id})(\phi(|a_{1_{2}}|,|x_{1}|)a_{1_{1}}x_{1}\otimes a_{1_{2}}x_{2}\otimes S(a_{2_{1}})\otimes S(a_{2_{2}}))\\&=\phi(|a_{2_{1}}||a_{2_{2}}|,|x_{1}||x_{2}|)\phi(|a_{1_{2}}|,|x_{1}|)\phi(|a_{1_{2}}||x_{2}|,|a_{2_{1}}|)a_{1_{1}}x_{1}S(a_{2_{1}})\otimes a_{1_{2}}x_{2}S(a_{2_{2}})\\&=\phi(|a_{2_{1}}|,|x_{1}|)\phi(|a_{2_{2}}|,|x_{1}|)\phi(|a_{2_{2}}|,|x_{2}|)\phi(|a_{1_{2}}|,|x_{1}|)\phi(|a_{1_{2}}|,|a_{2_{1}}|)a_{1_{1}}x_{1}S(a_{2_{1}})\otimes a_{1_{2}}x_{2}S(a_{2_{2}})\\&\overset{(*)}{=}\phi(|a_{2}|,|x_{1}|)\phi(|a_{2_{2}}|,|x_{2}|)\phi(|a_{1_{2}}|,|x_{1}|)a_{1_{1}}x_{1}S(a_{1_{2}})\otimes a_{2_{1}}x_{2}S(a_{2_{2}})\\&=(\xi_{A}\otimes\xi_{A})(\phi(|a_{2}|,|x_{1}|)a_{1}\otimes x_{1}\otimes a_{2}\otimes x_{2})=(\xi_{A}\otimes\xi_{A})\Delta_{A\otimes A}(a\otimes x),
\end{split}
\]
where $(*)$ follows since $A$ is cocommutative and then $\Delta_{A}$ is of graded coalgebras, i.e. $(\Delta_{A}\otimes\Delta_{A})\circ\Delta_{A}=\Delta_{A\otimes A}\circ\Delta_{A}=(\mathrm{Id}_{A}\otimes c_{A,A}\otimes\mathrm{Id}_{A})\circ(\Delta_{A}\otimes\Delta_{A})\circ\Delta_{A}$, which on $a\in A$ is $a_{1_{1}}\otimes a_{1_{2}}\otimes a_{2_{1}}\otimes a_{2_{2}}=\phi(|a_{1_{2}}|,|a_{2_{1}}|)a_{1_{1}}\otimes a_{2_{1}}\otimes a_{1_{2}}\otimes a_{2_{2}}$. Furthermore we have that
\[
\begin{split}
\epsilon_{A}(\xi_{A}(a\otimes x))&=\epsilon_{A}(\phi(|a_{2}|,|x|)a_{1}xS(a_{2}))=\phi(|a_{2}|,|x|)\epsilon_{A}(a_{1})\epsilon_{A}(x)\epsilon_{A}(S(a_{2}))\\&=\epsilon_{A}(a_{1})\epsilon_{A}(a_{2})\epsilon_{A}(x)=\epsilon_{A}(a)\epsilon_{A}(x)=\epsilon_{A\otimes A}(a\otimes x).
\end{split}
\]
Now, since $p$ is a morphism of algebras and of coalgebras, we have that
\[
\begin{split}
\xi_{B}(p(a)\otimes p(x))&=\phi(|p(a_{2})|,|p(x)|)p(a_{1})p(x)S(p(a_{2}))=\phi(|a_{2}|,|x|)p(a_{1}x)p(S(a_{2}))\\&=p(\phi(|a_{2}|,|x|)a_{1}xS(a_{2}))=p(\xi_{A}(a\otimes x)).
\end{split}
\]
Suppose that $p$ is surjective and that $D$ is a normal color Hopf subalgebra of $A$. We already know that $p(D)$ is a color Hopf subalgebra of $B$ by Remark \ref{f(A)}, we have to show that it is normal, i.e. that, given $b\in B$ and $d\in D$, then $\xi_{B}(b\otimes p(d))\in p(D)$. By surjectivity of $p$ there exists $a\in A$ such that $p(a)=b$. Hence, since $D$ is normal, we obtain
\[
\xi_{B}(b\otimes p(d))=\xi_{B}(p\otimes p)(a\otimes d)=p\xi_{A}(a\otimes d)\in p(D),
\]
thus $p(D)$ is normal in $B$ and then also 2) is proved. Finally if we suppose that $A$ is commutative clearly we obtain that $\xi_{A}(a\otimes x)=\phi(|a_{2}|,|x|)a_{1}xS(a_{2})=a_{1}S(a_{2})x=\epsilon(a)x$, so that $\xi_{A}(A\otimes K)\subseteq K$ for every $K\subseteq A$ color Hopf subalgebra, while if $\phi(|a_{2}|,|x|)a_{1}xS(a_{2})=\epsilon(a)x$ then we have that 
\[
x\epsilon(a)=\phi(|x|,|a|)\epsilon(a)x=\phi(|x|,|a_{1}|)\phi(|x|,|a_{2}|)\phi(|a_{2}|,|x|)a_{1}xS(a_{2})=\phi(|x|,|a_{1}|)a_{1}xS(a_{2}),
\]
hence
\[
x\otimes a=x\otimes\epsilon(a_{1})a_{2}=x\epsilon(a_{1})\otimes a_{2}=\phi(|x|,|a_{1_{1}}|)a_{1_{1}}xS(a_{1_{2}})\otimes a_{2}=\phi(|x|,|a_{1}|)a_{1}xS(a_{2_{1}})\otimes a_{2_{2}}
\]
and then
\[
xa=\phi(|x|,|a_{1}|)a_{1}xS(a_{2_{1}})a_{2_{2}}=\phi(|x|,|a_{1}|)a_{1}x\epsilon(a_{2})=\phi(|x|,|a_{1}|)\phi(|x|,|a_{2}|)a_{1}\epsilon(a_{2})x=\phi(|x|,|a|)ax,
\]
thus $xa=\phi(|x|,|a|)ax$ and then $A$ is commutative and also 3) is shown. 
\end{proof}

\begin{lem}\label{31}
Let $i:B\to A$ be an inclusion in $\Hc$. If $i$ is the categorical kernel of some morphism in $\Hc$, then $B$ is a normal color Hopf subalgebra of $A$.
\end{lem}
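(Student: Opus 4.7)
The plan is to fix a morphism $f:A\to C$ in $\Hc$ whose categorical kernel is $i:B\to A$, so that $B=\mathrm{Hker}(f)=\{x\in A\mid x_1\otimes f(x_2)=x\otimes 1_C\}$ by the explicit description given before the lemma. Two consequences of this will be the workhorses: first, by the construction of equalizers, $B$ is itself a cocommutative color Hopf subalgebra of $A$, so in particular $\Delta_A(B)\subseteq B\otimes B$; second, applying $\epsilon_A\otimes\mathrm{Id}_C$ to the defining identity yields $f(b)=\epsilon(b)1_C$ for every $b\in B$.

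To prove $\xi_A(A\otimes B)\subseteq B$, I would pick $a\in A$, $b\in B$, set $y:=\xi_A(a\otimes b)$, and verify directly that $y_1\otimes f(y_2)=y\otimes 1_C$. The computation breaks into three moves. First, apply Lemma \ref{xi}(1) to expand
\[
\Delta_A(y)=\phi(|a_2|,|b_1|)\,\xi_A(a_1\otimes b_1)\otimes\xi_A(a_2\otimes b_2).
\]
Second, use Lemma \ref{xi}(2) together with $f(b_2)=\epsilon(b_2)1_C$ (valid since $b_2\in B$ by the previous point) to reduce
\[
f(\xi_A(a_2\otimes b_2))=\xi_C(f(a_2)\otimes\epsilon(b_2)1_C)=\epsilon(b_2)\,f(a_2)_1 S(f(a_2)_2)=\epsilon(a_2)\epsilon(b_2)1_C
\]
via the antipode axiom in $C$. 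Third, collapse by counitality, using $a_1\epsilon(a_2)=a$ and $b_1\epsilon(b_2)=b$, to recover $\xi_A(a\otimes b)\otimes 1_C=y\otimes 1_C$.

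The only delicate point is the stray bicharacter $\phi(|a_2|,|b_1|)$ left behind after these substitutions. However, it trivializes automatically: whenever $\epsilon(a_2)\neq 0$ one has $|a_2|=1_G$, in which case $\phi(|a_2|,|b_1|)=1_\Bbbk$ by the defining property of a bicharacter. No extra hypothesis on $\phi$ is therefore needed. The main conceptual obstacle is recognising that one should apply the compatibility $f\circ\xi_A=\xi_C\circ(f\otimes f)$ at the \emph{second} tensor factor of $\Delta_A(y)$ (not merely verifying $f(y)=\epsilon(y)1_C$, which is strictly weaker than $y\in B$) and exploiting that $B$ is closed under $\Delta_A$ to guarantee $b_2\in B$ where the identification $f(b_2)=\epsilon(b_2)1_C$ can be invoked.
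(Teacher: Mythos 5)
Your proposal is correct and follows essentially the same route as the paper: both arguments hinge on Lemma \ref{xi}(1) to expand $\Delta_A\circ\xi_A$ and then kill the second tensor factor using $f\circ\xi_A=u_C\circ\epsilon_{A\otimes A}$ on $A\otimes\mathrm{Hker}(f)$ (which you obtain via Lemma \ref{xi}(2) and the paper obtains by expanding $\xi_A$ directly, the same computation). Your element-level handling of the stray factor $\phi(|a_2|,|b_1|)$ via $\epsilon(a_2)\neq 0\Rightarrow|a_2|=1_G$ is exactly what the paper's map-level step $(\mathrm{Id}\otimes\mathrm{Id}\otimes\epsilon_{A\otimes A})\Delta_{A\otimes A}(a\otimes x)=a\otimes x\otimes 1_{\Bbbk}$ encodes, so the two proofs coincide in substance.
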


\begin{proof}
    Suppose that $B=\mathrm{Hker}(f)$ for some morphism $f:A\to C$ in $\Hc$. We already know that $B$ is a color Hopf subalgebra of $A$, we have to prove that it is normal, i.e. that, given $x\in\mathrm{Hker}(f)$ and $a\in A$, then $\xi_{A}(a\otimes x)\in\mathrm{Hker}(f)$, i.e. $(\mathrm{Id}\otimes f)\Delta_{A}\xi_{A}(a\otimes x)=\xi_{A}(a\otimes x)\otimes1_{C}$. But now $f(x)=\epsilon(x)1_{C}$ with $x\in\mathrm{Hker}(f)$ and, since $f$ is a morphism of graded algebras, we obtain
\[
\begin{split}
f\xi_{A}(a\otimes x)&=\phi(|a_{2}|,|x|)f(a_{1})f(x)f(S(a_{2}))=\phi(|a_{2}|,|x|)f(a_{1})\epsilon(x)f(S(a_{2}))=f(a_{1})f(S(a_{2}))\epsilon(x)\\&=f(a_{1}S(a_{2}))\epsilon(x)=f(\epsilon(a)1_{A})\epsilon(x)=\epsilon(a)\epsilon(x)1_{C}=u_{C}\epsilon_{A\otimes A}(a\otimes x)
\end{split}
\]
and then, using 1) of Lemma \ref{xi}, we have that
\[
\begin{split}
    (\mathrm{Id}\otimes f)\Delta_{A}\xi_{A}(a\otimes x)&=(\mathrm{Id}\otimes f)(\xi_{A}\otimes\xi_{A})\Delta_{A\otimes A}(a\otimes x)=(\xi_{A}\otimes u_{C})(\mathrm{Id}\otimes\mathrm{Id}\otimes\epsilon_{A\otimes A})\Delta_{A\otimes A}(a\otimes x)\\&=(\xi_{A}\otimes u_{C})(a\otimes x\otimes1_{\Bbbk})=\xi_{A}(a\otimes x)\otimes1_{C},
\end{split}
\]
thus $\mathrm{Hker}(f)$ is normal.
\end{proof}

A generalization of Newman's Theorem for the category Hopf$_{\mathrm{coc}}(\mathrm{Vec}_{\mathbb{Z}_2})$ of cocommutative super Hopf algebras is  proved by A. Masuoka in the case char$\Bbbk\not=2$. The result is the following:

\begin{thm}\label{ma}$($c.f. \cite[Theorem 3.10 (3)]{article2}$)$
Let $H$ be a cocommutative super Hopf algebra. Then the super Hopf subalgebras $K\subseteq H$ and the quotient super left $H$-module coalgebras $Q$ of $H$ are in 1-1 correspondence, under $K\mapsto H/HK^{+}$, $Q\mapsto{}^{\mathrm{co}Q}H(=H^{\mathrm{co}Q})$. This restricts to a 1-1 correspondence between those super Hopf subalgebras $K$ which are normal in the sense that $(-1)^{|h_{2}||x|}h_{1}xS(h_{2})\in K$ for every $h\in H$ and $x\in K$ and the quotient super Hopf algebras.
\end{thm}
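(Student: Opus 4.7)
The plan is to reduce this theorem to the classical Newman correspondence for cocommutative Hopf algebras by passing through the Radford biproduct (bosonization) $\widehat{H} := H \# \Bbbk\mathbb{Z}_{2}$, which is an ordinary cocommutative Hopf algebra over $\Bbbk$. Here $\Bbbk\mathbb{Z}_{2}$ acts on $H$ via the parity automorphism $g \cdot x = (-1)^{|x|}x$ and coacts on $H$ via the grading; that the biproduct is then an ordinary Hopf algebra is Radford's theorem. The hypothesis $\operatorname{char}\Bbbk\neq 2$ ensures that $\Bbbk\mathbb{Z}_{2}$ is semisimple and cosemisimple, so that the $\mathbb{Z}_{2}$-grading on every $\Bbbk\mathbb{Z}_{2}$-comodule can be canonically recovered from its comodule structure via the central idempotents $\tfrac{1\pm g}{2}$.

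First I would establish a dictionary between super substructures of $H$ and classical substructures of $\widehat{H}$ that contain $\Bbbk\mathbb{Z}_{2}$. Concretely, super Hopf subalgebras $K\subseteq H$ correspond bijectively to ordinary Hopf subalgebras $\widehat{K}\subseteq\widehat{H}$ containing $\Bbbk\mathbb{Z}_{2}$, via $\widehat{K}=K\#\Bbbk\mathbb{Z}_{2}$; and quotient super left $H$-module coalgebras $Q$ of $H$ correspond to quotient ordinary left $\widehat{H}$-module coalgebras $\widehat{Q}=Q\otimes\Bbbk\mathbb{Z}_{2}$ of $\widehat{H}$ on which the composite $\Bbbk\mathbb{Z}_{2}\hookrightarrow\widehat{H}\twoheadrightarrow\widehat{Q}$ remains an inclusion. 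Both directions are standard once one observes that an object of $\mathrm{Vec}_{\mathbb{Z}_{2}}$ is the same thing as a left $\Bbbk\mathbb{Z}_{2}$-comodule.

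Next I would invoke the classical Newman theorem \cite[Theorem 4.1]{article8} applied to $\widehat{H}$, which gives a bijection between Hopf subalgebras of $\widehat{H}$ and left ideals that are two-sided coideals of $\widehat{H}$ (equivalently, quotient left $\widehat{H}$-module coalgebras), sending $\widehat{K}\mapsto\widehat{H}/\widehat{H}\widehat{K}^{+}$ and conversely $\widehat{Q}\mapsto\widehat{H}^{\mathrm{co}\widehat{Q}}$. Restricting this bijection to Hopf subalgebras containing $\Bbbk\mathbb{Z}_{2}$ and transferring through the dictionary yields the first asserted correspondence. The identifications $HK^{+}\leftrightarrow\widehat{H}\widehat{K}^{+}$ and $H^{\mathrm{co}Q}\leftrightarrow\widehat{H}^{\mathrm{co}\widehat{Q}}$ are verified by direct computation, using that multiplication in $\widehat{H}$ restricts correctly to $H\otimes 1$ and that the coinvariants split compatibly with the $\mathbb{Z}_{2}$-grading.

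For the refined correspondence, I would check that a super Hopf subalgebra $K\subseteq H$ is normal in the sense $(-1)^{|h_{2}||x|}h_{1}xS(h_{2})\in K$ for all $h\in H$, $x\in K$, if and only if $\widehat{K}$ is normal in $\widehat{H}$ in the classical sense, i.e.\ $h\widehat{K}^{+}=\widehat{K}^{+}h$ in $\widehat{H}$; equivalently, $HK^{+}$ is a two-sided ideal of $H$. The super adjoint action of $H$ on itself is the restriction of the ordinary adjoint action of $\widehat{H}$ on itself through the embedding $H\hookrightarrow\widehat{H}$, so this equivalence is immediate. Classical Newman then gives that $\widehat{H}/\widehat{H}\widehat{K}^{+}$ is a quotient Hopf algebra, which descends to the super quotient Hopf algebra $H/HK^{+}$. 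The main obstacle I expect is not the super-to-ordinary transfer itself but the careful bookkeeping required to show that the correspondence between quotient super left $H$-module coalgebras of $H$ and quotient ordinary left $\widehat{H}$-module coalgebras of $\widehat{H}$ containing a copy of $\Bbbk\mathbb{Z}_{2}$ is a genuine bijection; this is precisely where $\operatorname{char}\Bbbk\neq 2$ is indispensable, since it is the semisimplicity of $\Bbbk\mathbb{Z}_{2}$ that supplies the canonical splitting of the grading on every quotient.
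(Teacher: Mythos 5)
The paper does not prove this statement at all: Theorem \ref{ma} is imported verbatim from Masuoka \cite[Theorem 3.10 (3)]{article2}, whose own proof goes through the structure theorem for cocommutative super Hopf algebras in characteristic $\neq 2$ (a super analogue of Kostant's decomposition, $H\cong\wedge(W)\rtimes\overline{H}$ with $\overline{H}$ an ordinary cocommutative Hopf algebra), not through bosonization. So your proposal has to stand on its own, and as written it has a genuine gap.

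The gap is at the step where you invoke the classical Newman theorem \cite[Theorem 4.1]{article8} for $\widehat{H}=H\#\Bbbk\mathbb{Z}_{2}$. Newman's theorem is a statement about \emph{cocommutative} ordinary Hopf algebras, but the bosonization of a cocommutative super Hopf algebra is essentially never cocommutative: the smash coproduct is $\Delta(x\# g^{i})=(x_{1}\# g^{|x_{2}|+i})\otimes(x_{2}\# g^{i})$, so an odd primitive $x\in H$ becomes a $(g,1)$-skew-primitive in $\widehat{H}$, and $\tau\Delta(x\#1)=(1\#1)\otimes(x\#1)+(x\#1)\otimes(1\#g)\neq\Delta(x\#1)$. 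Concretely, for $H=\wedge(\Bbbk)$ the biproduct $\widehat{H}$ is Sweedler's four-dimensional Hopf algebra, which is neither commutative nor cocommutative. Hence the classical correspondence cannot be applied to $\widehat{H}$ as stated; to salvage the strategy you would need one of the faithfully (co)flat correspondence theorems (Takeuchi, Masuoka--Wigner, Schneider) for non-cocommutative Hopf algebras, together with a verification of its hypotheses for $\widehat{H}$ — a substantially heavier input than ``classical Newman,'' and one that is not automatic in positive characteristic.

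There is also a smaller but real error in your dictionary on the quotient side. If $\widehat{K}=K\#\Bbbk\mathbb{Z}_{2}$ contains $\Bbbk\mathbb{Z}_{2}$, then $1\#g-1\#1\in\widehat{K}^{+}$, so in $\widehat{H}/\widehat{H}\widehat{K}^{+}$ the image of $g$ equals the image of $1$; the composite $\Bbbk\mathbb{Z}_{2}\to\widehat{H}\twoheadrightarrow\widehat{Q}$ therefore factors through the counit rather than remaining injective, and $\widehat{Q}$ is isomorphic to $H/HK^{+}$ as a vector space, not to $Q\otimes\Bbbk\mathbb{Z}_{2}$ (already for $H=\Bbbk$, $K=\Bbbk$ the two sides have dimensions $1$ and $2$). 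The class of quotient left $\widehat{H}$-module coalgebras corresponding to Hopf subalgebras containing $\Bbbk\mathbb{Z}_{2}$ is the class on which $g$ acts but maps to $1$, so the bijection you want to restrict to must be set up accordingly before any transfer back to the super setting can be carried out.
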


We call the bijections in analogy with those given for Newman's Theorem in \cite{article1}, i.e. 
\[
\phi_{H}:K\mapsto H/HK^{+}\ \text{  and  }\ \psi_{H}:Q\mapsto{}^{\mathrm{co}Q}H(=H^{\mathrm{co}Q}),
\]
where $H^{\mathrm{co}Q}$ is defined as before. Observe that last statement in Theorem \ref{ma} is a generalization of the equivalence between (1) and (2) of \cite[Corollary 2.3]{article1}. Here we obtain immediately a complete generalization of \cite[Corollary 2.3]{article1} for cocommutative super Hopf algebras.

\begin{cor}\label{c1}
For a super Hopf subalgebra $B\subseteq A$ of a cocommutative super Hopf algebra $A$, the following conditions are equivalent: $\\$ $\\$
(1) $B$ is a normal super Hopf subalgebra; $\\$ $\\$
(2) $A/AB^{+}$ is a quotient super Hopf algebra; $\\$ $\\$
(3) the inclusion morphism $B\to A$ is the categorical kernel of some morphism in $\mathrm{Hopf_{coc}}(\mathrm{Vec}_{\mathbb{Z}_{2}})$.
\end{cor}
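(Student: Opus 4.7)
The plan is to close the equivalence triangle by showing $(1)\Leftrightarrow(2)$ and $(3)\Rightarrow(1)$, $(2)\Rightarrow(3)$, with Theorem \ref{ma} doing most of the work and Lemma \ref{31} handling one implication for free.

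For the equivalence $(1)\Leftrightarrow(2)$, I would simply invoke the last sentence of Theorem \ref{ma}: the bijection $\phi_{A}\colon K\mapsto A/AK^{+}$ restricts to a bijection between normal super Hopf subalgebras of $A$ and those quotient super left $A$-module coalgebras of the form $A/AK^{+}$ which are themselves quotient super Hopf algebras. Applied to $K=B$, this says that $B$ is normal if and only if $A/AB^{+}$ is a quotient super Hopf algebra.

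For $(3)\Rightarrow(1)$, the statement of Lemma \ref{31} was proved in the generality of $\Hc$, so specializing to $G=\mathbb{Z}_{2}$ (with the bicharacter giving the sign braiding) gives immediately that any $B$ arising as a categorical kernel in $\mathrm{Hopf_{coc}}(\mathrm{Vec}_{\mathbb{Z}_{2}})$ is a normal super Hopf subalgebra of $A$, since normality for super Hopf algebras is exactly the condition $\xi_{A}(A\otimes B)\subseteq B$ which on elements becomes $(-1)^{|h_{2}||x|}h_{1}xS(h_{2})\in B$.

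The remaining implication $(2)\Rightarrow(3)$ is where Theorem \ref{ma} is used in its second direction. Assume $A/AB^{+}$ is a quotient super Hopf algebra; then the canonical projection $\pi\colon A\to A/AB^{+}$ is a morphism in $\mathrm{Hopf_{coc}}(\mathrm{Vec}_{\mathbb{Z}_{2}})$. Its categorical kernel, as recalled just before Remark \ref{ms}, is
\[
\mathrm{Hker}(\pi)=\{x\in A\mid(\mathrm{Id}_{A}\otimes\pi)\Delta(x)=x\otimes\pi(1_{A})\}=A^{\mathrm{co}(A/AB^{+})}=\psi_{A}(\phi_{A}(B)).
\]
By the bijectivity statement in Theorem \ref{ma}, $\psi_{A}\circ\phi_{A}=\mathrm{Id}$ on super Hopf subalgebras, so $\mathrm{Hker}(\pi)=B$ and the inclusion $B\hookrightarrow A$ is indeed the categorical kernel of $\pi$, establishing $(3)$.

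No real obstacle is expected here since the deep content is packaged in Theorem \ref{ma} and Lemma \ref{31}; the only point requiring minor care is matching the categorical description of $\mathrm{Hker}(\pi)$ with the coinvariants notation $A^{\mathrm{co}(A/AB^{+})}$, which is however literally the same defining equation.
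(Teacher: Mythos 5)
Your proposal is correct and follows essentially the same route as the paper: $(1)\Leftrightarrow(2)$ by the last statement of Theorem \ref{ma}, $(3)\Rightarrow(1)$ by specializing Lemma \ref{31}, and $(2)\Rightarrow(3)$ by identifying $\mathrm{Hker}(\pi)$ with $A^{\mathrm{co}(A/AB^{+})}=\psi_{A}(\phi_{A}(B))=B$. No gaps.
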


\begin{proof}
We already know that (1) and (2) are equivalent by Theorem \ref{ma}. $\\$ 
(2)$\implies$(3). Since $A/AB^{+}$ is a quotient super Hopf algebra, the canonical projection $\pi:A\to A/AB^{+}$ is a morphism of cocommutative super Hopf algebras and then clearly $A^{\mathrm{co}\frac{A}{AB^{+}}}$ is exactly $\mathrm{Hker}(\pi)$ since $x\otimes\pi(1_{A})=x\otimes1_{A/AB^{+}}$, for $x\in A$. But now, using Theorem \ref{ma}, we obtain
\[
\mathrm{Hker}(\pi)=A^{\mathrm{co}\frac{A}{AB^{+}}}=\psi_{A}(A/AB^{+})=\psi_{A}(\phi_{A}(B))=B.
\]
Hence $(B,j)$ is the kernel of $\pi$ in $\mathrm{Hopf_{coc}}(\mathrm{Vec}_{\mathbb{Z}_{2}})$, where $j:B\to A$ is the canonical inclusion. 
We already know that (3)$\implies$(1) by Lemma \ref{31} and then we are done.
\begin{invisible}
Suppose that $B=\mathrm{Hker}(f)$ for some morphism $f:A\to C$ in $\mathrm{Hopf_{coc}}(\mathrm{Vec}_{\mathbb{Z}_{2}})$. We already know that it is a super Hopf subalgebra of $A$, we have to prove that it is normal, i.e. that, given $x\in\mathrm{Hker}(f)$ and $a\in A$, then $\xi_{A}(a\otimes x)\in\mathrm{Hker}(f)$, i.e. $(\mathrm{Id}\otimes f)\Delta_{A}\xi_{A}(a\otimes x)=\xi_{A}(a\otimes x)\otimes1_{C}$. But now $f(x)=\epsilon(x)1_{C}$ with $x\in\mathrm{Hker}(f)$ and, since $f$ is a morphism of super algebras, we obtain
\[
\begin{split}
f\xi_{A}(a\otimes x)&=(-1)^{|a_{2}||x|}f(a_{1})f(x)f(S(a_{2}))=(-1)^{|a_{2}||x|}f(a_{1})\epsilon(x)f(S(a_{2}))=f(a_{1})f(S(a_{2}))\epsilon(x)\\&=f(a_{1}S(a_{2}))\epsilon(x)=f(\epsilon(a)1_{A})\epsilon(x)=\epsilon(a)\epsilon(x)1_{C}=u_{C}\epsilon_{A\otimes A}(a\otimes x)
\end{split}
\]
and then, using 1) of Lemma \ref{xi}, we have that
\[
\begin{split}
    (\mathrm{Id}\otimes f)\Delta_{A}\xi_{A}(a\otimes x)&=(\mathrm{Id}\otimes f)(\xi_{A}\otimes\xi_{A})\Delta_{A\otimes A}(a\otimes x)=(\xi_{A}\otimes u_{C})(\mathrm{Id}\otimes\mathrm{Id}\otimes\epsilon_{A\otimes A})\Delta_{A\otimes A}(a\otimes x)\\&=(\xi_{A}\otimes u_{C})(a\otimes x\otimes1_{\Bbbk})=\xi_{A}(a\otimes x)\otimes1_{C},
\end{split}
\]
thus $\mathrm{Hker}(f)$ is normal.
\end{invisible}
\end{proof}
\begin{invisible}
\begin{oss}
It is clear that $(3)\implies(1)$ holds also for color cocommutative Hopf algebras with the same proof and an arbitrary commutation factor $\phi$ on $G$ instead of that on $\mathbb{Z}_2$, since the same is true for Lemma \ref{xi}.
\end{oss}
\end{invisible}

We will obtain a generalization of Theorem \ref{ma} and of Corollary \ref{c1} for $\Hc$ that will be used for the regularity and the semi-abelian condition of $\Hc$.

\subsection{From color Hopf algebras to super Hopf algebras}

In order to use Theorem \ref{ma} we are interested in obtaining a braided strong monoidal functor from the category Vec$_{G}$ to the category Vec$_{\mathbb{Z}_2}$. In this subsection $G$ and $L$ will denote arbitrary abelian groups. 

\begin{oss}\label{F}
 As it is said in \cite[Example 2.5.2]{po}, given $f:G\to L$ a morphism of groups, any $G$-graded vector space is naturally $L$-graded (by pushforward of grading) and we have a natural strict monoidal functor $(F,\phi^0,\phi^2):\mathrm{Vec}_{G}\to\mathrm{Vec}_{L}$ (also denoted by $f_{*}$). The functor $F:\mathrm{Vec}_{G}\to\mathrm{Vec}_{L}$ is defined, given $V=\bigoplus_{g\in G}{V_{g}}$ and $f$ in Vec$_{G}$, such that  
\[
F(\bigoplus_{g\in G}{V_{g}})=\bigoplus_{f(g)\in L}{V_{f(g)}}\ \text{ where }\ V_{f(g)}=\bigoplus_{g'\in f^{-1}f(g)}{V_{g'}}\ \text{ and }\ F(f)=f,
\]
so $V_{f(g)}$ is the direct sum of all the $V_{g'}$'s such that $f(g')$ is the same element $f(g)$ in $L$ and then $F(V)$ is still $V$ as vector space but with a grading over $L$ in which $V_{l}=\{0\}$ if $l\notin\mathrm{Im}(f)$. Observe that $F(\Bbbk)=\Bbbk$ with $\Bbbk_{1_{L}}=\Bbbk$ and $\Bbbk_{l}=0$ if $l\not=1_{L}$, so that one can define $\phi^{0}:=\mathrm{Id}_{\Bbbk}$ and, given $V,W$ in Vec$_{G}$, we have  
\[
\begin{split}
F(V\otimes W)&=F(\bigoplus_{g\in G}{(V\otimes W)_{g}})=\bigoplus_{f(g)\in L}{(V\otimes W)_{f(g)}}=\bigoplus_{f(g)\in L}{V_{f(g)}}\otimes\bigoplus_{f(g)\in L}{W_{f(g)}}
\end{split}
\]
which is $F(V)\otimes F(W)$, so $F(V\otimes W)$ and $F(V)\otimes F(W)$ are the same $L$-graded vector space and then one can define $\phi^{2}_{V,W}:=\mathrm{Id}$ for every $V,W$ in $\mathrm{Vec}_{G}$. Clearly this remark is true also for groups $G$ and $L$ not abelian in which case Vec$_{G}$ and Vec$_{L}$ are not braided.
\end{oss}

In \cite[Remark 1.2]{article5} it is said how to obtain a braided strong monoidal functor from Vec$_{G}$ to Vec$_{L}$ when $G$ and $L$ are finite abelian groups and it is not difficult to see that this works also in the case $G$ and $L$ are not necessarily finite. We recall here how to do it. Clearly, if we define $\phi^2_{V,W}:=\mathrm{Id}$ for every $V$ and $W$ in Vec$_{G}$, we can not obtain in general a braided monoidal functor from Vec$_{G}$ to Vec$_{L}$ since the braiding of Vec$_{G}$ and that of Vec$_{L}$ are different. Thus we define $\phi^0:=\mathrm{Id}_{\Bbbk}$ but we modify the morphisms $\phi^2_{V,W}$ that we want to be isomorphisms in Vec$_{L}$ in order to have a strong monoidal functor and we recall that $F(V\otimes W)=F(V)\otimes F(W)$ in Vec$_{L}$.
Given a map $\gamma:G\times G\to\Bbbk-\{0\}$, one can define, for every $V$ and $W$ in Vec$_{G}$, isomorphisms in Vec$_{G}$ given by $f_{V,W}:V\otimes W\to V\otimes W$, $v\otimes w\mapsto\gamma(g,h)v\otimes w$, for $v\in V_{g}$ and $w\in W_{h}$ and $g,h\in G$, defined on the components of the grading and extended by linearity. We define $\phi^2_{V,W}:=F(f_{V,W})=f_{V,W}$, which are isomorphisms in Vec$_{L}$ for every $V$ and $W$ in Vec$_{G}$. In order to obtain a monoidal functor we need that
\begin{invisible}
Observe that to give isomorphisms $\phi^{2}_{V,W}:F(V\otimes W)\to F(V)\otimes F(W)$ in Vec$_{L}$ for every $V$ and $W$ in Vec$_{G}$ is equivalent to give isomorphisms of vector spaces between $F(V\otimes W)_{l}$ and $(F(V)\otimes F(W))_{l}$ for every $l\in L$ (which are trivial if $l\notin\mathrm{Im}(f)$) and then to give isomorphisms between $F(V_{g}\otimes W_{h})$ and $F(V_{g})\otimes F(W_{h})$ for every $g,h\in G$ with $f(gh)=l$ and every $l\in L$ and then for every $g,h\in G$. Hence we need isomorphisms $\phi^{2}_{V_{g},W_{h}}:F(V_{g}\otimes W_{h})\to F(V_{g})\otimes F(W_{h})$ for every $g,h\in G$. In \cite[Remark 1.2]{article5} it is said how to obtain a braided strong monoidal functor when $G$ and $L$ are finite abelian groups; we sum up the result in Lemma \ref{braided} of which we show the proof by completeness, in the case $G$ and $L$ are abelian groups not necessarily finite.
\end{invisible}
$\gamma$ is a 2-\textit{cocycle} on $G$, i.e. that it satisfies 
\begin{equation}\label{mon1}
\gamma(gh,k)\gamma(g,h)=\gamma(g,hk)\gamma(h,k)\ \text{ for every }\ g,h,k\in G.
\end{equation}
We immediately have that $\gamma(g,1_{G})=\gamma(1_{G},g)=\gamma(1_{G},1_{G})$ for every $g\in G$ and $\gamma$ is said to be normalized if $\gamma(1_{G},1_{G})=1_{\Bbbk}$. Observe also that a bicharacter $\phi$ on $G$ is automatically a normalized 2-cocycle on $G$ since
\[
\phi(gh,k)\phi(g,h)=\phi(g,k)\phi(h,k)\phi(g,h)=\phi(g,hk)\phi(h,k)\ \text{ for every }\ g,h,k\in G
\]
and $\phi(g,1_{G})=\phi(1_{G},g)=1_{\Bbbk}$ for every $g\in G$. We have the following result:

\begin{lem}\label{braided}$($c.f. \cite[Remark 1.2]{article5}$)$
Given $f:G\to L$ a morphism of abelian groups, $F:\mathrm{Vec}_{G}\to\mathrm{Vec}_{L}$ as in Remark \ref{F}, $\phi^0:=\mathrm{Id}_{\Bbbk}$ and $\phi^{2}_{V,W}$ as above for every $V$ and $W$ in $\mathrm{Vec}_{G}$. Then $(F,\phi^0,\phi^2)$ is a strong monoidal functor if and only if $\gamma$ is a normalized 2-cocycle on $G$. Furthermore if $\phi$ and $\theta$ are bicharacters over $G$ and $L$ respectively (which give the respective braidings over $\mathrm{Vec}_{G}$ and $\mathrm{Vec}_{L}$), then $(F,\phi^0,\phi^2)$ is braided if and only if
\begin{equation}\label{mon2}
\phi(g,h)=\theta(f(g),f(h))\frac{\gamma(g,h)}{\gamma(h,g)}\ \text{ for every }\ g,h\in G.
\end{equation}  
\end{lem}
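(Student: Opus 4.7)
The proof is essentially a direct unpacking of the monoidal and braided functor axioms on homogeneous elements, translating them into equations on the scalars $\gamma(g,h)$, $\phi(g,h)$ and $\theta(l_1,l_2)$. Since $F$ is the identity on underlying vector spaces (only the grading is changed by pushforward along $f$), the associators and unit constraints of $\mathrm{Vec}_{G}$ and $\mathrm{Vec}_{L}$ coincide with the usual ones on $\mathrm{Vec}_{\Bbbk}$, so in the axioms we can essentially ignore $a,l,r,a',l',r'$ and their $F$-images, and work with elements.

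For the strong monoidal part, I would check the hexagon axiom and the two unit axioms in turn. Pick $v\in V_{g}$, $w\in W_{h}$, $z\in Z_{k}$ and compute both sides of the hexagon: the left-hand side produces the scalar $\gamma(g,hk)\gamma(h,k)$ on $v\otimes w\otimes z$, while the right-hand side produces $\gamma(gh,k)\gamma(g,h)$. Equating these for all $g,h,k\in G$ gives exactly the 2-cocycle condition \eqref{mon1}. For the unit axioms, on $1\otimes v\in \Bbbk\otimes V_{g}$ (resp.\ $v\otimes 1\in V_{g}\otimes\Bbbk$) the left unit axiom reduces to $\gamma(1_{G},g)=1_{\Bbbk}$ (resp.\ $\gamma(g,1_{G})=1_{\Bbbk}$), and by the cocycle equation these force $\gamma(1_{G},1_{G})=1_{\Bbbk}$, i.e.\ $\gamma$ is normalized. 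Conversely, any normalized 2-cocycle makes every one of these equalities hold, and $\phi^{0}=\mathrm{Id}_{\Bbbk}$ together with the already chosen isomorphisms $\phi^{2}_{V,W}$ is automatically strong.

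For the braided part, I would evaluate the braiding compatibility $c'_{F(V),F(W)}\circ\phi^{2}_{V,W}=\phi^{2}_{W,V}\circ F(c_{V,W})$ on $v\in V_{g}$, $w\in W_{h}$. The left-hand side equals $\gamma(g,h)\,\theta(f(g),f(h))\,w\otimes v$, while the right-hand side equals $\phi(g,h)\,\gamma(h,g)\,w\otimes v$. Equality for all $g,h\in G$ is equivalent to \eqref{mon2}, since $\gamma(h,g)\neq 0$ allows dividing.

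The computations are entirely routine; the only mildly delicate point is to observe that the naturality of $\phi^{2}$ lets us reduce every verification to pure-degree elements, so that the scalar coming out of $\phi^{2}_{V,W}$ is exactly $\gamma(|v|,|w|)$ with no residual ambiguity. I do not expect any genuine obstacle: once the axioms are written on homogeneous tensors, each one collapses to a single scalar identity, and matching these identities up with \eqref{mon1} and \eqref{mon2} finishes both directions of both equivalences.
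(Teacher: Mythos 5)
Your proof is correct and follows essentially the same route as the paper's own (unpublished, commented-out) verification: both evaluate the monoidal coherence constraints on homogeneous tensors to extract the scalar identity $\gamma(gh,k)\gamma(g,h)=\gamma(g,hk)\gamma(h,k)$, the unit axioms to get $\gamma(1_{G},g)=\gamma(g,1_{G})=1_{\Bbbk}$ (equivalently normalization, given the cocycle identity), and the braiding compatibility to get $\gamma(g,h)\theta(f(g),f(h))=\phi(g,h)\gamma(h,g)$, which is \eqref{mon2}. No gaps.
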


\begin{invisible}
\begin{proof}
Clearly $\phi^{2}_{V_{g},W_{h}}:F(V_{g}\otimes W_{h})\to F(V_{g})\otimes F(W_{h})$, $v\otimes w\mapsto\gamma(g,h)v\otimes w$ are isomorphisms of vector spaces for every $g,h\in G$. If we consider vector spaces $U_{g},V_{h},W_{k}$ with $g,h,k\in G$ we have that
\[
(\phi^{2}_{U_{g},V_{h}}\otimes\mathrm{Id})\phi^{2}_{U_{g}\otimes V_{h},W_{k}}(u\otimes v\otimes w)=(\phi^{2}_{U_{g},V_{h}}\otimes\mathrm{Id})(\gamma(gh,k)u\otimes v\otimes w)=\gamma(g,h)\gamma(gh,k)u\otimes v\otimes w
\]
while
\[
(\mathrm{Id}\otimes\phi^{2}_{V_{h},W_{k}})\phi^{2}_{U_{g},V_{h}\otimes W_{k}}(u\otimes v\otimes w)=(\mathrm{Id}\otimes\phi^{2}_{V_{h},W_{k}})(\gamma(g,hk)u\otimes v\otimes w)=\gamma(g,hk)\gamma(h,k)u\otimes v\otimes w
\]
and they are equal for every $u\in U_{g}$, $v\in V_{h}$ and $w\in W_{k}$ and every $g,h,k\in G$ if and only if \eqref{mon1} holds true, i.e. $\gamma$ is a 2-cocycle on $G$. Furthermore
\[
F(l_{U_{g}})(k\otimes u)=k\cdot u\ \text{ and }\ l_{F(U_{g})}(\phi^{0}\otimes\mathrm{Id})\phi^{2}_{\Bbbk_{1_{G}},U_{g}}(k\otimes u)=\gamma(1_{G},g)k\cdot u
\]
are equal for every $k\in\Bbbk$ and $u\in U_{g}$ and every $g\in G$ if and only if $\gamma(1_{G},g)=1_{\Bbbk}$ for all $g\in G$, i.e. $\gamma$ is normalized ($\gamma(g,1_{G})=1_{\Bbbk}$ for every $g\in G$ if we consider $r_{U_{g}}$). Then we have that $(F,\phi^0,\phi^2)$ is a strong monoidal functor if and only if $\gamma$ is a normalized 2-cocycle. Furthermore $(F,\phi^0,\phi^2)$ is braided if and only if for every $g,h\in G$ we have that 
\[
\sigma_{F(V_{g}),F(W_{h})}\phi^{2}_{V_{g},W_{h}}(v\otimes w)=\sigma_{V_{f(g)},W_{f(h)}}(\gamma(g,h)v\otimes w)=\gamma(g,h)\theta(f(g),f(h))w\otimes v
\]
and
\[
\phi^{2}_{W_{h},V_{g}}F(\sigma_{V_{g},W_{h}})(v\otimes w)=\phi^{2}_{W_{h},V_{g}}\sigma_{V_{g},W_{h}}(v\otimes w)=\phi^{2}_{W_{h},V_{g}}(\phi(g,h)w\otimes v)=\gamma(h,g)\phi(g,h)w\otimes v
\]
are equal for every $v\in V_{g}$ and $w\in W_{h}$. Thus $(F,\phi^0,\phi^2)$ is braided if and only if \eqref{mon2} holds true.
\end{proof}
\end{invisible}

\begin{oss}\label{u}
Consider the map $\bar{u}:G\to\Bbbk-\{0\}$ given by $g\mapsto\phi(g,g)$. Note that, since $\phi$ is a commutation factor, we obtain that 
\[
\bar{u}(gg')=\phi(gg',gg')=\phi(g,g)\phi(g,g')\phi(g',g)\phi(g',g')=\phi(g,g)\phi(g',g')=\bar{u}(g)\bar{u}(g')
\]
and then $\bar{u}$ is a morphism of groups. But we also have that, again since $\phi$ is a commutation factor, $\phi(g,g)\phi(g,g)=1_{\Bbbk}$, i.e. $\phi(g,g)^{2}=1_{\Bbbk}$ for every $g\in G$, thus $\phi(g,g)\in\{\pm1_{\Bbbk}\}$. Thus $\bar{u}(G)\subseteq\{\pm1_{\Bbbk}\}$ and then we have that, with $\phi$ a commutation factor, $\bar{u}:G\to\{\pm1_{\Bbbk}\}$ is a morphism of groups. If we consider the subgroup
\[
H:=\mathrm{ker}(\bar{u})=\{g\in G\ |\ \bar{u}(g)=1_{\Bbbk}\}=\{g\in G\ |\ \phi(g,g)=1_{\Bbbk}\}
\]
the possibilities are two: $H=G$ or $\{\pm1_{\Bbbk}\}\cong G/H$ and then $|G/H|=2$. In the second case, if $G$ is finite (and so also $H$ is finite), then $|G|=2\cdot|H|$. In the finite case if the cardinality of $G$ is odd we always have $G=H$ and, with $G$ non trivial, if $H=\{1_{G}\}$ then $G=\mathbb{Z}_{2}$. Thus if the cardinality of $G$ is bigger than 2, $H$ can not be trivial and hence it never happens that $\phi(h,h)=-1_{\Bbbk}$ for every $h\not=1_{G}$. One can find, for example in \cite[Section 3.2]{a}, a complete classification of a nondegenerate skew-symmetric bicharacter defined on a finite abelian group.
\end{oss}

In \cite[Section 1.5]{article5} it is said how to obtain a braided strong monoidal functor from $\mathrm{Vec}_{G}$ to $\mathrm{Vec}_{\mathbb{Z}_{2}}$, in the case $G$ is a finite abelian group; this also works when $G$ is finitely generated abelian and we report here the proof for completeness. If we consider $\mathbb{Z}_{2}:=\{0,1\}$, by Remark \ref{u} we already have a morphism of groups $\bar{u}:G\to\mathbb{Z}_{2}$, where $\bar{u}(g)=0$ if $g\in G$ is such that $\phi(g,g)=1_{\Bbbk}$ and $\bar{u}(g)=1$ if $\phi(g,g)=-1_{\Bbbk}$. We need a normalized 2-cocycle $\gamma$ on $G$ such that \eqref{mon2} is satisfied for $\phi$ and $\eta$ where $\eta:\mathbb{Z}_{2}\times\mathbb{Z}_{2}\to\{\pm1_{\Bbbk}\}\subseteq\Bbbk-\{0\}$, $(x,y)\mapsto(-1)^{x\cdot y}$ is the commutation factor on $\mathbb{Z}_{2}$. 

\begin{prop}\label{braided strong}
    If $G$ is a finitely generated abelian group then we have a braided strong monoidal functor $(F,\phi^0,\phi^2):\mathrm{Vec}_{G}\to\mathrm{Vec}_{\mathbb{Z}_{2}}$.
\end{prop}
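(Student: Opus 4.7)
The plan is to apply Lemma \ref{braided}. For the morphism of groups I take $f:=\bar{u}:G\to\mathbb{Z}_{2}$ from Remark \ref{u}, which is already shown there to be a group morphism precisely because $\phi$ is a commutation factor. The task then reduces to exhibiting a normalized 2-cocycle $\gamma$ on $G$ satisfying
\[
\phi(g,h)=\eta(\bar{u}(g),\bar{u}(h))\frac{\gamma(g,h)}{\gamma(h,g)},\qquad g,h\in G.
\]
First I would set $\psi(g,h):=\phi(g,h)\eta(\bar{u}(g),\bar{u}(h))^{-1}$ and observe that $\psi$ is a bicharacter on $G$, being the quotient of two bicharacters (using that $\bar{u}$ is a group morphism). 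A short computation yields $\psi(g,g)=(-1)^{\bar{u}(g)}(-1)^{-\bar{u}(g)^{2}}=1_{\Bbbk}$ for every $g\in G$, since $\bar{u}(g)\in\{0,1\}$. Hence $\psi$ is a skew-symmetric bicharacter that is moreover alternating, i.e. trivial on the diagonal. Since every bicharacter is automatically a normalized 2-cocycle (as noted before Lemma \ref{braided}), it suffices to produce a bicharacter $\gamma$ on $G$ with $\gamma(g,h)\gamma(h,g)^{-1}=\psi(g,h)$.

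At this point I would invoke the structure theorem for finitely generated abelian groups: fix a decomposition $G=\bigoplus_{i=1}^{s}\langle e_{i}\rangle$ into cyclic subgroups of orders $n_{i}\in\mathbb{N}\cup\{\infty\}$, choose representatives $a_{i}$ for the coordinates, and define
\[
\gamma(e_{i},e_{j}):=\begin{cases}\psi(e_{i},e_{j}), & i<j,\\ 1_{\Bbbk}, & i\geq j,\end{cases}
\]
extending bimultiplicatively via $\gamma\bigl(\sum_{i}a_{i}e_{i},\sum_{j}b_{j}e_{j}\bigr):=\prod_{i,j}\gamma(e_{i},e_{j})^{a_{i}b_{j}}$. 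Well-definedness on the torsion factors amounts to $\gamma(e_{i},e_{j})^{n_{i}}=\gamma(e_{i},e_{j})^{n_{j}}=1_{\Bbbk}$ whenever the relevant order is finite, which follows from $\psi$ being a true bicharacter: $\psi(e_{i},e_{j})^{n_{i}}=\psi(n_{i}e_{i},e_{j})=1_{\Bbbk}$, while for $i\geq j$ the value is trivially $1_{\Bbbk}$. By construction $\gamma$ is then a bicharacter, hence a normalized 2-cocycle on $G$.

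Finally I would verify the compatibility \eqref{mon2}. Splitting the expansion $\psi(g,h)=\prod_{i,j}\psi(e_{i},e_{j})^{a_{i}b_{j}}$ according to $i<j$, $i>j$ and $i=j$, and using $\psi(e_{i},e_{i})=1_{\Bbbk}$ together with the skew-symmetry $\psi(e_{j},e_{i})=\psi(e_{i},e_{j})^{-1}$, the expression collapses to $\prod_{i<j}\psi(e_{i},e_{j})^{a_{i}b_{j}-a_{j}b_{i}}$, which is exactly $\gamma(g,h)\gamma(h,g)^{-1}$ as read off from the definition of $\gamma$. Thus the compatibility holds and Lemma \ref{braided} delivers the required braided strong monoidal functor $(F,\phi^{0},\phi^{2}):\mathrm{Vec}_{G}\to\mathrm{Vec}_{\mathbb{Z}_{2}}$.

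The main obstacle, as indicated, is the torsion part of $G$: the bimultiplicative extension of $\gamma$ from the chosen cyclic generators must descend through the relations $n_{i}e_{i}=0_{G}$, and this is secured precisely because $\psi$ itself is a genuine bicharacter on $G$ and not merely a cocycle; the finitely generated hypothesis is used exactly to reduce to a finite ordered set of cyclic generators on which $\gamma$ can be specified by an upper-triangular recipe.
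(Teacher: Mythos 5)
Your argument is correct, and it reaches the same reduction as the paper — your $\psi(g,h)=\phi(g,h)\,\eta(\bar{u}(g),\bar{u}(h))^{-1}$ is exactly the commutation factor $\phi\kappa$ used in the paper (since $\kappa=\eta\circ(\bar{u}\times\bar{u})$ takes values in $\{\pm1_{\Bbbk}\}$), and both proofs conclude via Lemma \ref{braided} with $f=\bar{u}$. Where you genuinely diverge is in producing the $2$-cocycle $\gamma$ with $\psi(g,h)=\gamma(g,h)\gamma(h,g)^{-1}$. The paper first passes to the quotient $G'=G/\mathrm{ker}(\chi)$ so that $\phi\kappa$ descends to a non-degenerate commutation factor $\phi'$ trivial on the diagonal, invokes Scheunert's result \cite[Lemma 2]{Sc} on $G'$ to get a bicharacter $\gamma'$, and pulls it back along the projection. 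You instead construct $\gamma$ directly on $G$ by fixing a cyclic decomposition $G=\bigoplus_{i}\langle e_{i}\rangle$ and defining $\gamma$ upper-triangularly on the generators, checking well-definedness on the torsion factors via $\psi(e_i,e_j)^{n_i}=\psi(n_ie_i,e_j)=1_{\Bbbk}$ and verifying the identity $\psi(g,h)=\prod_{i<j}\psi(e_i,e_j)^{a_ib_j-a_jb_i}=\gamma(g,h)\gamma(h,g)^{-1}$ using $\psi(e_i,e_i)=1_{\Bbbk}$ and skew-symmetry. This is in effect an inlined proof of Scheunert's lemma, which makes your argument self-contained and avoids the detour through $G'$ and non-degeneracy; the paper's version is shorter on the page but outsources the essential combinatorial step to the reference. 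Both uses of the finitely generated hypothesis are the honest one: you need a finite ordered set of cyclic generators (equivalently, the group to which Scheunert's lemma applies), and your closing remark correctly identifies that it is the bicharacter property of $\psi$ — not merely the cocycle condition — that lets the upper-triangular definition descend through the relations $n_ie_i=0_{G}$.
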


\begin{proof}
Define $\kappa:G\times G\to\{\pm1_{\Bbbk}\}$ such that $\kappa(g,h)=1_{\Bbbk}$ if $\phi(g,g)=1_{\Bbbk}$ or $\phi(h,h)=1_{\Bbbk}$ and $\kappa(g,h)=-1_{\Bbbk}$ if $\phi(g,g)=\phi(h,h)=-1_{\Bbbk}$, for every $g,h\in G$. By definition of $\kappa$, which is clearly a commutation factor, $\phi\kappa:G\times G\to\Bbbk-\{0\},(g,h)\mapsto\phi(g,h)\kappa(g,h)$ is a commutation factor on $G$. Observe that $\phi\kappa(g,g)=1_{\Bbbk}$ for every $g\in G$. If we consider $\hat{G}:=\text{Hom}_{\text{Grp}}(G,\Bbbk-\{0\})$, the character group of $G$, and we define the group morphism $\chi:G\to\hat{G}$, $g\mapsto\chi_g$ by setting $\chi_{g}(h)=\phi\kappa(g,h)$ for $g,h\in G$, we have that $\phi\kappa$ induces a non-degenerate bicharacter $\phi'$ on the group $G':=G/\mathrm{ker}(\chi)$ such that $\phi'(\bar{g},\bar{g})=1_{\Bbbk}$ for every $\bar{g}\in G'$. In fact we can define $\phi'(\bar{g},\bar{g'})=\phi\kappa(g,g')$ for every $\bar{g},\bar{g'}\in G'$, which is well defined since, for $g,g'\in G$ and $h,h'\in\mathrm{ker}(\chi)$, we have that
\[
\phi\kappa(h,g')=\phi\kappa(h,h')=1_{\Bbbk}\ \text{ and }\ 1_{\Bbbk}=\phi\kappa(g,h')\phi\kappa(h',g)=\phi\kappa(g,h')
\]
and then $\phi\kappa(gh,g'h')=\phi\kappa(g,g')$. Clearly $\phi'$ is a commutation factor and $\phi'(\bar{g},\bar{g})=1_{\Bbbk}$ for every $\bar{g}\in G'$. 
Since $G$ is a finitely generated abelian group the same is true for $G'$. Thus by \cite[Lemma 2]{Sc}, since $\phi'$ is a commutation factor on $G'$ such that $\phi'(\bar{g},\bar{g})=1_{\Bbbk}$ for every $\bar{g}\in G'$, there exists a bicharacter $\gamma':G'\times G'\to\Bbbk-\{0\}$ on $G'$ (so, in particular, a normalized 2-cocycle on $G'$) such that
\[
\phi'(\bar{g},\bar{h})=\frac{\gamma'(\bar{g},\bar{h})}{\gamma'(\bar{h},\bar{g})}\ \text{ for every }\ \bar{g},\bar{h}\in G'.
\]
If we call $p:G\to G'$ the canonical projection, we have that $\phi'\circ(p\times p)=\phi\kappa$ and we can consider the bicharacter (so a normalized 2-cocycle) on $G$ given by $\gamma:=\gamma'\circ(p\times p)$. Clearly
\[
\phi\kappa(g,h)=\phi'(\bar{g},\bar{h})=\frac{\gamma'(\bar{g},\bar{h})}{\gamma'(\bar{h},\bar{g})}=\frac{\gamma(g,h)}{\gamma(h,g)}\ \text{ for every }g,h\in G.
\]
But now, given $\eta$ the bicharacter on $\mathbb{Z}_{2}$  of before, we have that $\kappa=\eta\circ(\bar{u}\times\bar{u})$, i.e. $\kappa(g,h)=\eta(\bar{u}(g),\bar{u}(h))$ for every $g,h\in G$. Thus we obtain that 
\[
\frac{\gamma(g,h)}{\gamma(h,g)}=\phi\kappa(g,h)=\phi(g,h)\kappa(g,h)=\phi(g,h)\eta(\bar{u}(g),\bar{u}(h))\ \text{ for every }\ g,h\in G
\]
and then that
\[
\phi(g,h)=\eta(\bar{u}(g),\bar{u}(h))\frac{\gamma(g,h)}{\gamma(h,g)}\ \text{ for every }\ g,h\in G.
\]
Hence, by Lemma \ref{braided}, we have a braided strong monoidal functor $(F,\phi^{2},\phi^{0}):\mathrm{Vec}_{G}\to\mathrm{Vec}_{\mathbb{Z}_{2}}$. 
\end{proof}

Thus, from now on, we suppose that the abelian group $G$ is $\textit{finitely generated}$. We know that a braided strong monoidal functor preserves Hopf monoids (see \cite[Propositions 3.46, 3.50]{l}), thus, via $(F,\phi^{0},\phi^{2})$, every color Hopf algebra becomes a super Hopf algebra and every morphism of color Hopf algebras becomes a morphism of super Hopf algebras (we already know that it is automatically in Vec$_{\mathbb{Z}_{2}}$, so it will be also a morphism of algebras and of coalgebras with respect to new products and new coproducts). Given a color Hopf algebra $(H:=\bigoplus_{g\in G}{H_{g}},m,u,\Delta,\epsilon,S)$, the super Hopf algebra will be given by 
\[
(F(H),F(m)\circ(\phi^{2}_{H,H})^{-1},F(u)\circ(\phi^0)^{-1},\phi^{2}_{H,H}\circ F(\Delta),\phi^0\circ F(\epsilon),F(S))=(H_{0}\oplus H_{1},m\circ(\phi^{2}_{H,H})^{-1},u,\phi^{2}_{H,H}\circ\Delta,\epsilon,S) 
\]
since $F(f)=f$ for $f$ in Vec$_{G}$ and $\phi^0$ is the identity, where $H_{0}=\bigoplus_{g\in G_{0}}{H_{g}}$ and $H_{1}=\bigoplus_{g\in G_{1}}{H_{g}}$ by setting $G_{0}:=\{g\in G\ |\ \phi(g,g)=1_{\Bbbk}\}$ and $G_{1}:=\{g\in G\ |\ \phi(g,g)=-1_{\Bbbk}\}$. 

\begin{lem}
Given a faithful braided strong monoidal functor $F:\Mm\to\Mm'$, then $A$ is a (co)commutative (co)monoid in $\Mm$ if and only if $F(A)$ is a (co)commutative (co)monoid in $\Mm'$.
\end{lem}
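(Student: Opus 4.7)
The plan is to reduce both directions to the braided compatibility identity
\[
c'_{F(A),F(A)}\circ\phi^2_{A,A}=\phi^2_{A,A}\circ F(c_{A,A}),
\]
combined with the invertibility of $\phi^2_{A,A}$ (since $F$ is strong) and the faithfulness of $F$. Faithfulness is needed only to transport equalities from $\Mm'$ back to $\Mm$; the forward implication uses only functoriality.

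I would treat the commutative monoid case first. If $(A,m,u)$ is a monoid in $\Mm$, the induced multiplication on $F(A)$ in $\Mm'$ is $m_{F(A)}=F(m)\circ(\phi^2_{A,A})^{-1}$. The commutativity condition $m_{F(A)}\circ c'_{F(A),F(A)}=m_{F(A)}$ then reads
\[
F(m)\circ(\phi^2_{A,A})^{-1}\circ c'_{F(A),F(A)}=F(m)\circ(\phi^2_{A,A})^{-1}.
\]
The braided identity rewrites $(\phi^2_{A,A})^{-1}\circ c'_{F(A),F(A)}$ as $F(c_{A,A})\circ(\phi^2_{A,A})^{-1}$, and cancelling the isomorphism $(\phi^2_{A,A})^{-1}$ on the right reduces the condition to $F(m\circ c_{A,A})=F(m)$. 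Faithfulness of $F$ then yields $m\circ c_{A,A}=m$, i.e. commutativity of $A$ in $\Mm$, and since every step is an equivalence the converse is automatic.

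The cocommutative comonoid case is formally dual: for $(C,\Delta,\epsilon)\in\text{Comon}(\Mm)$ the induced comultiplication on $F(C)$ is $\Delta_{F(C)}=\phi^2_{C,C}\circ F(\Delta)$, and cocommutativity $c'_{F(C),F(C)}\circ\Delta_{F(C)}=\Delta_{F(C)}$ becomes, via the same braided identity, $\phi^2_{C,C}\circ F(c_{C,C}\circ\Delta)=\phi^2_{C,C}\circ F(\Delta)$. Invertibility of $\phi^2_{C,C}$ and faithfulness of $F$ then give $c_{C,C}\circ\Delta=\Delta$.

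I do not expect any serious obstacle: the whole argument is a careful bookkeeping of how the structural isomorphisms $\phi^2$ interact with the braidings, with faithfulness used purely to reflect equalities between morphisms. The only point requiring some care is that the monoid structure is transported using $(\phi^2)^{-1}$ while the comonoid structure uses $\phi^2$, so the two cases are formally dual but not literally identical in the order in which the isomorphisms appear; in both cases, however, the cancellation leaves a single equation in $\Mm'$ of the form $F(f)=F(g)$ to which faithfulness applies.
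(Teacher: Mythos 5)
Your argument is correct and matches the paper's proof essentially line for line: both reduce to the braided-functor identity $c'_{F(A),F(A)}\circ\phi^2_{A,A}=\phi^2_{A,A}\circ F(c_{A,A})$, use invertibility of $\phi^2_{A,A}$ to cancel, and invoke faithfulness only for the reverse implication. No gaps.
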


\begin{proof}
Since $F$ is braided we have that
\[
m_{F(A)}\circ c'_{F(A),F(A)}=F(m_{A})\circ(\phi^2_{A,A})^{-1}\circ c'_{F(A),F(A)}=F(m_{A}\circ c_{A,A})\circ(\phi^2_{A,A})^{-1} 
\]
so that if $A$ is a commutative monoid in $\Mm$ then $F(A)$ is a commutative monoid in $\Mm'$ and
\[
c'_{F(A),F(A)}\circ\Delta_{F(A)}=c'_{F(A),F(A)}\circ\phi^2_{A,A}\circ F(\Delta_{A})=\phi^2_{A,A}\circ F(c_{A,A}\circ\Delta_{A})
\]
so that if $A$ is a cocommutative comonoid in $\Mm$ then $F(A)$ is a cocommutative comonoid in $\Mm'$. 
\begin{invisible}
If $A$ commutative, i.e. $m_{A}=m_{A}\circ c_{A,A}$, then $F(A)$ is commutative, indeed
\[
F(m_{A})\circ(\phi^2_{A,A})^{-1}\circ c'_{F(A),F(A)}=F(m_{A})\circ F(c_{A,A})\circ(\phi^2_{A,A})^{-1}=F(m_{A})\circ(\phi^2_{A,A})^{-1}, 
\]
i.e. $m_{F(A)}\circ c'_{F(A),F(A)}=m_{F(A)}$, so $F(A)$ is commutative. Vice versa, if $F(B)$ is commutative, then 
\[
F(m_{B})\circ F(c_{B,B})=m_{F(B)}\circ\phi^2_{B,B}\circ F(c_{B,B})=m_{F(B)}\circ c'_{F(B),F(B)}\circ\phi^2_{B,B}=m_{F(B)}\circ\phi^2_{B,B}=F(m_{B})
\]
and then, since $F$ is faithful, we obtain $m_{B}\circ c_{B,B}=m_{B}$, i.e. $B$ is commutative. Similarly for the cocommutative case.
\end{invisible}
If $F(A)$ is a (co)commutative (co)monoid in $\Mm'$ then, from the previous two computations, we obtain that $A$ is a (co)commutative (co)monoid in $\Mm$ by using that $F$ is faithful.
\end{proof}

\begin{cor}\label{commutative}
Given $A$ in $\Hc$, it is (co)commutative if and only if $F(A)$ in $\mathrm{Hopf}_{\mathrm{coc}}(\mathrm{Vec}_{\mathbb{Z}_2})$ is (co)commutative.
\end{cor}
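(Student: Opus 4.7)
The plan is to recognize Corollary \ref{commutative} as an immediate packaging of the preceding lemma applied to the braided strong monoidal functor $(F,\phi^0,\phi^2):\mathrm{Vec}_G \to \mathrm{Vec}_{\mathbb{Z}_2}$ constructed in Proposition \ref{braided strong}. The lemma produces the stated equivalence as soon as we know $F$ is faithful, so the only new ingredient to supply is faithfulness.

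Faithfulness is essentially automatic from the construction in Remark \ref{F}: on morphisms, $F$ acts as the identity (it merely relabels the grading via pushforward along $\bar{u}:G\to\mathbb{Z}_2$), so distinct graded linear maps in $\mathrm{Vec}_G$ remain distinct in $\mathrm{Vec}_{\mathbb{Z}_2}$. I would state this in one line, citing Remark \ref{F}.

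With faithfulness in hand, I would apply the lemma twice. Since $A\in\Hc$ is in particular a cocommutative bimonoid in $\mathrm{Vec}_G$, the "commutative monoid" half of the lemma gives: $A$ is commutative (as a monoid in $\mathrm{Vec}_G$, equivalently $ab=\phi(|a|,|b|)ba$) if and only if $F(A)$, with the induced product $F(m)\circ(\phi^2_{A,A})^{-1}$, is a commutative monoid in $\mathrm{Vec}_{\mathbb{Z}_2}$, i.e.\ a commutative super algebra. Dually, the "cocommutative comonoid" half gives the statement for cocommutativity, with the induced coproduct $\phi^2_{A,A}\circ F(\Delta)$. Since $F(S)=S$, the super Hopf algebra structure on $F(A)$ is exactly the one described just before the lemma.

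No serious obstacle is expected: the content is entirely carried by the previous lemma and Proposition \ref{braided strong}. The only subtlety worth flagging is that the (co)monoid structure on $F(A)$ used in the lemma must be the one obtained by conjugating $F(m)$ and $F(\Delta)$ with $\phi^2_{A,A}$, but this is exactly how a strong monoidal functor transports (co)monoid structures, so the match is automatic.
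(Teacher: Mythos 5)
Your proposal is correct and follows essentially the same route as the paper: Corollary \ref{commutative} is an immediate application of the preceding lemma to the functor $F$ of Proposition \ref{braided strong}, whose faithfulness is clear since $F(f)=f$ on morphisms by Remark \ref{F}. The paper gives no separate argument for the corollary beyond this, so your packaging (including the remark that the super Hopf structure on $F(A)$ is the one transported via $\phi^{2}$) matches the intended proof.
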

\begin{invisible}
Clearly, if we consider a color Hopf algebra $H$ which is cocommutative, i.e. such that $c_{H,H}\circ\Delta=\Delta$, then $F(H)$ will be a cocommutative super Hopf algebra, indeed 
\[
c'_{F(H),F(H)}\circ\phi^2_{H,H}\circ F(\Delta)=\phi^2_{H,H}\circ F(c_{H,H})\circ F(\Delta)=\phi^2_{H,H}\circ F(c_{H,H}\circ\Delta)=\phi^2_{H,H}\circ F(\Delta)
\]
since $F$ is braided, where we denote by $c$ and $c'$ the braidings of Vec$_{G}$ and Vec$_{\mathbb{Z}_{2}}$, respectively. Similarly, if $H$ is a commutative color Hopf algebra then $F(H)$ is a commutative super Hopf algebra. 
\end{invisible}
In the following we will often refer to the functor $F$ restricted to the category of cocommutative color Hopf algebras, still calling it $F$. 
\begin{invisible}
Observe that, explicitly, the product and the coproduct of $F(H)$ computed on elements and referred to the product and the coproduct of $H$ are
\[
x\cdot_{F(H)}y:=\gamma(|x|,|y|)^{-1}xy\ \text{ and }\ \Delta_{F(H)}(x):=\gamma(|x_{1}|,|x_{2}|)x_{1}\otimes x_{2}\ \text{ for every }x,y\in F(H)
\]
for a given normalized 2-cocycle $\gamma$ over $G$, for which we use the same convention that we use for the bicharacter $\phi$. We will write $\Delta_{F(H)}(x)=\bar{x}_{1}\otimes\bar{x}_{2}$ and $x\cdot_{F(H)}y=x\cdot y$ in the computations.
\end{invisible}

\begin{oss}
In order to avoid confusion, here we denote $\mathrm{Vec}_{G}^{\phi}$ by indicating the bicharacter associated to the braiding at the top. As it is said in \cite[Section 1.5]{article5}, the normalized 2-cocycle $\gamma:G\times G\to\Bbbk-\{0\}$ induces an equivalence of braided monoidal categories from $\mathrm{Vec}_{G}^{\phi\kappa}$ to $\mathrm{Vec}_{G}^{t}$, where $t:G\times G\to\Bbbk-\{0\}$ is the trivial bicharacter such that $t(g,h)=1_{\Bbbk}$ for every $g,h\in G$. Indeed, if we consider the morphism of groups $\mathrm{Id}_{G}$ and $\gamma$, we have
\[
\phi\kappa(g,h)=t(g,h)\frac{\gamma(g,h)}{\gamma(h,g)}\ \text{ for every }g,h\in G
\]
and then a braided strong monoidal functor $\mathrm{Vec}_{G}^{\phi\kappa}\to\mathrm{Vec}_{G}^{t}$ by Lemma \ref{braided}. But now, clearly, we can consider the normalized 2-cocycle $\gamma^{-1}:G\times G\to\Bbbk-\{0\}$, $(g,h)\to\gamma(g,h)^{-1}$ and then
\[
t(g,h)=\phi\kappa(g,h)\frac{\gamma(h,g)}{\gamma(g,h)}=\phi\kappa(g,h)\frac{\gamma^{-1}(g,h)}{\gamma^{-1}(h,g)}\ \text{ for every }\ g,h\in G
\]
so that we have a braided strong monoidal functor $\mathrm{Vec}_{G}^{t}\to\mathrm{Vec}_{G}^{\phi\kappa}$, again by Lemma \ref{braided}. These two functors give an equivalence of (symmetric) braided monoidal categories between $\mathrm{Vec}_{G}^{\phi\kappa}$ and $\mathrm{Vec}_{G}^{t}$ 
\begin{invisible}
This means, in particular, that given a Hopf monoid in $\mathrm{Vec}_{G}^{\phi\kappa}$ one can obtain a Hopf monoid in $\mathrm{Vec}_{G}^{t}$ and vice versa 
\end{invisible}
and now the possibilities are two. If we have that $\phi(g,g)=1_{\Bbbk}$ for every $g\in G$, then clearly $\kappa(g,h)=1_{\Bbbk}$ for every $g,h\in G$ and then $\phi\kappa=\phi$, so that we have an equivalence of symmetric monoidal categories between Vec$_{G}^{\phi}$ and Vec$_{G}^{t}$. Indeed observe that, in this case, $G_{0}=G$ and then, given $V$ in Vec$_{G}^{\phi}$, we have $V=V_{0}$ and $V_{1}=0$ and $\eta(0,0)=1_{\Bbbk}$. The objects of Hopf$_{\mathrm{coc}}(\mathrm{Vec}_{G}^{t})$, the category of $G$-graded cocommutative Hopf algebras, are ordinary cocommutative Hopf algebras graded over $G$ as vector spaces and with $m,u,\Delta,\epsilon,S$ which preserve gradings (thus $G$-graded algebras and coalgebras) and morphisms are algebra and coalgebra maps which preserve gradings. In particular from a cocommutative color Hopf algebra we can obtain an ordinary cocommutative Hopf algebra and vice versa. Otherwise if $\phi(g,g)=-1_{\Bbbk}$ for some $g\in G$, we can return to the braided strong monoidal functor $(F,\phi^{0},\phi^{2}):\mathrm{Vec}_{G}^{\phi}\to\mathrm{Vec}_{\mathbb{Z}_{2}}^{\eta}$ of before and, given $H$ and $f$ in Hopf$_{\mathrm{coc}}(\mathrm{Vec}_{G}^{\phi})$, we obtain $F(H)$ and $F(f)$ in Hopf$_{\mathrm{coc}}(\mathrm{Vec}_{G}^{\eta})$, the category of $G$-graded cocommutative super Hopf algebras, where objects are $G$-graded algebras and coalgebras (since also $\phi^2_{H,H}$ is in Vec$_{G}$), then also $\mathbb{Z}_{2}$-graded algebras and coalgebras by considering the new grading, with respect to which, considering the braiding of super vector spaces, they are cocommutative super Hopf algebras and morphisms are algebra and coalgebra maps which preserve the $G$-grading (and then that over $\mathbb{Z}_{2}$). 
\end{oss}

Hence every $H$ in $\Hc$ can be seen as a cocommutative super Hopf algebra. If $\phi(g,g)=1_{\Bbbk}$ for every $g\in G$ we have that this is effectively an ordinary cocommutative Hopf algebra and Newman's Theorem holds true; this always happens if we have a finite group $G$ of odd cardinality by Remark \ref{u}, for example. If $\phi(g,g)=-1_{\Bbbk}$ for some $g\in G$ we can use the more general Theorem \ref{ma} for cocommutative super Hopf algebras, where char$\Bbbk\not=2$ is needed, which allows us to deal with the more general case. 

\subsection{Generalized Newman's theorem for color Hopf algebras}

Now we can generalize Theorem \ref{ma} and Corollary \ref{c1} to the case of cocommutative color Hopf algebras by using the functor $F:\mathrm{Vec}_{G}\to\mathrm{Vec}_{\mathbb{Z}_{2}}$, in the case char$\Bbbk\not=2$ and $G$ is a finitely generated abelian group. 

\begin{lem}\label{injsub}
    The forgetful functor $K:\mathrm{Vec}_{G}\to\mathrm{Vec}_{\Bbbk}$ is injective on subobjects and on quotients of the same object. As a consequence, the same holds true if $K$ is restricted to the categories $\mathrm{Mon}(\mathrm{Vec}_{G})$, $\mathrm{Comon}(\mathrm{Vec}_{G})$, $\mathrm{Bimon}(\mathrm{Vec}_{G})$ and $\mathrm{Hopf}(\mathrm{Vec}_{G})$. 
\end{lem}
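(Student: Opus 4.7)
The plan is to prove the statement first for $K\colon\mathrm{Vec}_{G}\to\mathrm{Vec}_{\Bbbk}$ and then to transport it to the four algebraic categories by invoking the concrete descriptions of sub- and quotient objects already recorded earlier in the paper.

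For the base case, I would begin by recalling that monomorphisms in both $\mathrm{Vec}_{G}$ and $\mathrm{Vec}_{\Bbbk}$ are exactly the injective linear maps, so a subobject of $V\in\mathrm{Vec}_{G}$ may be represented by a graded subspace $V'\subseteq V$, and a subobject of $K(V)$ by a vector subspace of $V$. The decisive observation is the uniqueness of the grading on $V'$: if $x=\sum_{g\in G}x_{g}\in V'$ with $x_{g}\in V_{g}$, then the graded-subspace condition forces $x_{g}\in V'$ for every $g\in G$, so necessarily $V'_{g}=V'\cap V_{g}$. Therefore two graded subspaces of $V$ sharing the same underlying vector subspace coincide as objects of $\mathrm{Vec}_{G}$, which proves that $K$ is injective on subobjects. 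Dually, any regular epimorphism $\pi\colon V\to W$ in $\mathrm{Vec}_{G}$ is determined up to unique isomorphism by the graded subspace $\mathrm{ker}(\pi)\subseteq V$ (see Remark \ref{Im and qu}), so that the injectivity on quotients of $V$ is a formal consequence of the subobject case together with the isomorphism theorem for graded vector spaces.

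To deduce the statement for $\mathrm{Mon}(\mathrm{Vec}_{G})$, $\mathrm{Comon}(\mathrm{Vec}_{G})$, $\mathrm{Bimon}(\mathrm{Vec}_{G})$ and $\mathrm{Hopf}(\mathrm{Vec}_{G})$, I would invoke the explicit descriptions recalled earlier in the paper: a graded subalgebra (respectively subcoalgebra, sub-bialgebra, color Hopf subalgebra) of $A$ is a graded subspace of $A$ closed under the relevant operations, and its structure is the unique restriction of that of $A$. Composing each forgetful functor to $\mathrm{Vec}_{G}$ with $K$ and applying the $\mathrm{Vec}_{G}$ case, any two subobjects of $A$ with the same underlying vector subspace of $K(A)$ must coincide as graded subspaces and hence as objects of the richer category. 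For quotients the same reasoning applies: quotients by graded two-sided (co)ideals, bi-ideals or Hopf ideals are determined by the underlying graded ideal, which is itself a subobject of $A$ in $\mathrm{Vec}_{G}$, so the quotient case in each algebraic category follows from the subobject case in $\mathrm{Vec}_{G}$.

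The argument is essentially formal; the only (mild) obstacle is to verify uniformly that sub- and quotient objects in $\mathrm{Mon}(\mathrm{Vec}_{G})$, $\mathrm{Comon}(\mathrm{Vec}_{G})$, $\mathrm{Bimon}(\mathrm{Vec}_{G})$ and $\mathrm{Hopf}(\mathrm{Vec}_{G})$ are completely determined by the underlying graded subspace or graded (co/bi)ideal, a fact that is already established in the earlier subsections on graded (co)algebras, color bialgebras and color Hopf algebras.
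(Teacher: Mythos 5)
Your proposal is correct and follows essentially the same route as the paper: the grading of a graded subspace is forced to be $V'_{g}=V'\cap V_{g}$, quotients are pinned down by their (graded) kernels, and the algebraic cases follow because sub- and quotient structures are restrictions, respectively induced maps, of the ambient structure. The only cosmetic difference is that the paper identifies the kernel directly as the zero coset ($B=0_{A/B}$) to get strict equality of quotients, whereas your phrase ``determined up to unique isomorphism'' should be sharpened to literal equality, which your kernel argument does in fact deliver.
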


\begin{proof}
    Given $A$ in Vec$_{G}$ and $B$, $C$ graded subspaces of $A$, then $B_{g}=B\cap A_{g}$ and $C_{g}=C\cap A_{g}$ for every $g\in G$. Thus, if $K(B)=K(C)$, i.e. $B$ and $C$ are the same vector space, then they must be the same object in Vec$_{G}$. Furthermore, if we consider $A$ in Mon$(\mathrm{Vec}_{G})$, Comon$(\mathrm{Vec}_{G})$, Bimon$(\mathrm{Vec}_{G})$ or Hopf$(\mathrm{Vec}_{G})$ and $B$, $C$ subobjects of $A$ in these categories, we have that $B$ and $C$ are the same object in these categories if and only if they are the same object in Vec$_{G}$ because their operations are the restrictions of those of $A$ and then this happens if $K(B)=K(C)$. Moreover, given $A/B$ and $A/C$ in Vec$_{G}$ such that $K(A/B)=K(A/C)$, i.e. $A/B$ and $A/C$ are the same vector space, then $B=0_{A/B}=0_{A/C}=C$. As a consequence, $(A/B)_{g}=(A_{g}+B)/B=(A_{g}+C)/C=(A/C)_{g}$ for every $g\in G$ and then $A/B$ and $A/C$ are the same object in Vec$_{G}$. The same result holds true when $A/B$ and $A/C$ are in Mon$(\mathrm{Vec}_{G})$, Comon$(\mathrm{Vec}_{G})$, Bimon$(\mathrm{Vec}_{G})$ or Hopf$(\mathrm{Vec}_{G})$, since $A/B$ and $A/C$ are the same object in these categories if and only if they are the same object in Vec$_{G}$ because their operations are induced by those of $A$ through the canonical projection.
\end{proof}

\begin{lem}\label{sub}
    The functor $F:\mathrm{Vec}_{G}\to\mathrm{Vec}_{\mathbb{Z}_{2}}$ preserves and reflects submonoids and subcomonoids, then sub-bimonoids and Hopf submonoids.
\end{lem}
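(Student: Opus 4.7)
The plan is to exploit the fact that, by its construction from a pushforward of grading, $F$ acts as the identity on underlying vector spaces and on morphisms, and that the monoidality constraint $\phi^{2}_{V,W}$ is given on a homogeneous tensor by the nonzero scalar $\gamma(|v|,|w|)$ times the identity (with $\phi^{0}=\mathrm{Id}_{\Bbbk}$). Consequently, a graded subspace $V'\subseteq V$ in $\mathrm{Vec}_{G}$ is literally the same linear subspace as $F(V')\subseteq F(V)$ in $\mathrm{Vec}_{\mathbb{Z}_{2}}$, merely regarded with the pushed-forward $\mathbb{Z}_{2}$-grading, and similarly for any tensor power.

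For submonoids, the transported structure maps on $F(M)$ read $u_{F(M)}=F(u_{M})\circ(\phi^{0})^{-1}=u_{M}$ and $m_{F(M)}=F(m_{M})\circ(\phi^{2}_{M,M})^{-1}$, so that $m_{F(M)}(x\otimes y)=\gamma(|x|,|y|)^{-1}m_{M}(x\otimes y)$ on homogeneous elements. Since $\gamma$ takes values in $\Bbbk-\{0\}$, the subspace $M'$ is closed under $m_{M}$ if and only if it is closed under $m_{F(M)}$, and $1_{F(M)}=1_{M}$ makes the unit condition identical on both sides. Applying the same argument to $\Delta_{F(C)}=\phi^{2}_{C,C}\circ F(\Delta_{C})$, which differs from $\Delta_{C}$ only by the invertible scalars $\gamma(|x_{1}|,|x_{2}|)$, gives that $\Delta_{C}(C')\subseteq C'\otimes C'$ is equivalent to $\Delta_{F(C)}(C')\subseteq C'\otimes C'$, while the condition $\epsilon_{C}(C')\subseteq\Bbbk$ is automatic and unaffected by $F$. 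Hence $F$ both preserves and reflects submonoid and subcomonoid inclusions.

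The consequences for sub-bimonoids and Hopf submonoids follow at once: a sub-bimonoid is precisely a graded subspace that is simultaneously a submonoid and a subcomonoid, and a Hopf submonoid adds only the closure condition $S(H')\subseteq H'$, which transfers trivially in both directions because $F(S)=S$ as a linear map. There is no serious obstacle here; the argument is a bookkeeping exercise resting on the invertibility of $\gamma$. The only genuine verification is that $\phi^{2}_{V,W}$ restricts to an isomorphism $V'\otimes W'\to V'\otimes W'$ whenever $V'\subseteq V$ and $W'\subseteq W$ are graded subspaces, which is immediate from its scalar action on homogeneous tensors.
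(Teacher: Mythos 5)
Your proposal is correct and follows essentially the same route as the paper: both arguments rest on the observation that $F$ leaves the underlying vector space and the linear maps unchanged, and that $\phi^{2}$ acts by invertible scalars on homogeneous tensors, hence restricts to an automorphism of $C'\otimes C'$ for any graded subspace $C'$, so closure under the transported structure maps is equivalent to closure under the original ones. The only cosmetic difference is that the paper treats the Hopf submonoid case directly as subsuming the others, while you verify the monoid and comonoid conditions separately first.
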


\begin{proof}
We consider the case of Hopf submonoids which includes all the others in itself. We already know that if $C$ is a color Hopf subalgebra of $A$, i.e. the inclusion $i:C\to A$ is in $\Hc$, then $F(i):F(C)\to F(A)$ is in $\mathrm{Hopf}_{\mathrm{coc}}(\mathrm{Vec}_{\mathbb{Z}_{2}})$, i.e. $F(C)$ is a super Hopf subalgebra of $F(A)$, so we show the other direction, assuming $A$ in $\Hc$ and $C\subseteq A$ a graded subspace such that $F(C)$ is a super Hopf subalgebra of $F(A)$. But now we only have to observe that
\[
m_{F(A)}(F(C)\otimes F(C))=F(m_{A})(\phi^2_{A,A})^{-1}(F(C)\otimes F(C))=F(m_{A})(F(C\otimes C))=F(m_{A}(C\otimes C)),
\]
\[
\Delta_{F(A)}(F(C))=\phi^2_{A,A}F(\Delta_{A})(F(C))=\phi^2_{A,A}F(\Delta_{A}(C)) 
\]
and also $u_{F(A)}(\Bbbk)=F(u_{A}(\Bbbk))$, $S_{F(A)}(F(C))=F(S_{A}(C))$ so that, since $F(C)$ is a super Hopf subalgebra of $F(A)$ (i.e. $F(C)$ is closed under the operations of $F(A)$), then $C$ is a color Hopf subalgebra of $A$ (i.e. $C$ is closed under the operations of $A$), since $F$ does not change the structure of vector space.
\begin{invisible}
We can also see this as: if an inclusion $i:C\to A$ is in $\Hc$ then $F(i):F(C)\to F(A)$ is in $\mathrm{Hopf}_{\mathrm{coc}}(\mathrm{Vec}_{\mathbb{Z}_{2}})$ and viceversa.
In fact, for example, if $F(i)\circ m_{F(C)}=m_{F(A)}\circ(F(i)\otimes F(i))$ then $F(i\circ m_{C})\circ(\phi^2_{C,C})^{-1}=F(m_{A}\circ(i\otimes i))\circ(\phi^2_{C,C})^{-1}$ and $F$ is the identity on morphisms.
\end{invisible}
\end{proof}

\begin{oss}\label{quotient}
Observe that, given $\pi:A\to A/I$ in Vec$_{G}$, then $F(\pi):F(A)\to F(A/I)$ is in Vec$_{\mathbb{Z}_2}$ and it is still surjective, then by Remark \ref{Im and qu} $F(A/I)$ has the unique grading induced by $F(A)$ through the surjection, i.e. $F(A/I)_{i}=F(\pi)(F(A)_{i})=(F(A)_{i}+I)/I=(F(A)_{i}+F(I))/F(I)$ for $i=0,1$ and this is exactly the grading in Vec$_{\mathbb{Z}_2}$ of the quotient of $F(A)$ with its super subspace $F(I)$. Thus 
\begin{invisible}
Recall that, given $A$ in Vec$_{G}$ and $I\subseteq A$ a graded subspace, then $A/I=\bigoplus_{g\in G}{A_{g}/A_{g}\cap I}$ in Vec$_{G}$. So we have that
\[
F(A/I)=\big(\bigoplus_{g\in G_{0}}{\frac{A_{g}}{A_{g}\cap I}}\big)\oplus\big(\bigoplus_{g\in G_{1}}{\frac{A_{g}}{A_{g}\cap I}}\big)\cong\frac{\bigoplus_{g\in G_{0}}{A_{g}}\oplus\bigoplus_{g\in G_{1}}{A_{g}}}{\bigoplus_{g\in G_{0}}{A_{g}\cap I}\oplus\bigoplus_{g\in G_{1}}{A_{g}\cap I}}=\frac{F(A)}{F(I)}
\]
as vector spaces through a canonical isomorphism, so when we consider $F(A)/F(I)$ in Vec$_{\mathbb{Z}_2}$ (quotient of $F(A)$ in Vec$_{\mathbb{Z}_2}$ with $F(I)\subseteq F(A)$ super subspace) we are meaning exactly $F(A/I)$ and so, given $\pi:A\to A/I$ in Vec$_{G}$, 
\end{invisible}
$F(\pi):F(A)\to F(A/I)$ is the projection $F(A)\to F(A)/F(I)$ in Vec$_{\mathbb{Z}_2}$. Also note that, if $\pi:A\to A/I$ is in $\Hc$, then $F(\pi)$ is in $\mathrm{Hopf}_{\mathrm{coc}}(\mathrm{Vec}_{\mathbb{Z}_{2}})$ and $F(A/I)$ has the unique structure in $\mathrm{Hopf}_{\mathrm{coc}}(\mathrm{Vec}_{\mathbb{Z}_{2}})$ induced by $F(A)$. 
\begin{invisible}
Thus, in the hypothesis of Lemma \ref{lemmino}, if $A/I$ is in $\Hc$ then $F(A)/F(I)=F(A/I)$ in $\mathrm{Hopf}_{\mathrm{coc}}(\mathrm{Vec}_{\mathbb{Z}_{2}})$. In fact, taking $\pi:A\to A/I$ in $\Hc$, then $F(\pi):F(A)\to F(A/I)$ is still surjective and then the grading on $F(A/I)$ is the unique induced by $F(A)$ through the surjection, so that $F(A/I)_{g}=F(\pi)(F(A)_{g})=(I+F(A)_{g})/I$, which is the grading on $F(A)/F(I)$ and then they are the same object in Vec$_{\mathbb{Z}_2}$. Furthermore, since $F(\pi)$ is a morphism in $\mathrm{Hopf}_{\mathrm{coc}}(\mathrm{Vec}_{\mathbb{Z}_{2}})$, $F(A/I)$ has the unique structure of super Hopf algebras induced by $F(A)$, so that $F(A/I)=F(A)/F(I)$ in $\mathrm{Hopf}_{\mathrm{coc}}(\mathrm{Vec}_{\mathbb{Z}_{2}})$.
\end{invisible}
\end{oss}

\begin{invisible}
\begin{lem}\label{lemmino}
The functor $F:\mathrm{Vec}_{G}\to\mathrm{Vec}_{\mathbb{Z}_{2}}$ preserves and reflects quotient graded algebras and coalgebras, then quotient color bialgebras and Hopf algebras.
\end{lem}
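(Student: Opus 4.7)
The plan is to mirror the proof of Lemma \ref{sub}, exploiting the fact that $F$ does not alter the underlying vector space (so $F(A) = A$ and $F(I) = I$ as vector spaces for any graded subspace $I \subseteq A$), and that the monoidal constraints $\phi^{2}_{V,W}$ act on each homogeneous component $V_{g}\otimes W_{h}$ by multiplication by the non-zero scalar $\gamma(g,h)$. Consequently $\phi^{2}_{A,A}$ and its inverse stabilize every subspace of the form $U\otimes V$, or of the form $U\otimes V + V'\otimes U'$, with $U,V,U',V'$ graded subspaces of $A$. By Remark \ref{quotient}, $F$ sends the canonical projection $\pi:A\to A/I$ in $\mathrm{Vec}_{G}$ to the canonical projection $F(A)\to F(A)/F(I)$ in $\mathrm{Vec}_{\mathbb{Z}_{2}}$, so what remains is to show that being a two-sided ideal, two-sided coideal, or antipode-stable subspace transfers in both directions between $I\subseteq A$ and $F(I)\subseteq F(A)$.

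I would first address the algebra case. Using $m_{F(A)} = F(m_{A})\circ(\phi^{2}_{A,A})^{-1}$ together with the stabilization property above,
\[
m_{F(A)}(F(A)\otimes F(I)) = F(m_{A})(F(A\otimes I)) = m_{A}(A\otimes I),
\]
and symmetrically on the other side, so $I$ is a two-sided ideal of $A$ if and only if $F(I)$ is a two-sided ideal of $F(A)$. The coalgebra case follows by the same argument from $\Delta_{F(A)} = \phi^{2}_{A,A}\circ F(\Delta_{A})$, noting that $\phi^{2}_{A,A}$ stabilizes $I\otimes A + A\otimes I$ and that the counit satisfies $\epsilon_{F(A)} = \epsilon_{A}$ (since $\phi^{0} = \mathrm{Id}_{\Bbbk}$). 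Combining gives the bialgebra case. Finally the antipode is preserved literally by $F$, namely $S_{F(A)} = F(S_{A}) = S_{A}$ as a linear map, so $S_{A}(I)\subseteq I$ if and only if $S_{F(A)}(F(I))\subseteq F(I)$. Packaging these equivalences with Remark \ref{quotient} yields the claimed preservation and reflection of quotient graded algebras, coalgebras, color bialgebras and color Hopf algebras.

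I do not anticipate any substantial obstacle; the entire argument is formal bookkeeping that repackages the definition of $(F,\phi^{0},\phi^{2})$, the elementary fact that scalar rescaling on homogeneous components does not affect subspace containments, and Remark \ref{quotient}. The only mildly delicate point is the explicit verification that the nonvanishing of $\gamma$ is what ensures the scalar rescalings are harmless, which was implicitly used already in Lemma \ref{sub}. Notably, the proof needs neither a restriction on $\mathrm{char}\,\Bbbk$ nor any further hypothesis on $G$ beyond what is required for the functor $F$ to exist.
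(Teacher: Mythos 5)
Your proposal is correct and follows essentially the same route as the paper's own argument: both reduce everything to the observations that $F$ is the identity on underlying vector spaces, that $\phi^{2}_{A,A}$ and its inverse act by non-zero scalars on homogeneous components and hence stabilize subspaces such as $A\otimes I$ and $I\otimes A+A\otimes I$ built from graded subspaces, and that Remark \ref{quotient} identifies $F(\pi)$ with the projection $F(A)\to F(A)/F(I)$. The only cosmetic difference is that you phrase each condition as an equivalence to get preservation and reflection simultaneously, whereas the paper treats preservation as already known and verifies only the reflection direction.
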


\begin{proof}
We consider the case of quotient color Hopf algebras which includes all the others in itself. We already know that if $A/I$ is in $\Hc$ then $F(A)/F(I)$ is in $\mathrm{Hopf}_{\mathrm{coc}}(\mathrm{Vec}_{\mathbb{Z}_{2}})$, we show the other direction and we assume that $I\subseteq A$ is a graded subspace such that $F(A)/F(I)$ is in $\mathrm{Hopf}_{\mathrm{coc}}(\mathrm{Vec}_{\mathbb{Z}_{2}})$. By Remark \ref{oso}, $A/I$ is in $\Hc$ if we show that the graded subspace $I$ is a two-sided ideal and two-sided coideal of $A$ and it is closed under $S_{A}$. Now, as for Lemma \ref{sub}, we observe that
\[
F(IA)=F(m_{A})(F(I\otimes A))=F(m_{A})(\phi^2_{A,A})^{-1}(F(I)\otimes F(A))=F(I)F(A)\ \text{ and }\ F(AI)=F(A)F(I)
\]
and $S_{F(A)}(F(I))=F(S_{A}(I))$, thus, since $F(I)$ is a two-sided ideal of $F(A)$ and it is closed under $S_{F(A)}$, then $I$ is a two-sided ideal of $A$ and it is closed under $S_{A}$ because $F$ does not change the structure of vector space. We also have that 
\[
\phi^2_{A,A}F(\Delta_{A}(I))=\Delta_{F(A)}(F(I))\subseteq F(I)\otimes F(A)+F(A)\otimes F(I)\ \text{and}\ \epsilon_{A}(F(I))=0
\]
since $F(I)$ is a two-sided coideal of $F(A)$. Now we consider the forgetful functor $G:\mathrm{Vec}_{\mathbb{Z}_{2}}\to\mathrm{Vec}_{\Bbbk}$ which forgets the grading. So $G(\phi^2_{A,A})=\phi^2_{A,A}$ is considered as an isomorphism of vector spaces so  
\[
G(\phi^2_{A,A})(\Delta_{A}(I))=G(\phi^2_{A,A})(G(F(\Delta_{A}(I))))=G(\phi^2_{A,A}F(\Delta_{A}(I)))\subseteq I\otimes A+A\otimes I 
\]
and then $\Delta_{A}(I)\subseteq G((\phi^2_{A,A})^{-1})(A\otimes I+I\otimes A)\subseteq A\otimes I+I\otimes A$
by remembering the explicit form of $\phi^2_{A,A}$.
\end{proof}
\end{invisible}

\begin{prop}\label{ossi2}
The functor $F:\Hc\to\mathrm{Hopf}_{\mathrm{coc}}(\mathrm{Vec}_{\mathbb{Z}_{2}})$ preserves and reflects equalizers.
\end{prop}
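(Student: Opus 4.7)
The plan is to exploit the fact that $F$ is the identity on underlying vector spaces and on morphisms, so that the only structural difference between an object $A$ in $\Hc$ and $F(A)$ in $\mathrm{Hopf}_{\mathrm{coc}}(\mathrm{Vec}_{\mathbb{Z}_2})$ is the relabeling of the grading together with the rescaling $\Delta_{F(A)}=\phi^{2}_{A,A}\circ\Delta_A$ of the coproduct. The first step is to record the pointwise identity
\[
(\mathrm{Id}_A\otimes f)\circ\phi^{2}_{A,A}=\phi^{2}_{A,B}\circ(\mathrm{Id}_A\otimes f),
\]
valid for every morphism $f:A\to B$ in $\Hc$, since $\phi^{2}$ acts as a scalar depending only on the $G$-degrees, which are preserved by $f$.

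For preservation, I would compare the equalizer of $f,g$ in $\Hc$, described earlier as the color Hopf subalgebra $\mathrm{Eq}(f,g)=\{x\in A\mid(\mathrm{Id}_A\otimes f)\Delta_A(x)=(\mathrm{Id}_A\otimes g)\Delta_A(x)\}$ with its canonical inclusion, with the analogous equalizer of $F(f),F(g)$ in the super category, defined by the same formula but with $\Delta_{F(A)}$ in place of $\Delta_A$. Substituting $\Delta_{F(A)}=\phi^{2}_{A,A}\circ\Delta_A$ and applying the naturality identity above, the super equalizer condition reduces to $\phi^{2}_{A,B}((\mathrm{Id}_A\otimes f)\Delta_A(x))=\phi^{2}_{A,B}((\mathrm{Id}_A\otimes g)\Delta_A(x))$; since $\phi^{2}_{A,B}$ is a linear isomorphism, this is equivalent to the condition cutting out $\mathrm{Eq}(f,g)$, so the two underlying sets coincide. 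Combining with Lemma \ref{sub}, which transports Hopf submonoids along $F$ in both directions, one obtains that $F(\mathrm{Eq}(f,g))$ together with $F(i)$ is the equalizer of $F(f),F(g)$.

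For reflection, suppose $e:E\to A$ in $\Hc$ is such that $F(e)$ is an equalizer of $F(f),F(g)$. Since $F$ is faithful, $f\circ e=g\circ e$, so the universal property of the equalizer $(K,i)$ of $(f,g)$ in $\Hc$ produces a unique morphism $\beta:E\to K$ in $\Hc$ with $i\circ\beta=e$. Applying $F$ and using preservation, $(F(K),F(i))$ is also an equalizer of $F(f),F(g)$, and by uniqueness the super Hopf morphism $F(\beta)$ is forced to be the comparison isomorphism; in particular $\beta$ is bijective as a linear map. The main obstacle is to promote $\beta^{-1}:K\to E$ to a morphism in $\Hc$. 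I would argue $G$-grading preservation directly: given $y\in K_g$, write $\beta^{-1}(y)=\sum_h x_h$ along the $G$-grading of $E$; then $y=\sum_h\beta(x_h)$ in the $G$-graded decomposition of $A$, forcing $\beta(x_h)=0$ for $h\neq g$ by uniqueness and hence $x_h=0$ by injectivity of $\beta$. Once the $G$-grading is respected, the same $\phi^{2}$-naturality identity promotes super (co)multiplicativity of $\beta^{-1}$ to color (co)multiplicativity, so $\beta^{-1}$ lies in $\Hc$. Hence $\beta$ is an isomorphism in $\Hc$, identifying $(E,e)$ with $(K,i)$ and exhibiting it as the equalizer of $(f,g)$.
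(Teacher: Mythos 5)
Your proposal is correct and follows essentially the same route as the paper: for preservation you use exactly the same naturality-of-$\phi^2$ computation to identify $F(\mathrm{Eq}(f,g))$ with $\mathrm{Eq}(F(f),F(g))$, and for reflection you unpack by hand the standard fact the paper simply cites (a faithful functor that preserves equalizers and reflects isomorphisms reflects equalizers), with your direct verification that $\beta^{-1}$ preserves the $G$-grading playing the role of the reflection of isomorphisms.
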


\begin{proof}
 If we take $f,g:A\to B$ in $\Hc$ and $j:\mathrm{Eq}(f,g)\to A$ the equalizer of the pair $(f,g)$ in $\Hc$, then we can consider $F(j):F(\mathrm{Eq}(f,g))\to F(A)$ in Hopf$_{\mathrm{coc}}(\mathrm{Vec}_{\mathbb{Z}_{2}})$ and we can show that $F(\mathrm{Eq}(f,g))=\mathrm{Eq}(F(f),F(g))$. We know that $\mathrm{Eq}(F(f),F(g))$ in Hopf$_{\mathrm{coc}}(\mathrm{Vec}_{\mathbb{Z}_{2}})$ is given by those $x\in F(A)$ such that $(\mathrm{Id}_{F(A)}\otimes F(f))\Delta_{F(A)}(x)=(\mathrm{Id}_{F(A)}\otimes F(g))\Delta_{F(A)}(x)$. But now we have that
\[
(\mathrm{Id}_{F(A)}\otimes F(f))\circ\Delta_{F(A)}=(F(\mathrm{Id}_{A})\otimes F(f))\circ\phi^{2}_{A,A}\circ F(\Delta_{A})=\phi^{2}_{A,B}\circ F(\mathrm{Id}_{A}\otimes f)\circ F(\Delta_{A})=\phi^{2}_{A,B}\circ F((\mathrm{Id}_{A}\otimes f)\circ\Delta_{A})
\]
and similarly $(\mathrm{Id}_{F(A)}\otimes F(g))\circ\Delta_{F(A)}=\phi^{2}_{A,B}\circ F((\mathrm{Id}_{A}\otimes g)\circ\Delta_{A})$. Then, equivalently, $\mathrm{Eq}(F(f),F(g))$ is composed by elements $x\in F(A)$ such that $F((\mathrm{Id}_{A}\otimes f)\Delta_{A})(x)=F((\mathrm{Id}_{A}\otimes g)\Delta_{A})(x)$, i.e. such that $(\mathrm{Id}_{A}\otimes f)\Delta_{A}(x)=(\mathrm{Id}_{A}\otimes g)\Delta_{A}(x)$ and these are exactly the elements of $F(\mathrm{Eq}(f,g))$. Hence we have that $(F(\mathrm{Eq}(f,g)),F(j))$ is the equalizer of the pair $(F(f),F(g))$ in Hopf$_{\mathrm{coc}}(\mathrm{Vec}_{\mathbb{Z}_{2}})$, so $F$ preserves equalizers. The fact that $F$ reflects equalizers follows using that $F$ preserves equalizers and that $F$ reflects isomorphisms (see \cite[Proposition 2.9.7]{book1}).
\begin{invisible}
Now consider $f,g:A\to B$ and $i:C\to A$ in $\Hc$ such that $f\circ i=g\circ i$ and such that $F(i):F(C)\to F(A)$ is the equalizer of the pair $(F(f),F(g))$ in Hopf$_{\mathrm{coc}}(\mathrm{Vec}_{\mathbb{Z}_{2}})$. Then if we consider $k:\mathrm{Eq}(f,g)\to A$ the equalizer of the pair $(f,g)$ in $\Hc$, since $f\circ i=g\circ i$, there exists a unique $\lambda:C\to\mathrm{Eq}(f,g)$ in $\Hc$ such that $k\circ\lambda=i$.
\[
\begin{tikzcd}
	\mathrm{Eq}(f,g) && A & B \\
	& C
	\arrow[from=1-1, to=1-3, "k"]
	\arrow[from=2-2, to=1-1,dotted,"\lambda"]
	\arrow[from=2-2, to=1-3, "i"']
	\arrow[shift left=2, from=1-3, to=1-4, "f"]
	\arrow[shift right=2, from=1-3, to=1-4, "g"']
\end{tikzcd}
\]
Now, since $F(\mathrm{Eq}(f,g))=\mathrm{Eq}(F(f),F(g))$ and $F(i)$ is the equalizer of the pair $(F(f),F(g))$ in Hopf$_{\mathrm{coc}}(\mathrm{Vec}_{\mathbb{Z}_{2}})$, $F(\lambda)$ must be an isomorphism in Hopf$_{\mathrm{coc}}(\mathrm{Vec}_{\mathbb{Z}_{2}})$ and then $\lambda$ must be an isomorphism in $\Hc$ since $F$ is faithful. Thus $i$ is the equalizer of the pair $(f,g)$ in $\Hc$, so $F$ reflects equalizers.
\end{invisible}
\end{proof}

Clearly the previous result holds true by considering $F:\mathrm{Comon}_{\mathrm{coc}}(\mathrm{Vec}_{G})\to\mathrm{Comon}_{\mathrm{coc}}(\mathrm{Vec}_{\mathbb{Z}_{2}})$. 
\begin{invisible}
We also have the same result for coequalizers:

\begin{prop}\label{coequalizer}
    The functor $F:\Hc\to\mathrm{Hopf}_{\mathrm{coc}}(\mathrm{Vec}_{\mathbb{Z}_{2}})$ preserves and reflects coequalizers.
\end{prop}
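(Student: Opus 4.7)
The plan is to mirror the proof of Proposition \ref{ossi2} for equalizers, by explicitly matching the coequalizer constructed in $\Hc$ with the coequalizer constructed in $\mathrm{Hopf}_{\mathrm{coc}}(\mathrm{Vec}_{\mathbb{Z}_{2}})$, and then deducing the reflection from preservation together with a short bookkeeping argument about $G$-gradings.

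For preservation, I would take $f,g:A\to B$ in $\Hc$ and consider the coequalizer $\pi:B\to B/I$ in $\Hc$, where $I=B(f-g)(A)B$, and compare it with the analogous construction applied to $F(f),F(g):F(A)\to F(B)$ in $\mathrm{Hopf}_{\mathrm{coc}}(\mathrm{Vec}_{\mathbb{Z}_{2}})$, which produces $F(B)/J$ with $J$ the two-sided ideal of $F(B)$ generated by $(F(f)-F(g))(F(A))$ under the twisted multiplication $m_{F(B)}$. The key observation is that $F$ is the identity on underlying vector spaces and on underlying linear maps, and that $m_{F(B)}$ differs from $m_{B}$ only by nonzero scalars coming from the 2-cocycle $\gamma$; hence the vector subspaces $I$ and $J$ of the common underlying space coincide. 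Combined with Remark \ref{quotient}, which identifies $F(B/I)$ with $F(B)/F(I)$ in $\mathrm{Hopf}_{\mathrm{coc}}(\mathrm{Vec}_{\mathbb{Z}_{2}})$, this gives $F(\pi)=\mathrm{Coeq}(F(f),F(g))$.

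For reflection, suppose $p:B\to C$ in $\Hc$ satisfies $p\circ f=p\circ g$ and that $F(p)$ is the coequalizer of $(F(f),F(g))$ in $\mathrm{Hopf}_{\mathrm{coc}}(\mathrm{Vec}_{\mathbb{Z}_{2}})$. The universal property of the coequalizer $\pi:B\to B/I$ in $\Hc$ yields a unique $\lambda:B/I\to C$ in $\Hc$ with $\lambda\circ\pi=p$; applying $F$ and invoking the preservation just established, both $F(\pi)$ and $F(p)$ are coequalizers of $(F(f),F(g))$, so $F(\lambda)$ is forced to be an isomorphism in $\mathrm{Hopf}_{\mathrm{coc}}(\mathrm{Vec}_{\mathbb{Z}_{2}})$. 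Since $F$ acts as the identity on the underlying linear map, $\lambda$ is a linear bijection; a short direct-sum argument shows that if $\lambda$ is $G$-graded and bijective then $\lambda(V_{g})=W_{g}$ for every $g\in G$, so $\lambda^{-1}$ is automatically $G$-graded and is a morphism of color Hopf algebras because $\lambda$ is. Hence $\lambda$ is an isomorphism in $\Hc$, and $p=\lambda\circ\pi$ is itself a coequalizer in $\Hc$. The principal technical point is the identification $I=J$ as vector subspaces of the common underlying space, which requires tracking how the $\gamma$-twisting rescales the generated two-sided ideal; beyond that, the argument is a rigid analogue of the equalizer case.
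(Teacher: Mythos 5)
Your proposal is correct and follows essentially the same route as the paper: the paper's preservation argument is precisely the identification $F(B((f-g)(A))B)=F(B)\bigl((F(f)-F(g))(F(A))\bigr)F(B)$ (the twisted product only rescales homogeneous products by nonzero values of $\gamma$) combined with Remark \ref{quotient}, and its reflection argument is the same comparison-morphism-plus-$F$-reflects-isomorphisms step you describe. The only difference is that you spell out why $F$ reflects isomorphisms via the grading, where the paper cites a standard reference.
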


\begin{proof}
Let $f,g:A\to B$ in $\Hc$ and $\pi:B\to B/I$ the coequalizer of the pair $(f,g)$ in $\Hc$, where $I=B((f-g)(A))B$. We can consider $F(\pi):F(B)\to F(B/I)$ in $\mathrm{Hopf}_{\mathrm{coc}}(\mathrm{Vec}_{\mathbb{Z}_{2}})$, where $F(B/I)=F(B)/F(I)$ in $\mathrm{Hopf}_{\mathrm{coc}}(\mathrm{Vec}_{\mathbb{Z}_{2}})$ by Remark \ref{quotient}. Furthermore
\[    
    F(B((f-g)(A))B)=F(B)F((f-g)(A))F(B)=F(B)((F(f)-F(g))(F(A)))F(B),
\]
thus $F(\pi):F(B)\to F(B/I)$ is the coequalizer of the pair $(F(f),F(g))$ in $\mathrm{Hopf}_{\mathrm{coc}}(\mathrm{Vec}_{\mathbb{Z}_{2}})$, so $F$ preserves coequalizers. The fact that $F$ reflects coequalizers follows, as before for equalizers, using that $F(\mathrm{Coeq}(f,g))=\mathrm{Coeq}(F(f),F(g))$ and that $F$ reflects isomorphisms (see \cite[Exercise 3 p.155]{book3}).
\end{proof}
Clearly Propositions \ref{ossi2} and \ref{coequalizer} hold true when we consider kernels and cokernels.
\end{invisible}
 
\begin{lem}\label{module}
Given a graded algebra $H$ and a color left $H$-module $A$, then $F(A)$ is a super left $F(H)$-module. 
\end{lem}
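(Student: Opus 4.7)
The plan is to transport the color action along $F$ in the standard way. Given the color left $H$-action $\mu_A : H \otimes A \to A$, I define
\[
\mu_{F(A)} := F(\mu_A) \circ (\phi^2_{H,A})^{-1} : F(H) \otimes F(A) \longrightarrow F(H \otimes A) \longrightarrow F(A),
\]
and claim that this endows $F(A)$ with a left $F(H)$-module structure in $\mathrm{Vec}_{\mathbb{Z}_2}$, with respect to the monoid structure $m_{F(H)} = F(m_H) \circ (\phi^2_{H,H})^{-1}$, $u_{F(H)} = F(u_H) \circ (\phi^0)^{-1}$ on $F(H)$ already recorded. Since both $(\phi^2_{H,A})^{-1}$ and $F(\mu_A)$ are morphisms in $\mathrm{Vec}_{\mathbb{Z}_2}$ — the first because $F$ is strong monoidal, the second because $F$ is a functor into $\mathrm{Vec}_{\mathbb{Z}_2}$ — the composite $\mu_{F(A)}$ is automatically $\mathbb{Z}_2$-graded, so the only remaining task is to verify the two module axioms.

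For the unit axiom $\mu_{F(A)} \circ (u_{F(H)} \otimes \mathrm{Id}_{F(A)}) = l'_{F(A)}$, I would expand the left-hand side using the definitions of $\mu_{F(A)}$ and $u_{F(H)}$, slide $F(u_H)$ past $\phi^2_{H,A}$ using the naturality of $\phi^2$ in the first variable, and finish by invoking the coherence relation
\[
l'_{F(A)} \circ (\phi^0 \otimes \mathrm{Id}_{F(A)}) \circ \phi^2_{\mathbf{I},A} = F(l_A)
\]
of the strong monoidal functor $F$, together with the unit axiom $\mu_A \circ (u_H \otimes \mathrm{Id}_A) = l_A$ of the original action.

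The main step is the associativity axiom
\[
\mu_{F(A)} \circ (m_{F(H)} \otimes \mathrm{Id}_{F(A)}) = \mu_{F(A)} \circ (\mathrm{Id}_{F(H)} \otimes \mu_{F(A)}) \circ a'_{F(H), F(H), F(A)}.
\]
After unfolding both sides, the strategy is to apply the associator coherence axiom
\[
(\mathrm{Id}_{F(H)} \otimes \phi^2_{H,A}) \circ \phi^2_{H, H \otimes A} \circ F(a_{H,H,A}) = a'_{F(H), F(H), F(A)} \circ (\phi^2_{H,H} \otimes \mathrm{Id}_{F(A)}) \circ \phi^2_{H \otimes H, A}
\]
of $F$, together with repeated naturality of $\phi^2$ used to slide $F(m_H)$ and $F(\mu_A)$ past the remaining $\phi^2$'s, reducing the identity to the image under $F$ of the color-action associativity $\mu_A \circ (m_H \otimes \mathrm{Id}_A) = \mu_A \circ (\mathrm{Id}_H \otimes \mu_A) \circ a_{H,H,A}$. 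The only mildly delicate point is organizing this last diagram chase so that the hexagon coherence of $F$ applies in the correct orientation; otherwise the argument is formal and uses nothing specific about $\mathrm{Vec}_G$ or $\mathrm{Vec}_{\mathbb{Z}_2}$ beyond the fact that $F$ is a strong monoidal functor.
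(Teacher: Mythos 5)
Your proposal is correct and follows essentially the same route as the paper: define the transported action $F(\mu)\circ(\phi^{2}_{H,A})^{-1}$ and verify the two module axioms by sliding $F(m_{H})$ and $F(\mu)$ past the $\phi^{2}$'s via naturality, invoking the coherence constraints of the strong monoidal functor $F$, and finally applying $F$ to the original color-module axioms. The paper carries out exactly the diagram chase you sketch, so no further comment is needed.
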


\begin{proof}
    Given the action $\mu:H\otimes A\to A$ in Vec$_{G}$, we define $\mu':=F(\mu)\circ(\phi^2_{H,A})^{-1}:F(H)\otimes F(A)\to F(A)$ in Vec$_{\mathbb{Z}_{2}}$. Thus we can compute
\[
\begin{split}
    \mu'\circ(m_{F(H)}\otimes\mathrm{Id}_{F(A)})&=F(\mu)\circ(\phi^2_{H,A})^{-1}\circ(F(m_{H})\otimes F(\mathrm{Id}_{A}))\circ((\phi^2_{H,H})^{-1}\otimes\mathrm{Id}_{F(A)})\\&=F(\mu)\circ F(m_{H}\otimes\mathrm{Id}_{A})\circ(\phi^2_{H\otimes H,A})^{-1}\circ((\phi^2_{H,H})^{-1}\otimes\mathrm{Id}_{F(A)})\\&=F(\mu\circ(\mathrm{Id}_{H}\otimes\mu))\circ((\phi^2_{H,H}\otimes\mathrm{Id}_{F(A)})\circ\phi^2_{H\otimes H,A})^{-1}\\&=F(\mu)\circ F(\mathrm{Id}_{H}\otimes\mu)\circ(\phi^2_{H,H\otimes A})^{-1}\circ(\mathrm{Id}_{F(H)}\otimes\phi^2_{H,A})^{-1}\\&=F(\mu)\circ(\phi^2_{H,A})^{-1}\circ(\mathrm{Id}_{F(H)}\otimes F(\mu))\circ(\mathrm{Id}_{F(H)}\otimes(\phi^2_{H,A})^{-1})\\&=\mu'\circ(\mathrm{Id}_{F(H)}\otimes\mu')
\end{split}
\]
and
\[
\begin{split}
    \mu'\circ(u_{F(H)}\otimes\mathrm{Id}_{F(A)})&=F(\mu)\circ(\phi^2_{H,A})^{-1}\circ(F(u_{H})\otimes F(\mathrm{Id}_{A}))=F(\mu)\circ F(u_{H}\otimes\mathrm{Id}_{A})\circ(\phi^2_{\Bbbk,A})^{-1}\\&=F(l_{A})\circ(\phi^2_{\Bbbk,A})^{-1}=l_{F(A)}.
\end{split}
\]
\end{proof}

\begin{thm}\label{magen}
    Let $H$ be a cocommutative color Hopf algebra. Then the color Hopf subalgebras $K\subseteq H$ and the quotient color left $H$-module coalgebras $Q$ of $H$ are in 1-1 correspondence, under $K\mapsto H/HK^{+}$, $Q\mapsto H^{\mathrm{co}Q}$. 
\end{thm}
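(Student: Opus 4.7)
The plan is to reduce the theorem to Masuoka's Theorem~\ref{ma} via the braided strong monoidal functor $F:\Hc\to\mathrm{Hopf}_{\mathrm{coc}}(\mathrm{Vec}_{\mathbb{Z}_{2}})$ from Proposition~\ref{braided strong}. The strategy has three main steps: first, to show that $F$ induces a bijection between color Hopf subalgebras of $H$ and super Hopf subalgebras of $F(H)$; second, to establish the analogous bijection between quotient color left $H$-module coalgebras of $H$ and quotient super left $F(H)$-module coalgebras of $F(H)$; third, to verify that both assignments $K\mapsto H/HK^{+}$ and $Q\mapsto H^{\mathrm{co}Q}$ commute with $F$ under these identifications. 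Once all three compatibilities are in place, applying Theorem~\ref{ma} to $F(H)$ transports the bijections $\phi_{F(H)},\psi_{F(H)}$ back to $\Hc$.

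For the first step I would combine Lemma~\ref{sub}, which gives that $F$ preserves and reflects Hopf submonoids, with Lemma~\ref{injsub}, which gives injectivity of the underlying forgetful functor on subobjects. Since $F$ leaves the underlying vector space unchanged, $K\mapsto F(K)$ is injective; surjectivity follows from reflection, because any super Hopf subalgebra $L\subseteq F(H)$ is a graded subspace which, since $F$ modifies products and coproducts only through the isomorphism $\phi^{2}_{H,H}$, is automatically closed under $m_{H}$, $u_{H}$, $\Delta_{H}$, $\epsilon_{H}$ and $S_{H}$, and so is a color Hopf subalgebra of $H$ with $F(L)=L$. For the second step, a quotient color left $H$-module coalgebra of $H$ is $H/I$ with $I$ simultaneously a two-sided coideal of $H$ in $\mathrm{Vec}_{G}$ and a left ideal for the color multiplication; using Lemma~\ref{module} to transport the action, Remark~\ref{quotient} to identify $F(H/I)=F(H)/F(I)$, and a reflection argument analogous to that of Lemma~\ref{sub}, I would obtain the desired bijection, with injectivity again provided by Lemma~\ref{injsub}.

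For the third step, $F(K^{+})=F(K)^{+}$ because the counit is unchanged by $F$, and combined with $m_{F(H)}=F(m_{H})\circ(\phi^{2}_{H,H})^{-1}$ it yields $F(HK^{+})=F(H)F(K)^{+}$, so Remark~\ref{quotient} gives $F(H/HK^{+})=F(H)/F(H)F(K)^{+}$. For the other assignment, $H^{\mathrm{co}Q}$ is the equalizer of $(\mathrm{Id}_{H}\otimes\pi)\circ\Delta_{H}$ and $(\mathrm{Id}_{H}\otimes\pi\circ u_{H}\circ\epsilon_{H})\circ\Delta_{H}$, where $\pi:H\to Q$ is the canonical projection, and the same computation as in Proposition~\ref{ossi2} should yield $F(H^{\mathrm{co}Q})=F(H)^{\mathrm{co}F(Q)}$. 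I expect the main obstacle to lie precisely at this point: Proposition~\ref{ossi2} handles equalizers inside $\Hc$, whereas here $Q$ is only a quotient color left $H$-module coalgebra rather than a color Hopf algebra, so the compatibility has to be re-established in this broader setting and one must verify carefully that $\phi^{2}$ interacts appropriately with the coalgebra projection $\pi$. Once this is done, the Masuoka bijections for $F(H)$ lift to $\phi_{H}\circ\psi_{H}=\mathrm{Id}$ and $\psi_{H}\circ\phi_{H}=\mathrm{Id}$ thanks to the injectivity of $F$ on subobjects and on quotients of the same object.
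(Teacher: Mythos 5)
Your overall route is the same as the paper's: transport everything along the braided strong monoidal functor $F:\mathrm{Vec}_{G}\to\mathrm{Vec}_{\mathbb{Z}_2}$, check that the two assignments commute with $F$ (including the point you correctly flag as delicate, namely $F(H^{\mathrm{co}Q})=F(H)^{\mathrm{co}F(Q)}$ and the fact that $F(Q)$ is a quotient super left $F(H)$-module coalgebra, which in the paper is the hexagon computation following Lemma \ref{module}), apply Theorem \ref{ma} to $F(H)$, and conclude via the injectivity of $F$ on subobjects and quotients of a fixed object (Lemma \ref{injsub}).

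There is, however, one step as stated that fails: the claimed \emph{surjectivity} of $K\mapsto F(K)$ onto super Hopf subalgebras of $F(H)$ (and its analogue for quotient module coalgebras). A super Hopf subalgebra $L\subseteq F(H)$ is only required to be a $\mathbb{Z}_{2}$-graded subspace of $F(H)=H_{0}\oplus H_{1}$; it need not be a $G$-graded subspace of $H$, so it is in general not of the form $F(K)$ for a color Hopf subalgebra $K$. For instance, with $G=\mathbb{Z}$, trivial bicharacter and $H=\Bbbk[x,y]$ with $x,y$ primitive of degrees $1$ and $2$, the sub-Hopf-algebra $\Bbbk[x+y]$ of $F(H)$ is not $G$-graded. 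Note that the reflection direction of Lemma \ref{sub} presupposes that the subspace $C\subseteq A$ is already a $G$-graded subspace, which is why it does not yield surjectivity. Fortunately this bijectivity at the level of subobjects is not needed: since you have the compatibilities $F(H/HK^{+})=F(H)/F(H)F(K)^{+}$ and $F(H^{\mathrm{co}Q})=F(H)^{\mathrm{co}F(Q)}$, Masuoka's theorem gives $F(\psi_{H}(\phi_{H}(K)))=F(K)$ and $F(\phi_{H}(\psi_{H}(Q)))=F(Q)$, and Lemma \ref{injsub} alone then forces $\psi_{H}\circ\phi_{H}=\mathrm{Id}$ and $\phi_{H}\circ\psi_{H}=\mathrm{Id}$; this is exactly how the paper closes the argument. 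If you drop the surjectivity claims and keep only injectivity plus the two compatibilities (and add the routine verification that $H/HK^{+}$ is indeed a quotient color left $H$-module coalgebra, which the paper does directly), your proof is complete and coincides with the paper's.
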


\begin{proof}
First of all we show that the two maps are well-defined. So, given a color Hopf subalgebra $K\subseteq H$, we know that $HK^{+}$ is a graded left ideal of $H$ and also a two-sided coideal of $H$ since $HK^{+}=\mathrm{Im}(m\circ(\mathrm{Id}_{H}\otimes i)-l\circ(\mathrm{Id}_{H}\otimes\epsilon))$, where $i:K\to H$ is the inclusion (see \cite[Proposition 1.4.8]{book5}). Furthermore, recalling the monoidal structure of $_{H}\mathrm{Vec}_{G}$ given in Remark \ref{ms}, it is not difficult to see that $\Delta_{H/HK^{+}}$ and $\epsilon_{H/HK^{+}}$ are morphisms of left $H$-modules. Indeed, given $\pi:H\to H/HK^{+}$ in $\mathrm{Comon}_{\mathrm{coc}}(\mathrm{Vec}_{G})$, for every $a,b\in H$ we have that
\[
\begin{split}
    \Delta_{H/HK^{+}}(h\cdot(a+HK^{+}))&=\Delta_{H/HK^{+}}(ha+HK^{+})=(\pi\otimes\pi)\Delta_{H}(ha)\\&=\phi(|h_{2}|,|a_{1}|)(h_{1}a_{1}+HK^{+})\otimes(h_{2}a_{2}+HK^{+})\\&=h\cdot\Delta_{H/HK^{+}}(a+HK^{+})
\end{split}
\]
and 
\[
\epsilon_{H/HK^{+}}(h\cdot(a+HK^{+}))=\epsilon_{H/HK^{+}}(ha+HK^{+})=\epsilon_{H}(ha)=\epsilon_{H}(h)\epsilon_{H}(a)=h\cdot\epsilon_{H/HK^{+}}(a+HK^{+}),
\]
thus $H/HK^{+}$ is a quotient color left $H$-module coalgebra of $H$. Furthermore, let $Q$ be a quotient color left $H$-module coalgebra of $H$; we show that
\[
H^{\mathrm{co}Q}=\{h\in H\ |\ (\mathrm{Id}_{H}\otimes\pi)\Delta(h)=h\otimes\pi(1_{H})\}=\{h\in H\ |\ (\mathrm{Id}_{H}\otimes\pi)\Delta(h)=(\mathrm{Id}_{H}\otimes\pi u\epsilon)\Delta(h)\}
\]
is a color Hopf subalgebra of $H$. It is in Comon$_{\mathrm{coc}}(\mathrm{Vec}_{G})$ since it is the equalizer object of the pair $(\pi,\pi\circ u_{H}\circ\epsilon_{H})$ in Comon$_{\mathrm{coc}}(\mathrm{Vec}_{G})$. In addition, by Proposition \ref{ossi2}, we know that $F(H^{\mathrm{co}Q})$ is the equalizer of the pair $(F(\pi),F(\pi\circ u_{H}\circ\epsilon_{H}))=(F(\pi),F(\pi)\circ u_{F(H)}\circ\epsilon_{F(H)})$ in Comon$_{\mathrm{coc}}(\mathrm{Vec}_{\mathbb{Z}_{2}})$, i.e. $F(H^{\mathrm{co}Q})=F(H)^{\mathrm{co}F(Q)}$. But now $F(H)$ is a super Hopf algebra and $F(Q)$ is a quotient super coalgebra and a quotient super left $F(H)$-module of $F(H)$ by Lemma \ref{module}. Furthermore, $\Delta_{F(Q)}$ and $\epsilon_{F(Q)}$ are morphisms of left $F(H)$-modules. Indeed, recalling the hexagon relation
\[
\phi^2_{H\otimes Q,H\otimes Q}\circ F(\mathrm{Id}_{H}\otimes c_{H,Q}\otimes\mathrm{Id}_{Q})\circ(\phi^2_{H\otimes H,Q\otimes Q})^{-1}=((\phi^2_{H,Q})^{-1}\otimes(\phi^2_{H,Q})^{-1})\circ(\mathrm{Id}_{F(H)}\otimes c'_{F(H),F(Q)}\otimes\mathrm{Id}_{F(Q)})\circ(\phi^2_{H,H}
\otimes\phi^2_{Q,Q})
\]
which holds true since $F$ is a braided strong monoidal functor, we obtain that
\[
\begin{split}
    &(\mu'\otimes\mu')\circ(\mathrm{Id}_{F(H)}\otimes c'_{F(H),F(Q)}\otimes\mathrm{Id}_{F(Q)})\circ(\Delta_{F(H)}\otimes\mathrm{Id}_{F(Q)}\otimes\mathrm{Id}_{F(Q)})\circ(\mathrm{Id}_{F(H)}\otimes\Delta_{F(Q)})=\\&(F(\mu)\otimes F(\mu))\circ((\phi^2_{H,Q})^{-1}\otimes(\phi^2_{H,Q})^{-1})\circ(\mathrm{Id}_{F(H)}\otimes c'_{F(H),F(Q)}\otimes\mathrm{Id}_{F(Q)})\circ(\phi^2_{H,H}\otimes\phi^2_{Q,Q})\circ(F(\Delta_{H})\otimes F(\Delta_{Q}))=\\&(F(\mu)\otimes F(\mu))\circ\phi^2_{H\otimes Q,H\otimes Q}\circ F(\mathrm{Id}_{H}\otimes c_{H,Q}\otimes\mathrm{Id}_{Q})\circ(\phi^2_{H\otimes H,Q\otimes Q})^{-1}\circ(F(\Delta_{H})\otimes F(\Delta_{Q}))=\\&\phi^2_{Q,Q}\circ F(\mu\otimes\mu)\circ F(\mathrm{Id}_{H}\otimes c_{H,Q}\otimes\mathrm{Id}_{Q})\circ F(\Delta_{H}\otimes\Delta_{Q})\circ(\phi^2_{H,Q})^{-1}=\\&\phi^2_{Q,Q}\circ F(\Delta_{Q}\circ\mu)\circ(\phi^2_{H,Q})^{-1}=\Delta_{F(Q)}\circ\mu'
\end{split}
\]
and
\[
\epsilon_{F(Q)}\circ\mu'=F(\epsilon_{Q})\circ F(\mu)\circ(\phi^2_{H,Q})^{-1}=F(r_{\Bbbk})\circ F(\epsilon_{H}\otimes\epsilon_{Q})\circ(\phi^2_{H,Q})^{-1}=r_{\Bbbk}\circ(\epsilon_{F(H)}\otimes\epsilon_{F(Q)}).
\]
Thus $F(Q)$ is a quotient super left $F(H)$-module coalgebra of $F(H)$ and then, by Theorem \ref{ma}, we have that $F(H)^{\mathrm{co}F(Q)}=F(H^{\mathrm{co}Q})$ is a super Hopf subalgebra of $F(H)$; thus $H^{\mathrm{co}Q}$ is a color Hopf subalgebra of $H$ by Lemma \ref{sub}. Thus the two maps are well-defined and now we want to prove that they are inverse to each other. So we compute 
\[
K\mapsto H/HK^{+}\mapsto H^{\mathrm{co}\frac{H}{HK^{+}}}
\]
and 
\[
F(H^{\mathrm{co}\frac{H}{HK^{+}}})=F(H)^{\mathrm{co}F(\frac{H}{HK^{+}})}=F(H)^{\mathrm{co}\frac{F(H)}{F(H)F(K)^{+}}}=\psi_{F(H)}(\phi_{F(H)}(F(K)))=F(K)
\]
since $F(K)$ is a super Hopf subalgebra of $F(H)$ by Lemma \ref{sub}. Thus, since $K$ and $H^{\mathrm{co}\frac{H}{HK^{+}}}$ are color Hopf subalgebras of $H$ and they are the same vector space by the previous equality because $F$ does not change the structure of vector space, they must be the same object in $\Hc$ by Lemma \ref{injsub}. Furthermore we compute
\[
Q\mapsto H^{\mathrm{co}Q}\mapsto H/H(H^{\mathrm{co}Q})^{+}
\]
and
\[
F\big(\frac{H}{H(H^{\mathrm{co}Q})^{+}}\big)=\frac{F(H)}{F(H)F(H^{\mathrm{co}Q})^{+}}=\frac{F(H)}{F(H)(F(H)^{\mathrm{co}F(Q)})^{+}}=\phi_{F(H)}(\psi_{F(H)}(F(Q)))=F(Q)
\]
since $F(Q)$ is a quotient super left $F(H)$-module coalgebra of $F(H)$. We know that $Q$ and $H/H(H^{\mathrm{co}Q})^{+}$ are quotient color left $H$-module coalgebras of $H$ and so, since they are the same vector space by the previous equality, they must be the same quotient color left $H$-module coalgebra of $H$ by Lemma \ref{injsub}.
\end{proof}

We call the two maps $\phi_{H}:K\mapsto H/HK^{+}$ and $\psi_{H}:Q\mapsto H^{\mathrm{co}Q}$ as for Theorem \ref{ma}. The bijection restricts to a 1-1 correspondence between normal color Hopf subalgebras and quotient color Hopf algebras as it is shown in the following result. Thus we extend Theorem \ref{ma} and Corollary \ref{c1} to the case of cocommutative color Hopf algebras.

\begin{cor}\label{kernel}
For a color Hopf subalgebra $B\subseteq A$ of a cocommutative color Hopf algebra $A$, the following conditions are equivalent: $\\$ $\\$
(1) $B$ is a normal color Hopf subalgebra, i.e. $\phi(|a_{2}|,|b|)a_{1}bS(a_{2})\in B$ for every $a\in A$ and $b\in B$; $\\$ $\\$
(2) $A/AB^{+}$ is a quotient color Hopf algebra; $\\$ $\\$
(3) the inclusion morphism $B\to A$ is the categorical kernel of some morphism in $\Hc$.
\end{cor}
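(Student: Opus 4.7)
The strategy is to prove \textup{(3)} $\implies$ \textup{(1)}, \textup{(2)} $\implies$ \textup{(3)}, and \textup{(1)} $\iff$ \textup{(2)}, with the last equivalence being the delicate step, which I would transfer from the super Hopf case via the functor $F$.

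The implication \textup{(3)} $\implies$ \textup{(1)} is immediate from Lemma \ref{31}. For \textup{(2)} $\implies$ \textup{(3)}, if $A/AB^{+}$ is a quotient color Hopf algebra, then the canonical projection $\pi:A\to A/AB^{+}$ lies in $\Hc$, and by the very definitions of categorical kernel in $\Hc$ and of $A^{\mathrm{co}(A/AB^{+})}$ one has $\mathrm{Hker}(\pi)=A^{\mathrm{co}(A/AB^{+})}=\psi_{A}(A/AB^{+})$. Since $A/AB^{+}=\phi_{A}(B)$, Theorem \ref{magen} gives $\mathrm{Hker}(\pi)=\psi_{A}(\phi_{A}(B))=B$, so that the canonical inclusion realizes $B$ as the kernel of $\pi$ in $\Hc$.

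The equivalence \textup{(1)} $\iff$ \textup{(2)} I would obtain by transporting the problem through the braided strong monoidal functor $F:\Hc\to\mathrm{Hopf_{coc}}(\mathrm{Vec}_{\mathbb{Z}_{2}})$. Using that $F$ is braided and strong monoidal, a direct coherence check yields the compatibility $\xi_{F(A)}\circ\phi^{2}_{A,A}=F(\xi_{A})$; since each $\phi^{2}_{X,Y}$ is scalar multiplication by $\gamma(|x|,|y|)\neq0$ on homogeneous tensors, it stabilizes the subspace $F(A)\otimes F(B)$, so the containment $\xi_{A}(A\otimes B)\subseteq B$ is equivalent to $\xi_{F(A)}(F(A)\otimes F(B))\subseteq F(B)$, i.e.\ to normality of $F(B)$ in $F(A)$. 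By Corollary \ref{c1}, this is equivalent to $F(A)/F(A)F(B)^{+}$ being a quotient super Hopf algebra. The same scalar-preservation argument gives $F(A)F(B)^{+}=F(AB^{+})$, and Remark \ref{quotient} identifies $F(A)/F(AB^{+})$ with $F(A/AB^{+})$. It remains to observe that $F$ reflects the quotient color Hopf algebra structure: combining Lemma \ref{sub} (dually for quotients via Lemma \ref{injsub}) with the fact that $F$ leaves the underlying vector space unchanged, the property that $F(AB^{+})$ is a right ideal of $F(A)$ stable under $S_{F(A)}$ forces the analogous properties of $AB^{+}$ in $A$. Together with the left-ideal and two-sided-coideal properties of $AB^{+}$ already supplied by Theorem \ref{magen}, this produces a graded Hopf ideal of $A$, making $A/AB^{+}$ a quotient color Hopf algebra.

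The main obstacle is the clean verification of the identity $\xi_{F(A)}\circ\phi^{2}_{A,A}=F(\xi_{A})$: although this is a formal consequence of $F$ being braided strong monoidal, the explicit diagram chase involves several instances of the constraints $\phi^{2}$ and the hexagon axiom for the braiding $c'$ on $F(A)\otimes F(A)$, and care is needed to track that no spurious scalar factors appear. Once this compatibility is in hand, the remaining bookkeeping — reflection of subspaces, ideals, and antipode-stability through $F$ — reduces to the observation that the structure isomorphisms $\phi^{2}_{X,Y}$ act by nonzero scalars on homogeneous components and hence preserve every subspace generated by homogeneous elements.
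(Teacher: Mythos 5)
Your proposal is correct, and the outer structure --- (3)$\implies$(1) by Lemma \ref{31}, and (2)$\implies$(3) via $\mathrm{Hker}(\pi)=A^{\mathrm{co}(A/AB^{+})}=\psi_{A}(\phi_{A}(B))=B$ --- coincides with the paper's. Where you genuinely diverge is the implication (1)$\implies$(2): the paper does \emph{not} transfer this step to the super case, but proves it by a direct element computation in the color setting, showing that $AB^{+}$ is a right ideal via the identity $(ab)a'=\phi(|b|,|a'|)\,aa'_{1}\,\xi_{A}(S(a'_{2})\otimes b)\in AB^{+}$ (which uses cocommutativity through $(S\otimes S)\circ\Delta=\Delta\circ S$) and then deducing antipode-stability from $S(ab)=\phi(|a|,|b|)S(b)S(a)$ together with $S(B^{+})\subseteq B^{+}$; the left-ideal and coideal properties come for free from $AB^{+}=\mathrm{Im}(m\circ(\mathrm{Id}\otimes i)-l\circ(\mathrm{Id}\otimes\epsilon))$. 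Your route --- transporting normality through $F$ via $\xi_{F(A)}\circ\phi^{2}_{A,A}=F(\xi_{A})$, invoking Corollary \ref{c1}, and reflecting the resulting right-ideal and antipode-stability of $F(AB^{+})$ back to $AB^{+}$ --- is a legitimate alternative that mirrors how the paper proves Theorem \ref{magen} itself; the key coherence identity does hold (on homogeneous elements the scalars combine, via $\phi\kappa(g,h)=\gamma(g,h)/\gamma(h,g)$ and $\kappa^{2}=1$, to exactly $\phi(|a_{2}|,|x|)$), and the reflection step is sound because $\phi^{2}$ acts by nonzero scalars on homogeneous tensors and $F$ does not change underlying vector spaces. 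What the paper's computation buys is self-containedness --- no extra compatibility lemma for $\xi$ under $F$ and no reflection-of-quotients machinery, which the paper never states as a visible lemma --- at the cost of a longer bicharacter calculation; what your route buys is conceptual uniformity with the proof of Theorem \ref{magen} and the additional observation that $F$ preserves and reflects normality. One small caution: your appeal to ``Lemma \ref{sub} dually for quotients via Lemma \ref{injsub}'' is looser than what is actually needed; the precise statement to verify is that $m_{F(A)}(F(I)\otimes F(A))=F(m_{A}(I\otimes A))$ and $S_{F(A)}(F(I))=F(S_{A}(I))$ for the graded subspace $I=AB^{+}$, which is exactly the style of argument in Lemma \ref{sub} and should be spelled out rather than cited.
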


\begin{proof}
(1)$\implies$(2). Let $B$ be a normal color Hopf subalgebra of $A$ and consider the quotient color left $A$-module coalgebra $A/AB^{+}$. In order to show that this is a quotient color Hopf algebra we have to prove that $AB^{+}$ is a right ideal of $A$ and that it is closed under the antipode of $A$. First we show that it is a right ideal. For any $a,a'\in A$ and $b\in B^{+}$, since $(S\otimes S)\circ\Delta=\Delta\circ S$ with $A$ cocommutative, we obtain that  
\[
\begin{split}
(ab)a'&=ab\epsilon(a'_{1})a'_{2}=\phi(|b|,|a'_{1}|)a\epsilon(a'_{1})ba'_{2}=\phi(|b|,|a'_{1}|)aa'_{1_{1}}S(a'_{1_{2}})bS(S(a'_{2}))\\&=\phi(|b|,|a'_{1}|)\phi(|b|,|S(a'_{2_{1}})|)aa'_{1}S(a'_{2_{1}})bS(S(a'_{2_{2}}))=\phi(|b|,|a'_{1}|)\phi(|b|,|S(a'_{2})_{1}|)aa'_{1}S(a'_{2})_{1}bS(S(a'_{2})_{2})\\&=
\phi(|b|,|a'|)\phi(|S(a'_{2})_{2}|,|b|)aa'_{1}S(a'_{2})_{1}bS(S(a'_{2})_{2})=\phi(|b|,|a'|)aa'_{1}\xi_{A}(S(a'_{2})\otimes b)\in AB^{+}. 
\end{split}
\]
To see that $AB^{+}$ is stable under the antipode, note that $S(ab)=\phi(|a|,|b|)S(b)S(a)$. From $\epsilon(S(b))=\epsilon(b)=0$ with $b\in B^{+}$ it follows that $S(b)\in B^{+}\subseteq AB^{+}$ and therefore $\phi(|a|,|b|)S(b)S(a)\in AB^{+}$ since we have just shown that $AB^{+}$ is a right ideal. $\\$
(2)$\implies$(3). Since $A/AB^{+}$ is a quotient color Hopf algebra and then $\pi:A\to A/AB^{+}$ is in $\Hc$, we have that $A^{\mathrm{co}\frac{A}{AB^{+}}}=\{a\in A\ |\ a_{1}\otimes\pi(a_{2})=a\otimes1_{A/AB^{+}}\}=\mathrm{Hker}(\pi)$ in $\Hc$. But now $A^{\mathrm{co}\frac{A}{AB^{+}}}=\psi_{A}(\phi_{A}(B))=B$ by Theorem \ref{magen} and then $B=\mathrm{Hker}(\pi)$. Hence $(B,j)$ is the kernel of $\pi$ in $\Hc$, where $j:B\to A$ is the canonical inclusion. $\\$
We already know by Lemma \ref{31} that (3)$\implies$(1) holds true and then we are done.
\end{proof}

\begin{invisible}
Furthermore, in the case of kernels we can prove something more as we will show in Proposition \ref{kernel}.

\begin{lem}
    Given a cocommutative color Hopf algebra $D$ and a color Hopf subalgebra $C\subseteq D$, if $F(C)$ is a normal super Hopf subalgebra of $F(D)$, then $D/DC^{+}$ is a quotient color Hopf algebra.
\end{lem}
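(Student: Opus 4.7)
The plan is to transfer the normality hypothesis from the super setting back to the color setting, exploiting that the functor $F$ leaves the underlying vector space unchanged and only twists the structural morphisms by the isomorphisms $\phi^{2}$. The key preliminary identification is that $DC^{+}$ and $F(D)F(C)^{+}$ agree as subspaces of the common underlying vector space. Indeed $C^{+} = F(C)^{+}$ because $\phi^{0} = \mathrm{Id}_{\Bbbk}$ forces $\epsilon_{F(C)} = \epsilon_{C}$, while $m_{F(D)} = F(m_{D})\circ(\phi^{2}_{D,D})^{-1}$ differs from $m_{D}$ only by a nonzero scalar $\gamma(g,h)^{-1}$ on pairs of homogeneous components, so the linear spans coincide. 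This is the one step that requires real care.

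Once this is in place, since $F(C)$ is a normal super Hopf subalgebra of $F(D)$, I would invoke Corollary \ref{c1} applied to $F(D)$ in $\mathrm{Hopf}_{\mathrm{coc}}(\mathrm{Vec}_{\mathbb{Z}_{2}})$ to conclude that $F(D)/F(D)F(C)^{+}$ is a quotient super Hopf algebra. In particular $F(D)F(C)^{+}$ is a two-sided ideal of $F(D)$ stable under $S_{F(D)}$, and a two-sided coideal.

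Now I would descend each of these closure properties to $D$. The antipode $S_{D}$ coincides with $S_{F(D)}$ as a linear map, so stability under $S_{D}$ is immediate from the subspace identification $DC^{+} = F(D)F(C)^{+}$. For the two-sided ideal property, the left ideal part is automatic (as already observed inside the proof of Corollary \ref{kernel}); for the right ideal part, if $x\in DC^{+}$ and $d\in D$ are homogeneous, then by hypothesis $m_{F(D)}(x\otimes d) = \gamma(|x|,|d|)^{-1}xd$ lies in $F(D)F(C)^{+} = DC^{+}$, hence $xd\in DC^{+}$ after rescaling by the nonzero scalar $\gamma(|x|,|d|)$. The two-sided coideal property of $DC^{+}$ in $D$ is automatic, via the formula $DC^{+} = \mathrm{Im}(m_{D}\circ(\mathrm{Id}_{D}\otimes i) - l\circ(\mathrm{Id}_{D}\otimes\epsilon))$ already used in Corollary \ref{kernel}.

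Putting these together, $DC^{+}$ is a graded bi-ideal of $D$ closed under $S_{D}$, so Remark \ref{oso} yields directly that $D/DC^{+}$ is a quotient color Hopf algebra, as required. The main obstacle is keeping bookkeeping of the scalar factors produced by $\phi^{2}_{D,D}$ when going back and forth between the multiplications of $D$ and $F(D)$; once this is managed, the rest is a straightforward transfer of subspace closure properties along $F$.
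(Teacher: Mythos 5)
Your argument is correct and follows essentially the same route as the paper's: both apply Corollary \ref{c1} to $F(C)\subseteq F(D)$ to get that $F(D)/F(D)F(C)^{+}$ is a quotient super Hopf algebra, identify $F(D)F(C)^{+}$ with $DC^{+}$ on the common underlying vector space, and then transfer the ideal, coideal and antipode closure properties back to $D$ using that $F$ only rescales homogeneous components by the nonzero scalars coming from $\phi^{2}$. The only cosmetic difference is that the paper packages this transfer into a separate auxiliary lemma stating that $F$ preserves and reflects quotient color Hopf algebras, whereas you carry out the same scalar-twist bookkeeping inline.
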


\begin{proof}
    We know that $D/DC^{+}$ is in Vec$_{G}$ and that $DC^{+}$ is a left ideal and a two-sided coideal since $DC^{+}=\mathrm{Im}(m\circ(\mathrm{Id}\otimes i)-l\circ(\mathrm{Id}\otimes\epsilon))$, where $i:C\to D$ is the inclusion (see \cite[Proposition 1.4.8]{book5}), thus $D/DC^{+}$ is in $\mathrm{Comon}_{\mathrm{coc}}(\mathrm{Vec}_{G})$. Now, since $F(C)$ is a normal super Hopf subalgebra of $F(D)$, we have that $F(D)/F(D)F(C)^{+}=F(D)/F(DC^{+})$ is in $\mathrm{Hopf}_{\mathrm{coc}}({\mathrm{Vec}}_{\mathbb{Z}_{2}})$ by Corollary \ref{c1}, then $D/DC^{+}$ is in $\Hc$ by Lemma \ref{lemmino}. 
\end{proof}

\begin{prop}
An inclusion $i:C\to D$ in $\Hc$ is a kernel if and only if $F(i):F(C)\to F(D)$ is a kernel in $\mathrm{Hopf}_{\mathrm{coc}}(\mathrm{Vec}_{\mathbb{Z}_{2}})$. 
\end{prop}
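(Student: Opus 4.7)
The plan is to reduce the claim, in both directions, to the normality-based characterisations of kernels established in Corollary \ref{kernel} (for $\Hc$) and Corollary \ref{c1} (for $\mathrm{Hopf}_{\mathrm{coc}}(\mathrm{Vec}_{\mathbb{Z}_{2}})$), and then to match the resulting quotient conditions on the two sides. Concretely, $i:C\to D$ is a kernel in $\Hc$ if and only if $D/DC^{+}$ is a quotient color Hopf algebra, and $F(i):F(C)\to F(D)$ is a kernel in $\mathrm{Hopf}_{\mathrm{coc}}(\mathrm{Vec}_{\mathbb{Z}_{2}})$ if and only if $F(D)/F(D)F(C)^{+}$ is a quotient super Hopf algebra; thus it suffices to prove that these two quotient conditions are equivalent.

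First I would identify the two left ideals: $F(D)F(C)^{+}=DC^{+}$ as subspaces of the common underlying vector space. Indeed $F(C)^{+}=C^{+}$ because the counits agree, and on $G$-homogeneous $d\in D_{g}$ and $c\in C^{+}_{h}$ the identity $m_{F(D)}=F(m_{D})\circ(\phi^{2}_{D,D})^{-1}$ yields $d\cdot_{F(D)}c=\gamma(g,h)^{-1}dc$; since $\gamma(g,h)\neq 0$, the two products span the same $G$-graded subspace of $D$.

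Next I would verify that, for any $G$-graded subspace $I\subseteq D$, the conditions of being a two-sided ideal, a two-sided coideal and stable under the antipode coincide whether $D$ is regarded as a color Hopf algebra or, via $F$, as a super Hopf algebra. This rests on the same scalar-only distortion: $m_{F(D)}$ and $m_{D}$ (resp.\ $\Delta_{F(D)}$ and $\Delta_{D}$) differ by $(\phi^{2}_{D,D})^{\pm 1}$, which acts by nonzero scalars on graded pieces and hence preserves every $G$-graded subspace of $D\otimes D$, while $\epsilon_{F(D)}=\epsilon_{D}$ and $S_{F(D)}=S_{D}$ coincide. Combined with Remark \ref{oso} and its super analogue, this implies that $D/DC^{+}$ is a quotient color Hopf algebra if and only if $F(D)/F(D)F(C)^{+}$ is a quotient super Hopf algebra, closing the loop.

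As a sanity check, the forward implication can alternatively be read off from Proposition \ref{ossi2}: since $\phi^{0}=\mathrm{Id}_{\Bbbk}$, $F$ sends $u_{E}\circ\epsilon_{D}$ to $u_{F(E)}\circ\epsilon_{F(D)}$, and kernels are equalizers with zero. The main obstacle is the reverse direction, where Proposition \ref{ossi2} is not directly applicable because the super-Hopf morphism whose kernel equals $F(i)$ need not lie in the image of $F$; the normality/quotient detour outlined above is tailored precisely to bypass this, exploiting only that $F$ distorts the color structure by nonzero scalar factors on graded pieces.
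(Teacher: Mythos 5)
Your proof is correct; it rests on the same two pillars as the paper's argument (Masuoka's correspondence via Corollary \ref{c1}, and the fact that $F$ only rescales the $G\times G$-graded components of $D\otimes D$, so that being a two-sided ideal, a two-sided coideal and antipode-stable transfers back and forth between the color and super structures), but it assembles them differently. The paper treats the two directions asymmetrically: the forward implication is exactly your sanity check via Proposition \ref{ossi2}; for the converse it deduces from Corollary \ref{c1} that $F(C)$ is a normal super Hopf subalgebra of $F(D)$, transfers the quotient Hopf structure on $F(D)/F(D)F(C)^{+}$ back to $D/DC^{+}$ by the same scalar-distortion argument you describe, verifies explicitly that $\pi\circ i=u_{D/DC^{+}}\circ\epsilon_{D}\circ i$, and concludes that $i=\mathrm{Hker}(\pi)$ because $F$ reflects equalizers. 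You instead reduce both directions symmetrically to the equivalence (2)$\Leftrightarrow$(3) of Corollary \ref{kernel} on the color side and of Corollary \ref{c1} on the super side, so that the whole statement collapses to matching the two quotient conditions via the identification $F(D)F(C)^{+}=F(DC^{+})$ and the scalar transfer. Your packaging is cleaner and dispenses with both the reflection-of-equalizers step and the explicit zero-composite computation, at the cost of invoking the full color-side correspondence (Corollary \ref{kernel}, hence Theorem \ref{magen}), which the paper's proof of this particular statement manages to avoid; since Corollary \ref{kernel} is established independently of this proposition, there is no circularity and your argument is sound.
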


\begin{proof}
We know that if $i$ is a kernel in $\Hc$ then $F(i)$ is a kernel in $\mathrm{Hopf}_{\mathrm{coc}}(\mathrm{Vec}_{\mathbb{Z}_{2}})$ by Proposition \ref{ossi2}, so we prove the other direction. If the inclusion $F(i):F(C)\to F(D)$ is a kernel in Hopf$_{\mathrm{coc}}(\mathrm{Vec}_{\mathbb{Z}_{2}})$, then by Corollary \ref{c1} we obtain that $F(C)$ is a normal super Hopf subalgebra of $F(D)$, so that $\pi:D\to D/DC^{+}$ is in $\Hc$ by the previous lemma and, by the proof of Corollary \ref{c1}, we obtain that $F(i)$ is the kernel of $F(\pi)$ in $\mathrm{Hopf}_{\mathrm{coc}}(\mathrm{Vec}_{\mathbb{Z}_2})$. Now, we know that $C=C^{+}\oplus\Bbbk1_{D}$, since $x=x-\epsilon_{D}(x)1_{D}+\epsilon_{D}(x)1_{D}$ for $x\in C$, and then
\[
\pi(x)=\pi(1_{D}(x-\epsilon_{D}(x)1_{D})+\epsilon_{D}(x)1_{D})=\epsilon_{D}(x)\pi(1_{D})=\epsilon_{D}(x)1_{D/DC^{+}}=u_{D/DC^{+}}\epsilon_{D}(x),
\]
i.e. $\pi\circ i=u_{D/DC^{+}}\circ\epsilon_{D}\circ i$, hence $i$ is the kernel of $\pi$ in $\Hc$ by Proposition \ref{ossi2}. 
\end{proof}

Now we can prove (1) and (2) of Lemma \ref{l1}. So let $f:A\to B$ be a morphism in $\Hc$. 
\end{invisible}

\begin{invisible}
Since $\mathrm{ker}(f)$ is graded by Remark \ref{Im and qu}, the same is true for $F(\mathrm{ker}(f))=\mathrm{ker}(F(f))$, which is also a two-sided ideal and coideal of $F(A)$ and closed under the antipode, so that $F(A)/\mathrm{ker}(F(f))=F(A/\mathrm{ker}(f))$ in Hopf$_{\mathrm{coc}}(\text{Vec}_{\mathbb{Z}_{2}}$) by Lemma \ref{lemmino}. Now $F(A)/\mathrm{ker}(F(f))$ is a quotient super left $F(A)$-module coalgebra; indeed it is a quotient super coalgebra and a super left $F(A)$-module since it is a super algebra and $F(\pi):F(A)\to F(A)/\mathrm{ker}(F(f))$ of super algebras, with $\pi:A\to A/\mathrm{ker}(f)$ in $\Hc$. The left $F(A)$-module action $\mu_{F(A)/\mathrm{ker}(F(f))}:F(A)\otimes F(A)/\mathrm{ker}(F(f))\to F(A)/\mathrm{ker}(F(f))$ is given by $h\otimes(a+\mathrm{ker}(F(f)))\mapsto h\cdot(a+\mathrm{ker}(F(f))):=h\cdot a+\mathrm{ker}(F(f))$. Furthermore we show that $\epsilon_{F(A)/\mathrm{ker}(F(f))}$ and $\Delta_{F(A)/\mathrm{ker}(F(f))}$ are morphisms of left $F(A)$-modules. 
\end{invisible}

\begin{invisible}
We know that $A/\mathrm{ker}(f)$ is in $\Hc$ (see Remark \ref{f(A)}), then $F(A/\mathrm{ker}(f))=F(A)/\mathrm{ker}(F(f))$ is in $\mathrm{Hopf}_{\mathrm{coc}}(\mathrm{Vec}_{\mathbb{Z}_2})$. In particular, it is a quotient super left $F(A)$-module coalgebra, with left $F(A)$-action given by $h\cdot(a+\mathrm{ker}(F(f))):=h\cdot a+\mathrm{ker}(F(f))$ (it is not difficult to see that $\epsilon_{F(A)/\mathrm{ker}(F(f))}$ and $\Delta_{F(A)/\mathrm{ker}(F(f))}$ are left $F(A)$-linear, recalling the monoidal structure of $_{F(A)}\mathrm{Vec}_{\mathbb{Z}_2}$ given in Remark \ref{ms}). In the following, we sometimes avoid to put $F$ for objects and morphisms in order to simplify the notation, denoting differently only the multiplication and the comultiplication of $F(A)$. 
\end{invisible}

\begin{invisible}
Recalling the monoidal structure of $_{A}\mathrm{Vec}_{\mathbb{Z}_2}$ given in Remark \ref{ms}, we have that
\[
\epsilon_{A/\mathrm{ker}(f)}(h\cdot(a+\mathrm{ker}(f)))=\epsilon_{A/\mathrm{ker}(f)}(h\cdot a+\mathrm{ker}(f))=\epsilon_{A}(h\cdot a)=\epsilon_{A}(h)\cdot\epsilon_{A}(a)=h\cdot\epsilon_{A/\mathrm{ker}(f)}(a+\mathrm{ker}(f))
\]
and
\[
\begin{split}
\Delta_{A/\mathrm{ker}(f)}(h\cdot(a+\mathrm{ker}(f)))&=\Delta_{A/\mathrm{ker}(f)}(h\cdot a+\mathrm{ker}(f))=(\pi\otimes\pi)\Delta_{F(A)}(h\cdot a)\\&=(-1)^{|\bar{h}_{2}||\bar{a}_{1}|}(\bar{h}_{1}\cdot\bar{a}_{1}+\mathrm{ker}(f))\otimes(\bar{h}_{2}\cdot\bar{a}_{2}+\mathrm{ker}(f))\\&=(\mu_{A/\mathrm{ker}(f)}\otimes\mu_{A/\mathrm{ker}(f)})((-1)^{|\bar{h}_{2}||\bar{a}_{1}+\mathrm{ker}(f)|}\bar{h}_{1}\otimes(\bar{a}_{1}+\mathrm{ker}(f))\otimes\bar{h}_{2}\otimes(\bar{a}_{2}+\mathrm{ker}(f)))\\&=(\mu_{A/\mathrm{ker}(f)}\otimes\mu_{A/\mathrm{ker}(f)})(\mathrm{Id}\otimes c_{A,A/\mathrm{ker}(f)}\otimes\mathrm{Id})(\bar{h}_{1}\otimes \bar{h}_{2}\otimes(\bar{a}_{1}+\mathrm{ker}(f))\otimes(\bar{a}_{2}+\mathrm{ker}(f)))\\&=(\mu_{A/\mathrm{ker}(f)}\otimes\mu_{A/\mathrm{ker}(f)})(\mathrm{Id}\otimes c_{A,A/\mathrm{ker}(f)}\otimes\mathrm{Id})(\Delta_{F(A)}\otimes\Delta_{A/\mathrm{ker}(f)})(h\otimes(a+\mathrm{ker}(f)))\\&=h\cdot\Delta_{A/\mathrm{ker}(f)}(a+\mathrm{ker}(f)).
\end{split}
\]
\end{invisible}

\begin{invisible}
We can apply Theorem \ref{ma} in order to obtain that $A/A(A^{\mathrm{co}\frac{A}{\mathrm{ker}(f)}})^{+}=A/\mathrm{ker}(f)$, where 
\[
A^{\mathrm{co}\frac{A}{\mathrm{ker}(f)}}=\{a\in A\ |\ (\mathrm{Id}_{A}\otimes\pi)\Delta_{F(A)}(a)=(\mathrm{Id}_{A}\otimes\pi u_{F(A)}\epsilon_{F(A)})\Delta_{F(A)}(a)\}=\mathrm{Hker}(F(\pi)), 
\]
since $F(\pi)$ is a morphism of super algebras. Furthermore, there is a unique map of super vector spaces $\bar{f}:F(A/\mathrm{ker}(f))\to F(B),\ a+\mathrm{ker}(f)\mapsto f(a)$ such that $\bar{f}\circ F(\pi)=F(f)$ and $\bar{f}$ is injective. 
\[
\begin{tikzcd}
	F(A) & F(A/\mathrm{ker}(f))\\
	F(B)
	\arrow[from=1-1, to=2-1, "F(f)"']
	\arrow[from=1-1, to=1-2, "F(\pi)"]
	\arrow[from=1-2, to=2-1,dotted, "\bar{f}"]
\end{tikzcd}
\]
Now if $a\in A^{\mathrm{co}\frac{A}{\mathrm{ker}(f)}}$ we obtain that $(\mathrm{Id}_{A}\otimes f)\Delta_{F(A)}(a)=(\mathrm{Id}_{A}\otimes u_{F(B)}\epsilon_{F(A)})\Delta_{F(A)}(a)$ by composing with $\mathrm{Id}_{A}\otimes\bar{f}$ and using that $f\circ u_{F(A)}=u_{F(B)}$, so $a\in\mathrm{Hker}(F(f))$. On the other hand if $a\in\mathrm{Hker}(F(f))$, then $(\mathrm{Id}_{A}\otimes\bar{f})(\mathrm{Id}_{A}\otimes\pi)\Delta_{F(A)}(a)=(\mathrm{Id}_{A}\otimes\bar{f})(\mathrm{Id}_{A}\otimes\pi u_{F(A)}\epsilon_{F(A)})\Delta_{F(A)}(a)$ using again $f\circ u_{F(A)}=u_{F(B)}$ and we obtain that $a\in A^{\mathrm{co}\frac{A}{\mathrm{ker}(f)}}$ by injectivity of $\bar{f}$, thus $A^{\mathrm{co}\frac{A}{\mathrm{ker}(f)}}=\mathrm{Hker}(F(f))$. Hence we have that $F(A/\mathrm{ker}(f))=F(A)/F(A)(\mathrm{Hker}(F(f)))^{+}$ in Hopf$_{\mathrm{coc}}(\text{Vec}_{\mathbb{Z}_{2}})$, which is also $F(A)/F(A)(\mathrm{Hker}(F(f)))^{+}F(A)$. $\\$
\end{invisible}

Now we can prove (1) and (2) of Lemma \ref{l1}. So let $f:A\to B$ be a morphism in $\Hc$ and consider the factorization $f=i\circ p$ in $\Hc$ obtained by taking $p:A\to A/A(\mathrm{Hker}(f))^{+}A$ as the cokernel of the kernel of $f$ in $\Hc$. We have that 
$A/\mathrm{ker}(f)$ is a quotient color Hopf algebra by Remark \ref{f(A)}. Thus, since $\pi:A\to A/\mathrm{ker}(f)$ is in $\Hc$, we have that $A^{\mathrm{co}\frac{A}{\mathrm{ker}(f)}}=\mathrm{Hker}(\pi)$ in $\Hc$. Furthermore, there exists a unique map $\bar{f}:A/\mathrm{ker}(f)\to B,\ a+\mathrm{ker}(f)\mapsto f(a)$ in Vec$_{G}$ and then in $\Hc$ such that $\bar{f}\circ\pi=f$ and $\bar{f}$ is injective. Thus $\mathrm{Hker}(\pi)=\mathrm{Hker}(f)$ in $\Hc$ and so we obtain
\[
A/\mathrm{ker}(f)=\phi_{A}(\psi_{A}(A/\mathrm{ker}(f)))=\phi_{A}(A^{\mathrm{co}\frac{A}{\mathrm{ker}(f)}})=\phi_{A}(\mathrm{Hker}(f))=A/A(\mathrm{Hker}(f))^{+}.
\]
Then $\mathrm{ker}(f)=A(\mathrm{Hker}(f))^{+}$ and so also $\mathrm{ker}(f)=A(\mathrm{Hker}(f))^{+}A$, hence $p=\pi$ and $i=\bar{f}$ is injective and then a monomorphism in $\Hc$. Observe that, clearly, $\mathrm{Hker}(f)^{+}\subseteq\mathrm{ker}(f)$ since with $x\in\mathrm{Hker}(f)$ we have that $f(x)=\epsilon(x)1_{B}$. Thus we have obtained a decomposition $f=i\circ p$ in $\Hc$ with $i$ a monomorphism in $\Hc$ and $p$ a regular epimorphism in $\Hc$, so (1) of Lemma \ref{l1} is proved. 

\begin{invisible}
Since $F$ preserves kernels and cokernels by Propositions \ref{ossi2} and \ref{coequalizer}, we obtain that $F(\pi)=F(p)$, then $F(i)=\bar{f}$ by uniqueness, which is injective and so $i$ is injective. Hence we have obtained a decomposition $f=i\circ p$ in $\Hc$ with $i$ injective and then a monomorphism in $\Hc$ and $p$ a cokernel and then a regular epimorphism in $\Hc$, thus every $f$ in $\Hc$ has a factorization reg epi-mono. Hence (1) of Lemma \ref{l1} is proved. Observe that, clearly, $\mathrm{Hker}(f)^{+}\subseteq\mathrm{ker}(f)$ since with $x\in\mathrm{Hker}(f)$ we have that $f(x)=\epsilon(x)1_{B}$, thus $A(\mathrm{Hker}(f))^{+}A\subseteq\mathrm{ker}(f)$ and now, since as vector spaces $A/\mathrm{ker}(f)=A/A(\mathrm{Hker}(f))^{+}A$, we have that $\mathrm{ker}(f)=A(\mathrm{Hker}(f))^{+}A$. 
\end{invisible}

\begin{lem}\label{monepi}
In $\Hc$ regular epimorphisms are exactly the surjective morphisms and monomorphisms are exactly the injective morphisms.
\end{lem}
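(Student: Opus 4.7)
The plan is to address both characterisations using the regular epimorphism-monomorphism factorisation $f = i \circ p$ constructed above, together with two general facts that I would recall at the outset: (i) a morphism that is simultaneously a regular epimorphism and a monomorphism must be an isomorphism (being a coequaliser of a parallel pair forced to be equal, hence effectively a coequaliser of equal arrows), and (ii) a bijective morphism in $\Hc$ is automatically an isomorphism, since its set-theoretic inverse preserves the $G$-grading (by Remark \ref{Im and qu}) and all Hopf operations.

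For the equivalence between regular epimorphisms and surjective morphisms: by the explicit construction in the coequaliser subsection, coequalisers in $\Hc$ are canonical projections $B \to B/I$ and are therefore surjective, so by uniqueness of coequalisers up to isomorphism, every regular epimorphism in $\Hc$ is surjective. Conversely, given a surjective $f : A \to B$, I would apply the factorisation $f = i \circ p$, where $p$ is the cokernel of $\mathrm{Hker}(f)$ (in particular surjective, as a canonical projection) and $i$ is a monomorphism that has already been shown to be injective. Surjectivity of $f$ then forces $i$ to be surjective, hence bijective, hence an isomorphism by (ii). Therefore $f$ is the composition of a regular epimorphism with an isomorphism, and is itself a regular epimorphism.

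For the equivalence between monomorphisms and injective morphisms: injectivity immediately implies the monomorphism property, since morphisms in $\Hc$ are underlying functions. Conversely, suppose $f$ is a monomorphism. In the factorisation $f = i \circ p$ the factor $p$ is also a monomorphism (if $p \circ g_1 = p \circ g_2$ then $f \circ g_1 = i \circ p \circ g_1 = i \circ p \circ g_2 = f \circ g_2$, whence $g_1 = g_2$). Since $p$ is both a regular epimorphism and a monomorphism, (i) yields that $p$ is an isomorphism; but $p$ is the canonical projection $A \to A/\ker(f)$, so this forces $\ker(f) = 0$, i.e.\ $f$ is injective.

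The only non-trivial point is the general categorical implication in (i), which however reduces to the observation that a coequaliser of a pair $(u,v)$ that is also a monomorphism satisfies $u = v$ and is then easily seen to be an isomorphism; the rest of the argument relies purely on the factorisation and on the identification $\ker(f) = A(\mathrm{Hker}(f))^+ A$ already established.
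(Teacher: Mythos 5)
Your proof is correct. The half concerning regular epimorphisms coincides with the paper's argument: coequalizers in $\Hc$ are canonical projections, hence surjective, and for a surjective $f$ the factorization $f=i\circ p$ forces the (already injective) morphism $i$ to be a bijection, hence an isomorphism, so that $f$ is a cokernel. For the half concerning monomorphisms you take a genuinely different, though equally short, route. The paper argues directly on the categorical kernel: from $f\circ j=f\circ u_{A}\circ\epsilon_{A}\circ j$ and $f$ mono one cancels $f$ to get $j=u_{A}\circ\epsilon_{A}\circ j$, so $\mathrm{Hker}(f)=\Bbbk 1_{A}$, whence $\mathrm{Hker}(f)^{+}=0$ and $\ker(f)=A(\mathrm{Hker}(f))^{+}A=0$. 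You instead observe that $p$ inherits the monomorphism property from $f=i\circ p$, invoke the standard fact that a morphism which is simultaneously a regular epimorphism and a monomorphism is an isomorphism, and conclude that the projection $A\to A/\ker(f)$ is injective, i.e.\ $\ker(f)=0$. Both arguments ultimately rest on the identity $\ker(f)=A(\mathrm{Hker}(f))^{+}A$ established just before the lemma; yours buys a cleaner categorical statement at the price of an extra general lemma, while the paper's computation with the zero morphism is marginally more self-contained. Either way the conclusion is sound.
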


\begin{proof}
A regular epimorphism in $\Hc$ is surjective since in $\Hc$ every coequalizer is a projection and if $f$ in $\Hc$ is surjective, by its decomposition $f=i\circ p$ in $\Hc$, we obtain that $i$ is surjective and then it is an isomorphism in $\Hc$, hence $f$ is a cokernel, so a regular epimorphism. Furthermore, in $\Hc$ an injective map is clearly a monomorphism and also the vice versa is true. Indeed, given $f:A\to B$ a monomorphism in $\Hc$ and $j:\mathrm{Hker}(f)\to A$ the categorical kernel of $f$, from $f\circ j=f\circ u_{A}\circ\epsilon_{A}\circ j$ we obtain that $j=u_{A}\circ\epsilon_{A}\circ j$, i.e., if $a\in\mathrm{Hker}(f)$ then $a=\epsilon_{A}(a)1_{A}$, hence $a\in\Bbbk1_{A}$. Clearly every $x\in\Bbbk1_{A}$ is in $\mathrm{Hker}(f)$ and then $\mathrm{Hker}(f)=\Bbbk1_{A}$, so $\mathrm{Hker}(f)^{+}=0$ and then, since $\mathrm{ker}(f)=A(\mathrm{Hker}(f))^{+}A$, we have that $\mathrm{ker}(f)=0$, thus $f$ is injective. 
\end{proof}

Now condition (2) of Lemma \ref{l1} is easily satisfied. In $\Hc$ binary products are tensor products, so $E\times A=E\otimes A$ and $E\times B=E\otimes B$ and the induced arrow $\mathrm{Id}_{E}\times f$ is the map $\mathrm{Id}_{E}\otimes f$. Indeed
\[
l_{B}\circ(\epsilon_{E}\otimes\mathrm{Id}_{B})\circ(\mathrm{Id}_{E}\otimes f)=l_{B}\circ(\mathrm{Id}_{\Bbbk}\otimes f)\circ(\epsilon_{E}\otimes\mathrm{Id}_{A})=f\circ l_{A}\circ(\epsilon_{E}\otimes\mathrm{Id}_{A})
\]
and $r_{E}\circ(\mathrm{Id}_{E}\otimes\epsilon_{B})\circ(\mathrm{Id}_{E}\otimes f)=\mathrm{Id}_{E}\circ r_{E}\circ(\mathrm{Id}_{E}\otimes\epsilon_{A})$. So, given a regular epimorphism $f:A\to B$ in $\Hc$, this is surjective, then $\mathrm{Id}_{E}\otimes f$ is surjective, hence a regular epimorphism by Lemma \ref{monepi}. Now, in order to obtain the regularity of $\Hc$, we show (3) of Lemma \ref{l1} by proving the stability of surjective maps along injective ones under pullbacks. $\\$

If we have a morphism $p:A\to B$ in $\Hc$ and we consider $C\subseteq B$ a color Hopf subalgebra of $B$, then the subspace $p^{-1}(C)$ of $A$ defined as in \cite{article23} by $$p^{-1}(C)=\{x\in A|\ (p\otimes\mathrm{Id}_{A})\Delta(x)\in C\otimes A\}$$ is a color Hopf subalgebra of $A$. Indeed observe that
\[
p^{-1}(C)=((p\otimes\mathrm{Id}_{A})\Delta)^{-1}(C\otimes A)=((p\otimes\mathrm{Id}_{A})\Delta)^{-1}(\bigoplus_{g\in G}{(C\otimes A)_{g}})=\bigoplus_{g\in G}{((p\otimes\mathrm{Id}_{A})\Delta)^{-1}((C\otimes A)_{g})}
\]
since the maps preserve gradings. Hence $p^{-1}(C)=\bigoplus_{g\in G}{P_{g}}$ is a graded vector space where 
\[
P_{g}=\{x\in A_{g}\ |\ (p\otimes\mathrm{Id}_{A})\Delta(x)\in (C\otimes A)_{g}\}=((p\otimes\mathrm{Id}_{A})\Delta)^{-1}((C\otimes A)_{g}).
\]
By Remark \ref{o1}, we only have to show that $p^{-1}(C)$ is closed under $\Delta_{A}$, $m_{A}$ and $S_{A}$ and that it contains $1_{A}$. Clearly it contains $1_{A}$ and it is closed under $m_{A}$ since $(p\otimes\mathrm{Id}_{A})\circ\Delta$ is in Mon(Vec$_{G}$), indeed
\[
(p\otimes\mathrm{Id}_{A})\Delta(xy)=m_{C\otimes A}((p\otimes\mathrm{Id}_{A})\Delta\otimes(p\otimes\mathrm{Id}_{A})\Delta)(x\otimes y)\in C\otimes A
\]
with $x,y\in p^{-1}(C)$. It is also easy to see closure under antipode since, in the cocommutative case, we have that $\Delta(S(x))=(S\otimes S)\Delta(x)$, so with $x\in p^{-1}(C)$ we have
\[
(p\otimes\mathrm{Id}_{A})\Delta(S_{A}(x))=(p\otimes\mathrm{Id}_{A})(S_{A}\otimes S_{A})\Delta(x)=(S_{B}\otimes S_{A})(p\otimes\mathrm{Id}_{A})\Delta(x)\in S_{B}(C)\otimes S_{A}(A)\subseteq C\otimes A.
\]
Finally we have to show that $\Delta(p^{-1}(C))\subseteq p^{-1}(C)\otimes p^{-1}(C)$ and, since $A$ is cocommutative, we only have to prove that $\Delta(p^{-1}(C))\subseteq p^{-1}(C)\otimes A$. But now, given $x\in p^{-1}(C)$, we have that
\[
(p\otimes\mathrm{Id}_{A})\Delta(x_{1})\otimes x_{2}=p(x_{1})\otimes x_{2_{1}}\otimes x_{2_{2}}=(\mathrm{Id}_{B}\otimes\Delta)(p\otimes\mathrm{Id}_{A})\Delta(x)\in(\mathrm{Id}_{B}\otimes\Delta)(C\otimes A)\subseteq C\otimes A\otimes A
\]
and then $\Delta(x)\in p^{-1}(C)\otimes A$, so that $p^{-1}(C)$ is closed under $\Delta_{A}$ and hence it is a color Hopf subalgebra of $A$. Now here we show some results which generalize those given in \cite{article1} for the case of Hopf$_{\Bbbk,\mathrm{coc}}$. The following Lemma \ref{color} and Lemma \ref{l11} correspond to \cite[Lemma 2.5]{article1} and \cite[Lemma 2.6]{article1} respectively and they have the same proof, which we report for the sake of completeness and in order to show that there are no problems with the respective generalizations. 
\begin{invisible}
First observe that, clearly, with $A=\bigoplus_{g\in G}{A_{g}}$ in $\Hc$ then $p(A)=\bigoplus_{g\in G}{p(A_{g})}$ is in $\Hc$ since it is a (graded) subalgebra and a (graded) subcoalgebra of $B$ and it is closed under the antipode of $B$.
\end{invisible}

\begin{lem}\label{color}
Given a morphism $p:A\to B$ in $\Hc$, we have the following facts: $\\$ $\\$
1) For all color Hopf subalgebras $C\subseteq B$, $p(p^{-1}(C))\subseteq C$. $\\$ $\\$
2) For all color Hopf subalgebras $D\subseteq A$, $D\subseteq p^{-1}(p(D))$. $\\$ $\\$
3) For all color Hopf subalgebras $C\subseteq B$, then $C=p(p^{-1}(C))$ if and only if $C=p(D)$, for some $D\subseteq A$ color Hopf subalgebra.
\end{lem}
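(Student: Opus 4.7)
The plan is straightforward and breaks into the three parts corresponding to the three claims. The key identities are that $p$ is a coalgebra map (so $(p\otimes p)\circ\Delta_{A}=\Delta_{B}\circ p$, and more usefully $(p\otimes\mathrm{Id}_{A})\circ\Delta_{A}$ is an algebra map when $A$ is cocommutative) together with the counit axiom $r_{A}\circ(\mathrm{Id}_{A}\otimes\epsilon_{A})\circ\Delta_{A}=\mathrm{Id}_{A}$.

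For part 1, I would simply apply $r_{B}\circ(\mathrm{Id}_{B}\otimes\epsilon_{A})$ to the defining relation of $p^{-1}(C)$. Indeed, using naturality of $r$ and that $p$ preserves the counit,
\[
p=p\circ r_{A}\circ(\mathrm{Id}_{A}\otimes\epsilon_{A})\circ\Delta_{A}=r_{B}\circ(\mathrm{Id}_{B}\otimes\epsilon_{A})\circ(p\otimes\mathrm{Id}_{A})\circ\Delta_{A},
\]
so for $x\in p^{-1}(C)$ one has $p(x)\in r_{B}\circ(\mathrm{Id}_{B}\otimes\epsilon_{A})(C\otimes A)\subseteq C$, giving $p(p^{-1}(C))\subseteq C$.

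For part 2, if $D\subseteq A$ is a color Hopf subalgebra then $\Delta_{A}(D)\subseteq D\otimes D$ (since $D$ is in particular a graded subcoalgebra of $A$). Consequently, for $x\in D$,
\[
(p\otimes\mathrm{Id}_{A})\Delta_{A}(x)\in(p\otimes\mathrm{Id}_{A})(D\otimes D)\subseteq p(D)\otimes A,
\]
which is precisely the condition $x\in p^{-1}(p(D))$. Note that we use here that $p(D)$ is a color Hopf subalgebra of $B$ (by Remark \ref{f(A)}), so that $p^{-1}(p(D))$ makes sense.

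For part 3, the forward implication is trivial by taking $D:=p^{-1}(C)$, which is a color Hopf subalgebra of $A$ as shown before the statement. Conversely, if $C=p(D)$ for some color Hopf subalgebra $D\subseteq A$, then applying $p$ to the inclusion $D\subseteq p^{-1}(p(D))=p^{-1}(C)$ from part 2 yields $C=p(D)\subseteq p(p^{-1}(C))$, while the reverse inclusion $p(p^{-1}(C))\subseteq C$ is exactly part 1. Hence $C=p(p^{-1}(C))$.

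No real obstacle is expected: the whole lemma is formal manipulation once one records that $p^{-1}(C)$ is a color Hopf subalgebra of $A$ (which has already been verified just above the statement) and that $\Delta_{A}(D)\subseteq D\otimes D$. The braiding plays no role in the argument, which is why the proof mirrors verbatim the one for ordinary cocommutative Hopf algebras in \cite{article1}.
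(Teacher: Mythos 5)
Your proposal is correct and follows essentially the same route as the paper: part 1 is the counit trick $p(x)=p(x_1)\epsilon(x_2)$, part 2 uses $\Delta_A(D)\subseteq D\otimes D$ together with the fact that $p(D)$ is a color Hopf subalgebra (Remark \ref{f(A)}), and part 3 combines the first two exactly as in the paper. Your observation that the braiding plays no role matches the paper's own remark that the proof is the same as in the ungraded case.
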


\begin{proof}
If $x\in p^{-1}(C)$, i.e. $ p(x_{1})\otimes x_{2}\in C\otimes A$, then $p(x)\in C$, so 1) is shown. Recall that if $D$ is a color Hopf subalgebra of $A$ then $p(D)$ is a color Hopf subalgebra of $B$ by Remark \ref{f(A)}. If $d\in D$ we have that $(p\otimes\mathrm{Id}_{A})\Delta(d)\in p(D)\otimes D\subseteq p(D)\otimes A$ and then also 2) is proved. Finally if $C=p(p^{-1}(C))$ clearly we can take $D=p^{-1}(C)$ while if $C=p(D)$ for some color Hopf subalgebra $D$ of $A$ then $D\subseteq p^{-1}(p(D))=p^{-1}(C)$ by 2) and by applying $p$ one gets $C=p(D)\subseteq p(p^{-1}(C))$ and, since $p(p^{-1}(C))\subseteq C$ by 1), we have $C=p(p^{-1}(C))$ and we obtain 3).
\end{proof}

\begin{lem}\label{l11}
Given $p:A\to B$ in $\Hc$ and an inclusion $i:C\to B$ in $\Hc$, then the diagram
\[
\begin{tikzcd}
  p^{-1}(C) \arrow[r, "\tilde{p}"] \arrow["j"',d]
    & C \arrow[d, "i"] \\
  A \arrow[r,"p"']
& B 
\end{tikzcd}
\]
is a pullback in $\Hc$, where $j$ is the inclusion and $\tilde{p}$ is the restriction of $p$ to $p^{-1}(C)$.
\end{lem}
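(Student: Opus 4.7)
The plan is to verify that the given square is a pullback by checking (i) commutativity of the square and (ii) the universal property, with the morphism out of the apex being uniquely determined because $j$ is a monomorphism.

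First I would observe that the square commutes. By Lemma \ref{color} (1) we have $p(p^{-1}(C))\subseteq C$, so the restriction $\tilde{p}:p^{-1}(C)\to C$ is well defined and is a morphism in $\Hc$ (as the corestriction of a morphism in $\Hc$ onto a color Hopf subalgebra). Commutativity $i\circ\tilde{p}=p\circ j$ is then immediate from the definitions.

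Next I would verify the universal property. Suppose we are given $H$ in $\Hc$ together with morphisms $f:H\to A$ and $g:H\to C$ in $\Hc$ such that $p\circ f=i\circ g$. The key step is to show that $f$ factors through the inclusion $j:p^{-1}(C)\to A$, i.e.\ that $f(H)\subseteq p^{-1}(C)$. For any $h\in H$, since $f$ is a coalgebra map we have $\Delta_{A}(f(h))=f(h_{1})\otimes f(h_{2})$, so
\[
(p\otimes\mathrm{Id}_{A})\Delta_{A}(f(h))=p(f(h_{1}))\otimes f(h_{2})=i(g(h_{1}))\otimes f(h_{2})\in C\otimes A,
\]
using the hypothesis $p\circ f=i\circ g$ and the fact that $i:C\to B$ is the inclusion. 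Hence $f(h)\in p^{-1}(C)$, so there exists a (necessarily unique, since $j$ is injective and hence a monomorphism in $\Hc$ by Lemma \ref{monepi}) map of underlying graded vector spaces $k:H\to p^{-1}(C)$ with $j\circ k=f$. Since $j$ is in $\Hc$ and reflects the color Hopf algebra structure of its image, $k$ inherits from $f$ the property of being a morphism in $\Hc$.

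Finally I would check the two triangle identities: $j\circ k=f$ holds by construction, while
\[
i\circ\tilde{p}\circ k=p\circ j\circ k=p\circ f=i\circ g,
\]
and cancelling the monomorphism $i$ yields $\tilde{p}\circ k=g$. Uniqueness of $k$ with $j\circ k=f$ follows from $j$ being a monomorphism. Thus the given square satisfies the universal property of the pullback of $p$ and $i$ in $\Hc$. I do not anticipate any serious obstacle here: once one knows the explicit description of $p^{-1}(C)$, the only substantive input is the coalgebra-map property of $f$ in the verification that $f(H)\subseteq p^{-1}(C)$, and in particular cocommutativity of $H$ is not needed for this argument.
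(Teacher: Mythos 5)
Your proof is correct and follows essentially the same route as the paper's: commutativity via Lemma \ref{color}(1), and the universal property reduced to showing $f(H)\subseteq p^{-1}(C)$ by applying $(p\otimes\mathrm{Id}_{A})\circ\Delta_{A}$ to $f(h)$ and using that $f$ is a coalgebra map together with $p\circ f=i\circ g$. The only difference is cosmetic (elementwise Sweedler notation versus composition of maps), and your closing observation that cocommutativity of the apex is not needed is consistent with the paper's argument.
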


\begin{proof}
By 1) of Lemma \ref{color} the diagram is commutative. To check the universal property, consider two morphisms $\alpha:T\to A$ and $\beta:T\to C$ in $\Hc$ such that $p\circ\alpha=i\circ\beta$ and let us show that $\alpha(T)\subseteq p^{-1}(C)$. Then, taken $c:T\to p^{-1}(C)$ as $\alpha$ with codomain $p^{-1}(C)$ we have $j\circ c=\alpha$ and $i\circ\tilde{p}\circ c=p\circ j\circ c=p\circ\alpha=i\circ\beta$, hence $\tilde{p}\circ c=\beta$ since $i$ injective; clearly this $c$ is unique since we must have $j\circ c=\alpha$. 
\[
\begin{tikzcd}
  T
  \arrow[drr,bend left, "\beta"]
  \arrow[ddr, bend right, "\alpha"']
  \arrow[dr, dotted, "c" description] & & \\
    & p^{-1}(C) \arrow[r, "\tilde{p}"] \arrow[d, "j"']
      & C \arrow[d, "i"] \\
& A \arrow[r, "p"'] & B
\end{tikzcd}
\]
Thus we show that $\alpha(T)\subseteq p^{-1}(C)$. Given $t\in T$, since $\alpha$ is a morphism of coalgebras, we have
\[
(p\otimes\mathrm{Id}_{A})\Delta_{A}(\alpha(t))=(p\otimes\mathrm{Id}_{A})(\alpha\otimes\alpha)\Delta_{T}(t)=(i\otimes\mathrm{Id}_{A})(\beta\otimes\alpha)\Delta_{T}(t)\in C\otimes A,
\]
then the diagram is a pullback.
\end{proof}

\begin{invisible}
\begin{lem}\label{pull}
    The functor $F:\Hc\to\mathrm{Hopf}_{\mathrm{coc}}(\mathrm{Vec}_{\mathbb{Z}_{2}})$ preserves the pullback of Lemma \ref{l11}.
\end{lem}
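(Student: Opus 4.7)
The plan is to identify the image under $F$ of the pullback square of Lemma \ref{l11} with the analogous pullback in $\mathrm{Hopf}_{\mathrm{coc}}(\mathrm{Vec}_{\mathbb{Z}_{2}})$, whose existence is guaranteed by applying Lemma \ref{l11} itself (its construction of $p^{-1}(C)$ goes through for any cocommutative Hopf monoid in a symmetric monoidal category of $L$-graded vector spaces with $L$ finitely generated abelian) to the morphism $F(p):F(A)\to F(B)$ and the inclusion $F(i):F(C)\to F(B)$, which remains an inclusion by Lemma \ref{sub}. This yields a pullback square in $\mathrm{Hopf}_{\mathrm{coc}}(\mathrm{Vec}_{\mathbb{Z}_{2}})$ with corner
\[
F(p)^{-1}(F(C)):=\{x\in F(A)\mid(F(p)\otimes\mathrm{Id}_{F(A)})\Delta_{F(A)}(x)\in F(C)\otimes F(A)\},
\]
together with the canonical inclusion into $F(A)$ and the restriction of $F(p)$.

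The key step is then to verify that, as super Hopf subalgebras of $F(A)$, the object $F(p^{-1}(C))$ coincides with $F(p)^{-1}(F(C))$. Using $\Delta_{F(A)}=\phi^{2}_{A,A}\circ F(\Delta_{A})$ together with the fact that $\phi^{2}_{A,A}$ acts on a homogeneous tensor $x_{1}\otimes x_{2}$ by multiplication by the non-zero scalar $\gamma(|x_{1}|,|x_{2}|)$, for every homogeneous $x\in A$ one has
\[
(F(p)\otimes\mathrm{Id}_{F(A)})\Delta_{F(A)}(x)=\gamma(|x_{1}|,|x_{2}|)\,p(x_{1})\otimes x_{2}.
\]
Since $\gamma$ is invertible, this element lies in $F(C)\otimes F(A)=C\otimes A$ if and only if $(p\otimes\mathrm{Id}_{A})\Delta_{A}(x)\in C\otimes A$, i.e. if and only if $x\in p^{-1}(C)$. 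Thus the two objects agree as graded subspaces of $F(A)$, both inheriting their $\mathbb{Z}_{2}$-grading from $F(A)$, and by Lemma \ref{injsub} they coincide as objects of $\mathrm{Hopf}_{\mathrm{coc}}(\mathrm{Vec}_{\mathbb{Z}_{2}})$.

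Finally, I would observe that $F(j)$ is exactly the canonical inclusion $F(p)^{-1}(F(C))\hookrightarrow F(A)$ and $F(\tilde{p})$ is the restriction of $F(p)$, so that the image under $F$ of the pullback square of Lemma \ref{l11} is literally the pullback square furnished by its super analogue; hence $F$ preserves this pullback. The only subtle point is keeping track of the twist introduced by $\phi^{2}_{A,A}$ in the comultiplication of $F(A)$, but since this twist acts diagonally by a non-zero scalar on each bihomogeneous component, it does not affect the defining membership condition in $C\otimes A$.
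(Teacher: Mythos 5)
Your proposal is correct and follows essentially the same route as the paper: apply Lemma \ref{l11} in $\mathrm{Hopf}_{\mathrm{coc}}(\mathrm{Vec}_{\mathbb{Z}_{2}})$ to obtain the pullback of $(F(p),F(i))$ with corner $F(p)^{-1}(F(C))$, then identify this corner with $F(p^{-1}(C))$ by observing that $(F(p)\otimes\mathrm{Id})\circ\Delta_{F(A)}=\phi^{2}_{B,A}\circ F((p\otimes\mathrm{Id})\circ\Delta_{A})$ and that the invertible twist $\phi^{2}$ does not alter the membership condition in $C\otimes A$. Your closing remark about the twist acting by nonzero scalars on each bihomogeneous component is exactly the point the paper makes, so nothing is missing.
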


\begin{proof}
    By Lemma \ref{l11} we know that, given $p:A\to B$ and an inclusion $i:C\to B$ in $\Hc$, then $(p^{-1}(C),j,\tilde{p})$ is the pullback of the pair $(p,i)$ in $\Hc$. Since by Lemma \ref{l11} applied for Hopf$_{\mathrm{coc}}(\mathrm{Vec}_{\mathbb{Z}_2})$ we have that $(p^{-1}(F(C)),\bar{j},\bar{p})$ is the pullback of the pair $(p,i)$ in Hopf$_{\mathrm{coc}}(\mathrm{Vec}_{\mathbb{Z}_2})$ with $\bar{j}:p^{-1}(F(C))\to F(A)$ the inclusion and $\bar{p}$ the restriction of $p$ to $p^{-1}(F(C))$, if we show that $F(p^{-1}(C))=p^{-1}(F(C))$ in $\mathrm{Hopf}_{\mathrm{coc}}(\mathrm{Vec}_{\mathbb{Z}_{2}})$ then we are done. Observe that $p^{-1}(F(C))$ is a super Hopf subalgebra of $F(A)$ and, since $p^{-1}(C)$ is a color Hopf subalgebra of $A$, also $F(p^{-1}(C))$ is a super Hopf subalgebra of $F(A)$ by Lemma \ref{sub}.

Now $p^{-1}(C)=\bigoplus_{g\in G}{P_{g}}$ with $P_{g}=((p\otimes\mathrm{Id})\Delta)^{-1}((C\otimes A)_{g})$, then $F(p^{-1}(C))=P_{0}\oplus P_{1}$ with $P_{0}=\bigoplus_{g\in G_{0}}{P_{g}}$ and $P_{1}=\bigoplus_{g\in G_{1}}{P_{g}}$. Thus we have 
\[
\begin{split}
P_{0}&=\bigoplus_{g\in G_{0}}{((p\otimes\mathrm{Id})\Delta)^{-1}((C\otimes A)_{g})}=((p\otimes\mathrm{Id})\Delta)^{-1}(\bigoplus_{g\in G_{0}}{(C\otimes A)_{g}})=((p\otimes\mathrm{Id})\Delta)^{-1}(F(C\otimes A)_{0})\\&=(F(p\otimes\mathrm{Id})(\phi^{2}_{A,A})^{-1}\Delta_{F(A)})^{-1}(F(C\otimes A)_{0})=((\phi^{2}_{B,A})^{-1}(F(p)\otimes\mathrm{Id})\Delta_{F(A)})^{-1}(F(C\otimes A)_{0})\\&=((F(p)\otimes\mathrm{Id})\Delta_{F(A)})^{-1}\phi^{2}_{C,A}(F(C\otimes A)_{0})=((p\otimes\mathrm{Id})\Delta_{F(A)})^{-1}((F(C)\otimes F(A))_{0})=p^{-1}(F(C))_{0}
\end{split}
\]
and similar $P_{1}=p^{-1}(F(C))_{1}$, hence $F(p^{-1}(C))=p^{-1}(F(C))$.

Now $p^{-1}(F(C))$ is composed by those $x\in F(A)$ such that $(F(p)\otimes\mathrm{Id}_{F(A)})\Delta_{F(A)}(x)\in F(C)\otimes F(A)$ and
\[
(F(p)\otimes\mathrm{Id}_{F(A)})\circ\Delta_{F(A)}=(F(p)\otimes\mathrm{Id}_{F(A)})\circ\phi^2_{A,A}\circ F(\Delta_{A})=\phi^2_{B,A}\circ F((p\otimes\mathrm{Id}_{A})\Delta_{A})
\]
and then, equivalently, by those $x\in F(A)$ such that $F((p\otimes\mathrm{Id}_{A})\Delta_{A})(x)\in F(C\otimes A)$ and these are exactly the elements of $F(p^{-1}(C))$.
\end{proof}
\end{invisible}

\begin{prop}\label{surjective}
Consider a surjective morphism $p:A\to B$ in $\Hc$ and an inclusion $i:C\to B$ in $\Hc$. Then the morphism $\tilde{p}$ in the pullback of Lemma \ref{l11} is also surjective.
\end{prop}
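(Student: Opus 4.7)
The plan is to apply the generalized Newman correspondence (Theorem \ref{magen}) twice, once inside $A$ and once inside $B$, in order to exhibit $C$ as $p(K)$ for some color Hopf subalgebra $K\subseteq A$; Lemma \ref{color}(3) will then immediately yield $C=p(p^{-1}(C))$, i.e.\ the surjectivity of $\tilde{p}$.

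First I would let $q:B\to B/BC^+$ denote the canonical projection, so that $C=B^{\mathrm{co}(B/BC^+)}$ by Theorem \ref{magen} applied to $B$. The composite $qp:A\to B/BC^+$ is then surjective and makes $B/BC^+$ a quotient color left $A$-module coalgebra of $A$, with $A$-action induced from the $B$-action via $p$. Applying Theorem \ref{magen} now to $A$, this quotient corresponds to the color Hopf subalgebra $K:=A^{\mathrm{co}(B/BC^+)}\subseteq A$, and the bijectivity of the correspondence forces the canonical isomorphism $A/AK^+\cong B/BC^+$ to be induced by $qp$; in particular $AK^+=\ker(qp)=p^{-1}(BC^+)$.

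Next I would verify that $p(K)\subseteq C$ by a short Sweedler computation: for $k\in K$, applying $p\otimes\mathrm{Id}$ to $k_1\otimes qp(k_2)=k\otimes q(1_B)$ and using that $p$ is a morphism of coalgebras yields $p(k)_1\otimes q(p(k)_2)=p(k)\otimes q(1_B)$, hence $p(k)\in B^{\mathrm{co}(B/BC^+)}=C$. Setting $D:=p(K)$, Remark \ref{f(A)} makes $D$ a color Hopf subalgebra of $B$ contained in $C$, and consequently $BD^+\subseteq BC^+$.

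The heart of the argument is then to compute $p(AK^+)$ in two ways. Since $AK^+=p^{-1}(BC^+)$ and $p$ is surjective, $p(AK^+)=BC^+$. On the other hand, because $p$ is an algebra morphism with $p(K^+)=D^+$, we have $p(AK^+)=B\cdot D^+=BD^+$. Comparing forces $BD^+=BC^+$, and the injectivity of $\phi_B$ from Theorem \ref{magen} applied to $B$ gives $D=C$. We have thus exhibited $C$ as $p(K)$ for a color Hopf subalgebra $K\subseteq A$, so Lemma \ref{color}(3) yields $C=p(p^{-1}(C))$, proving that $\tilde{p}$ is surjective. The most delicate step is extracting from Theorem \ref{magen} the identification $AK^+=\ker(qp)$, which is really the content of $\phi_A\circ\psi_A=\mathrm{id}$: $A/AK^+$ and $B/BC^+$ agree not only abstractly but as quotients along their canonical projections, and it is this compatibility that makes the two computations of $p(AK^+)$ comparable.
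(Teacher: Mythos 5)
Your proof is correct and follows essentially the same route as the paper: both apply $\psi_A$ to the quotient $A\to B/BC^+$ to produce a color Hopf subalgebra $K$ (the paper's $D$), use $\phi_A\circ\psi_A=\mathrm{id}$ to identify $AK^+=\ker(qp)$, push this forward along the surjection $p$ to get $Bp(K)^+=BC^+$, and conclude $p(K)=C$ from the injectivity of $\phi_B$ before invoking Lemma \ref{color}(3). Your intermediate Sweedler check that $p(K)\subseteq C$ is correct but redundant, since the final comparison $\phi_B(p(K))=\phi_B(C)$ already forces equality.
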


\begin{proof}
If we compute the pullback of the pair $(p,i)$ in $\Hc$ we obtain $(p^{-1}(C),j,\tilde{p})$ as in Lemma \ref{l11} and we want to show that $\tilde{p}$ is surjective if $p$ is surjective. 
Since $\tilde{p}$ is just given by the restriction of $p$, we have that $\tilde{p}$ is surjective if and only if $C=p(p^{-1}(C))$ and this is equivalent, with $C$ a color Hopf subalgebra of $B$, to prove that $C=p(D)$ for some color Hopf subalgebra $D$ of $A$ by 3) of Lemma \ref{color}. 
We know that the canonical projection $\pi:B\to B/BC^{+}$ is a quotient color left $B$-module coalgebra 
and, since $p$ is a morphism of color Hopf algebras, we have that $\pi\circ p$ is a morphism of color left $A$-module coalgebras, so that $A/\mathrm{ker}(\pi\circ p)$ is a quotient color left $A$-module coalgebra. We set $D:=A^{\mathrm{co}\frac{A}{\mathrm{ker}(\pi\circ p)}}=\psi_{A}(A/\mathrm{ker}(\pi\circ p))$, which is a color Hopf subalgebra of $A$ by Theorem \ref{magen}. Then we obtain
\[
A/AD^{+}=\phi_{A}(D)=\phi_{A}(\psi_{A}(A/\mathrm{ker}(\pi\circ p)))=A/\mathrm{ker}(\pi\circ p)
\]
by Theorem \ref{magen}, hence $AD^{+}=\mathrm{ker}(\pi\circ p)$. 
\begin{invisible}
Let $\pi_{1}:A\to A/\mathrm{ker}(\pi\circ p)$, then we have that there is a unique linear map $\bar{f}:A/\mathrm{ker}(\pi\circ p)\to B/BC^{+}$ such that $\bar{f}\circ\pi_{1}=\pi\circ p$ given by $\bar{f}(a+\mathrm{ker}(\pi\circ p))=p(a)+BC^{+}$, for every $a\in A$. 
\[
\begin{tikzcd}
  A \arrow[r, "\pi_1"] \arrow["p"',d]
    & \frac{A}{\mathrm{ker}(\pi\circ p)} \arrow[d,dotted,"\bar{f}"] \\
  B\arrow[r,"\pi"']
& \frac{B}{BC^{+}} 
\end{tikzcd}
\]
Now $a\in D$ is such that $(\mathrm{Id}_{A}\otimes\pi_{1})\Delta_{F(A)}(a)=a\otimes\pi_{1}(a)$ so, by composing with $l_{A}\circ(\epsilon_{A}\otimes\mathrm{Id})$, one obtain $\pi_{1}(a)=\pi_{1}(\epsilon_{A}(a)1_{A})$, thus if $a\in D^{+}$ then $\pi_{1}(a)=0$, i.e. $a\in\mathrm{ker}(\pi\circ p)$. Hence $D^{+}\subseteq\mathrm{ker}(\pi\circ p)$ and then $AD^{+}\subseteq\mathrm{ker}(\pi\circ p)$ so that, by the previous equality, $AD^{+}=\mathrm{ker}(\pi\circ p)$. 
\end{invisible}
Thus, since $p$ is a surjective morphism of algebras, we obtain 
\[
Bp(D)^{+}=p(A)p(D^{+})=p(AD^{+})=p(\mathrm{ker}(\pi\circ p))=\mathrm{ker}(\pi)=BC^{+}
\]
and then
\[
\phi_{B}(C)=B/BC^{+}=B/Bp(D)^{+}=\phi_{B}(p(D))
\]
so that, by applying $\psi_{B}$ and by using Theorem \ref{magen} again, we obtain that $C=p(D)$.
\end{proof}

We have shown the stability of surjective morphisms (i.e. regular epimorphisms by Lemma \ref{monepi}) along inclusions under pullbacks in $\Hc$. But now every injective morphism $f:C\to B$ in $\Hc$ can be decomposed as $i\circ\phi$ with $\phi$ an isomorphism between $C$ and $f(C)$ and $i$ the inclusion of $f(C)$ into $B$. Now, if we consider the pullback of $p$ along $f$ we have that, since the inner right square is a pullback too by Lemma \ref{l11}, also the left square is a pullback by \cite[Proposition 2.5.9]{book1}.
\[
\begin{tikzcd}
A\times_{B}C\arrow[r, "\tilde{\phi}"] \arrow[d,"\alpha"'] & p^{-1}(f(C))\arrow[r, "j"] \arrow[d,"\tilde{p}"]  & A \arrow[d,"p"] \\
C\arrow[r, "\phi"']          & f(C)\arrow[r, "i"']                                         & B
\end{tikzcd}
\]
Then, since $\phi$ is an isomorphism so is $\tilde{\phi}$ and from $\tilde{p}\circ\tilde{\phi}=\phi\circ\alpha$, we obtain that $\alpha=\phi^{-1}\circ\tilde{p}\circ\tilde{\phi}$. But now, since $\tilde{p}$ is surjective by Proposition \ref{surjective}, then also $\alpha$ is surjective, thus regular epimorphisms are stable under pullbacks along injective morphisms (i.e. monomorphisms by Lemma \ref{monepi}) in $\Hc$ and (3) of Lemma \ref{l1} is proved. We have obtained the following result:

\begin{prop}
    If $G$ is a finitely generated abelian group and char$\Bbbk\not=2$ then the category $\Hc$ is regular.
\end{prop}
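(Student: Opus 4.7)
The plan is to verify the three conditions of Lemma \ref{l1} for $\Hc$, having already finite completeness at our disposal from Section 4. Condition (1), existence of a regular epi--mono factorization for every morphism, is the first to address. Given $f:A\to B$ in $\Hc$, the natural candidate factorization is $f=i\circ p$ where $p:A\to A/A(\mathrm{Hker}(f))^{+}A$ is the cokernel of the kernel of $f$ (which is a regular epi by construction) and $i$ is induced by the universal property. To show $i$ is a monomorphism it suffices to show it is injective, and here I would exploit the generalized Newman correspondence in Theorem \ref{magen}: since $A/\mathrm{ker}(f)$ is a quotient color Hopf algebra (by Remark \ref{f(A)}), one computes $A^{\mathrm{co}(A/\mathrm{ker}(f))}=\mathrm{Hker}(\pi)=\mathrm{Hker}(f)$ where $\pi:A\to A/\mathrm{ker}(f)$, and then $\phi_{A}\circ\psi_{A}=\mathrm{Id}$ gives $A/\mathrm{ker}(f)=A/A(\mathrm{Hker}(f))^{+}$. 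Combined with the inclusion $A(\mathrm{Hker}(f))^{+}A\subseteq\mathrm{ker}(f)$, this forces equality, so $p$ coincides with the canonical projection to $A/\mathrm{ker}(f)$ and $i$ is injective.

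Condition (2), that $\mathrm{Id}_{E}\times f$ is a regular epi whenever $f$ is, is essentially immediate once one characterizes regular epis. From Lemma \ref{monepi}, which identifies regular epis with surjections (using that every coequalizer in $\Hc$ is a canonical projection) and monos with injections (by arguing that a mono $f$ satisfies $\mathrm{Hker}(f)=\Bbbk 1_{A}$, hence $\mathrm{ker}(f)=0$), one gets that binary products in $\Hc$ are tensor products and $\mathrm{Id}_{E}\times f=\mathrm{Id}_{E}\otimes f$ is surjective whenever $f$ is.

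The main obstacle, as always, is condition (3): pullback-stability of regular epimorphisms along (split) monomorphisms. I would actually prove the stronger statement that regular epis are stable along \emph{all} monomorphisms, which in $\Hc$ means along all inclusions. Here the key tool is the explicit description of pullbacks along inclusions from Lemma \ref{l11}: the pullback of $p:A\to B$ along $i:C\hookrightarrow B$ is given by the color Hopf subalgebra $p^{-1}(C)\subseteq A$ with $\tilde{p}$ the restriction of $p$. Surjectivity of $\tilde{p}$ is then equivalent, by part 3) of Lemma \ref{color}, to exhibiting some color Hopf subalgebra $D\subseteq A$ with $p(D)=C$. This is exactly the content of Proposition \ref{surjective}, whose proof crucially uses Theorem \ref{magen}: one sets $D:=\psi_{A}(A/\mathrm{ker}(\pi\circ p))$ where $\pi:B\to B/BC^{+}$, and then the bijection $\phi_{A}\leftrightarrow\psi_{A}$ together with surjectivity of $p$ on algebras yields $Bp(D)^{+}=BC^{+}$ and hence $p(D)=C$ by applying $\psi_{B}\circ\phi_{B}$.

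Finally, to pass from inclusions to arbitrary monomorphisms $f:C\to B$, I would decompose $f=i\circ\phi$ with $\phi:C\to f(C)$ an isomorphism (Lemma \ref{monepi} ensures $f$ is injective) and $i:f(C)\to B$ the inclusion, then use the standard pasting lemma for pullbacks (\cite[Proposition 2.5.9]{book1}): the outer pullback of $p$ along $f$ decomposes into two stacked pullbacks, in the right one $\tilde{p}$ is surjective by Proposition \ref{surjective}, and in the left one the horizontal maps are isomorphisms, so the left vertical map is surjective as well. This concludes the verification of all three conditions of Lemma \ref{l1} and thus establishes regularity of $\Hc$ under the stated assumptions on $G$ and $\Bbbk$.
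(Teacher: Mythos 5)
Your proposal is correct and follows essentially the same route as the paper: the regular epi--mono factorization via the generalized Newman correspondence (Theorem \ref{magen}), the identification of regular epis with surjections and monos with injections (Lemma \ref{monepi}) to handle condition (2), and pullback-stability along inclusions via Proposition \ref{surjective} extended to arbitrary monomorphisms by the pasting lemma. No substantive differences to report.
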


\section{Semi-abelian condition for $\Hc$}

Let $G$ be a finitely generated abelian group and char$\Bbbk\not=2$. In \cite[3.7]{article7} an equivalent characterization for semi-abelian categories is given. It is required that $\Cc$ satisfies the following properties: $\\$ $\\$
1) $\Cc$ has binary products and coproducts and a zero object; $\\$ $\\$
2) $\Cc$ has pullbacks of (split) monomorphisms; $\\$ $\\$
3) $\Cc$ has cokernels of kernels and every morphism with zero kernel is a monomorphism; $\\$ $\\$
4) the Split Short Five Lemma holds true in $\Cc$; $\\$ $\\$
5) cokernels are stable under pullback; $\\$ $\\$
6) images of kernels by cokernels are kernels. $\\$

For the second part of 3) we observe that, since the categorical kernel of a morphism $f:A\to B$ in $\Hc$ is given by the inclusion $i:\mathrm{Hker}(f)\to A$, if this is the zero morphism $u_{A}\circ\epsilon_{\mathrm{Hker}(f)}$ then, given $x\in\mathrm{Hker}(f)$, we have $x=\epsilon(x)1_{A}$ and again $\mathrm{Hker}(f)=\Bbbk1_{A}$ and then $\mathrm{Hker}(f)^{+}=0$. Hence, since we know that the vector space $\mathrm{ker}(f)=A(\mathrm{Hker}(f))^{+}A$, then $f$ is injective or equivalently a monomorphism in $\Hc$ by Lemma \ref{monepi}. Since we have shown that $\Hc$ is pointed, finitely complete (also complete by Remark \ref{lp}), cocomplete, protomodular and regular, properties 1)-5) follow (recall that with $\Cc$ a pointed and finitely complete category 4) is equivalent to the protomodularity of $\Cc$) and then it remains only to prove that the image of a kernel by a cokernel is a kernel. Precisely we want to show that, given $j:\mathrm{Hker}(g)\to X$ a kernel of a mophism $g:X\to Z$ in $\Hc$ and $\mu:X\to X/Xf(A)^{+}X$ a cokernel of a morphism $f:A\to X$ in $\Hc$, there exist a morphism $p:\mathrm{Hker}(g)\to H$ in $\Hc$ and a kernel $\iota:H\to X/Xf(A)^{+}X$ in $\Hc$ such that the following diagram commutes.
\[
  \xymatrix{
     &  A\ar[d]^{f}\\
    \mathrm{Hker}(g)\ar[r]^{j}\ar[d]_{p} &  X \ar[r]^{ g }\ar[d]^{\mu} &Z\\ 
 H\ar[r]_{\iota} &  \frac{X}{Xf(A)^{+}X}}
\]
Now, if we consider the morphism $\mu\circ j$ we know that it has a factorization regular epimorphism-monomorphism in $\Hc$ since this category is regular, i.e. there exist a regular epimorphism $p$ and a monomorphism $\iota$ in $\Hc$ such that $\mu\circ j=\iota\circ p$. But now it is not true in general that every monomorphism is a kernel and then we do not have that $\iota$ is a kernel automatically. We know that $p$ is surjective and $\iota$ is injective by Lemma \ref{monepi}, then we have $\iota=i\circ\iota'$ where $i$ is an inclusion and $\iota'$ is an isomorphism between $p(\mathrm{Hker}(g))$ and $\iota(p(\mathrm{Hker}(g)))=\mu(j(\mathrm{Hker}(g)))=\mu(\mathrm{Hker}(g))$.
\[
\begin{tikzcd}
	\mathrm{Hker}(g) &&& X \\
	& \mu(\mathrm{Hker}(g)) \\
	p(\mathrm{Hker}(g)) &&& \frac{X}{Xf(A)^{+}X}
	\arrow[from=3-1, to=2-2, "\iota'"]
	\arrow[from=2-2, to=3-4, "i"]
	\arrow[from=1-1, to=3-1, "p"']
	\arrow[from=1-1, to=1-4, "j"]
	\arrow[from=1-4, to=3-4, "\mu"]
	\arrow[from=3-1, to=3-4, "\iota"']
\end{tikzcd}
\]
By Corollary \ref{kernel} we have that $\mathrm{Hker}(g)$ is a normal color Hopf subalgebra of $X$ and then $\mu(\mathrm{Hker}(g))$ is a normal color Hopf subalgebra of $X/Xf(A)^{+}X$ by 2) of Lemma \ref{xi}, since $\mu$ is surjective.
So the inclusion $i$ is a kernel again by Corollary \ref{kernel} and, since $\iota'$ is an isomorphism, also $\iota$ is a kernel in $\Hc$ and we are done. Finally we have obtained the following result: 

\begin{thm}
The category $\Hc$ is semi-abelian, if $G$ is a finitely generated abelian group and char$\Bbbk\not=2$.
\end{thm}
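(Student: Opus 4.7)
The plan is to verify the six-property characterization of semi-abelian categories recalled from \cite[3.7]{article7}, relying almost entirely on the machinery already built up in the previous sections. The bulk of the work has in fact been done, so the task largely reduces to assembling the pieces and treating the one condition that is not immediate.

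First I would dispatch conditions 1), 2), 4), 5) and the easy half of 3) at once: pointedness, binary products, binary coproducts, equalizers and pullbacks were constructed in Section 4, giving 1) and 2); cokernels of kernels exist because $\Hc$ is cocomplete; the Split Short Five Lemma (condition 4)) is equivalent to protomodularity in a pointed finitely complete category, and protomodularity was obtained by identifying $\Hc$ with $\mathrm{Grp}(\mathrm{Comon}_{\mathrm{coc}}(\mathrm{Vec}_G))$; stability of cokernels (equivalently, regular epimorphisms) under pullback is one of the defining properties of regularity, just established. For the remaining half of condition 3), that a morphism with zero categorical kernel is a monomorphism: if $\mathrm{Hker}(f)=\Bbbk\cdot 1_A$ then $\mathrm{Hker}(f)^+=0$, and from the equality $\mathrm{ker}(f)=A(\mathrm{Hker}(f))^+A$ proved during the regular-epi/mono factorization one concludes that $f$ is injective, hence a monomorphism by Lemma \ref{monepi}.

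The main obstacle I expect is condition 6): the image of a kernel under a cokernel must again be a kernel. My plan is to start from a kernel $j:\mathrm{Hker}(g)\to X$ and a cokernel $\mu:X\to X/Xf(A)^+X$, factor the composite $\mu\circ j$ by regularity as $\iota\circ p$ with $p$ a regular epimorphism and $\iota$ a monomorphism, and then upgrade $\iota$ from a mere monomorphism to a kernel. The key step will be to decompose $\iota=i\circ\iota'$, where $\iota'$ is an isomorphism onto $\mu(\mathrm{Hker}(g))$ and $i$ is the inclusion of this image into the quotient. By Corollary \ref{kernel}, $\mathrm{Hker}(g)$ is a normal color Hopf subalgebra of $X$; applying item 2) of Lemma \ref{xi}, which says that normality is preserved by surjective morphisms in $\Hc$, to $\mu$ gives that $\mu(\mathrm{Hker}(g))$ is a normal color Hopf subalgebra of $X/Xf(A)^+X$. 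A second application of Corollary \ref{kernel} then produces a morphism in $\Hc$ whose kernel is precisely $i$, and composing with the isomorphism $\iota'$ promotes $\iota$ to a kernel as required.

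The hardest ingredient underpinning this last argument is the normality-preservation statement of Lemma \ref{xi}(2) together with Corollary \ref{kernel}, which themselves rest on the generalized Newman theorem (Theorem \ref{magen}) and therefore on the braided strong monoidal functor $F:\mathrm{Vec}_G\to\mathrm{Vec}_{\mathbb{Z}_2}$ of Proposition \ref{braided strong}, whose existence is what forces the hypotheses that $G$ be finitely generated abelian and $\mathrm{char}\,\Bbbk\neq 2$. Once property 6) is in hand, all six conditions are verified and the theorem follows.
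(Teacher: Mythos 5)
Your proposal follows exactly the paper's argument: conditions 1)--5) of the characterization from \cite[3.7]{article7} are dispatched using pointedness, (co)completeness, protomodularity, regularity and the identity $\mathrm{ker}(f)=A(\mathrm{Hker}(f))^{+}A$, and condition 6) is handled by factoring $\mu\circ j=\iota\circ p$, writing $\iota=i\circ\iota'$, and upgrading $i$ to a kernel via Corollary \ref{kernel} and the normality-preservation statement of Lemma \ref{xi}(2). This is the same route the paper takes, and the argument is correct.
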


\subsection{Some consequences}
Let $G$ be a finitely generated abelian group and char$\Bbbk\not=2$. We can go further and prove something else about $\Hc$, which we know is semi-abelian. Recall \cite[Proposition 5.1.2]{book2} that every semi-abelian category is Mal'cev, so $\Hc$ is an exact Mal'cev category with coequalizers and zero object, thus we know by \cite[Corollary 4.2]{article16} that the category of abelian objects in $\Hc$, which we denote by Ab($\Hc$), is abelian. Hence we want to determine the category Ab($\Hc$) and we use the the characterization given in \cite[Theorem 6.9]{article17} (see also \cite[Proposition 9]{lo}), which states that an object $C$ in a semi-abelian category $\Cc$ is abelian if and only if its diagonal $\langle\mathrm{Id}_{C},\mathrm{Id}_{C}\rangle:C\to C\times C$ is a normal monomorphism, i.e. it is the kernel of some morphism in $\Cc$. But, in $\Hc$, we have that $C\times C=C\otimes C$ and $\langle\mathrm{Id}_{C},\mathrm{Id}_{C}\rangle=(\mathrm{Id}_{C}\otimes\mathrm{Id}_{C})\circ\Delta_{C}=\Delta_{C}$. Hence we have that Ab($\Hc$) is given by those cocommutative color Hopf algebras whose comultiplication is a kernel in $\Hc$.

\begin{thm}
$\mathrm{Ab}(\Hc)$ is the category of commutative and cocommutative color Hopf algebras. In particular this category is abelian.
\end{thm}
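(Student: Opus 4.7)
The plan is to apply the characterization from \cite[Theorem 6.9]{article17} recalled just before the theorem: $C$ lies in $\mathrm{Ab}(\Hc)$ precisely when the diagonal $\Delta_{C}\colon C\to C\otimes C$ is a normal monomorphism, i.e.\ a kernel in $\Hc$. The theorem thus reduces to proving the equivalence
\[
\text{$\Delta_{C}$ is a kernel in $\Hc$}\ \Longleftrightarrow\ \text{$C$ is commutative.}
\]

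For the implication $(\Leftarrow)$, I would first verify by a direct bicharacter computation that if $C$ is commutative then $C\otimes C$ is commutative as well: expanding both sides of $m_{C\otimes C}\circ c_{C\otimes C,C\otimes C}=m_{C\otimes C}$, the extra $\phi$-factors reshuffle and collapse via the skew-symmetry of $\phi$. Since $C$ is cocommutative, $\Delta_{C}$ is a morphism in $\Hc$; by Remark~\ref{f(A)} its image $\Delta(C)$ is a color Hopf subalgebra of $C\otimes C$; and since $l_{C}\circ(\epsilon_{C}\otimes\mathrm{Id}_{C})\circ\Delta_{C}=\mathrm{Id}_{C}$, the map $\Delta_{C}$ factors as an isomorphism $C\to\Delta(C)$ followed by the inclusion $\Delta(C)\hookrightarrow C\otimes C$. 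By Lemma~\ref{xi}(3), every color Hopf subalgebra of the commutative object $C\otimes C$ is normal, and Corollary~\ref{kernel} then makes this inclusion a kernel; hence so is $\Delta_{C}$.

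For the harder implication $(\Rightarrow)$, if $\Delta_{C}$ is a kernel then the same factorization together with Corollary~\ref{kernel} forces $\Delta(C)$ to be a normal color Hopf subalgebra of $C\otimes C$. The key step is to test normality on the element $a\otimes 1_{C}$: using $\Delta_{C\otimes C}(a\otimes 1)=(a_{1}\otimes 1)\otimes(a_{2}\otimes 1)$, $S_{C\otimes C}=S\otimes S$, and the cancellation $\phi(|a_{2}|,|c_{2}|)\phi(|c_{2}|,|a_{2}|)=1_{\Bbbk}$ from skew-symmetry, I expect to obtain
\[
\xi_{C\otimes C}\bigl((a\otimes 1)\otimes\Delta(c)\bigr)=\xi_{C}(a\otimes c_{1})\otimes c_{2}.
\]
By normality this equals $\Delta(e)$ for some $e\in C$. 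Applying $\mathrm{Id}_{C}\otimes\epsilon_{C}$ to both sides yields $e=\xi_{C}(a\otimes c)$, while applying $\epsilon_{C}\otimes\mathrm{Id}_{C}$ yields $e=\epsilon_{C}(a)c$. Combining these identities gives $\xi_{C}(a\otimes c)=\epsilon_{C}(a)c$ for all $a,c\in C$, which by Lemma~\ref{xi}(3) is exactly commutativity of $C$.

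The ``in particular'' clause requires no further argument, having already been observed just before the theorem as a consequence of \cite[Corollary 4.2]{article16} applied to the semi-abelian (hence exact Mal'cev, pointed, with coequalizers) category $\Hc$. The main obstacle I anticipate is the explicit computation of $\xi_{C\otimes C}\bigl((a\otimes 1)\otimes\Delta(c)\bigr)$: testing normality on $a\otimes 1$ rather than on a general $a\otimes b$ is what keeps the proliferation of $\phi$-factors manageable, so that skew-symmetry collapses them to the single factor $\phi(|a_{2}|,|c_{1}|)$ that reassembles into $\xi_{C}(a\otimes c_{1})\otimes c_{2}$ and permits the two counital projections to recover commutativity.
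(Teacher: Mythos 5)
Your proposal is correct and follows essentially the same route as the paper: both reduce, via the factorization of the injective $\Delta_{C}$ through its image and Corollary~\ref{kernel}, to the normality of $\mathrm{Im}(\Delta)$ in $C\otimes C$, handle $(\Leftarrow)$ by Lemma~\ref{xi}(3) applied to the commutative object $C\otimes C$, and handle $(\Rightarrow)$ by evaluating $\xi_{C\otimes C}$ on an element of the form $(a\otimes 1_{C})\otimes\Delta(c)$ and extracting $\xi_{C}(a\otimes c)=\epsilon_{C}(a)c$ from the two counital projections. The computation $\xi_{C\otimes C}((a\otimes 1)\otimes\Delta(c))=\xi_{C}(a\otimes c_{1})\otimes c_{2}$ you anticipate is exactly the identity the paper verifies.
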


\begin{proof}
Given $C$ in $\Hc$, we know that $\Delta$ is injective and then, if we write $\Delta=i\circ\Delta'$ with $\Delta':C\to\mathrm{Im}(\Delta)$ an isomorphism and $i:\mathrm{Im}(\Delta)\to C\otimes C$ the inclusion, we have that $\Delta$ is a kernel in $\Hc$ if and only if $i$ is a kernel in $\Hc$. Furthermore, $i$ is a kernel in $\Hc$ if and only if $\mathrm{Im}(\Delta)$ is a normal color Hopf subalgebra of $C\otimes C$ by Corollary \ref{kernel}. 
If $C$ is a commutative color Hopf algebra, then also $C\otimes C$ is a commutative color Hopf algebra and then $\mathrm{Im}(\Delta)$, which is a color Hopf subalgebra of $C\otimes C$ by Remark \ref{f(A)}, is normal by 3) of Lemma \ref{xi}.
On the other hand, suppose that $\mathrm{Im}(\Delta)$ is a normal color Hopf subalgebra of $C\otimes C$, i.e. that we have $\xi_{C\otimes C}(C\otimes C\otimes\mathrm{Im}(\Delta))\subseteq\mathrm{Im}(\Delta)$. 
For every $a,b,c\in C$ we have that
\[
\begin{split}
\xi_{C\otimes C}(c\otimes1_{C}\otimes a\otimes b)&=m_{C\otimes C}(m_{C\otimes C}\otimes S_{C}\otimes S_{C})(\mathrm{Id}\otimes c_{C\otimes C,C\otimes C})(\Delta_{C\otimes C}\otimes\mathrm{Id})(c\otimes1_{C}\otimes a\otimes b)\\&=m_{C\otimes C}(m_{C\otimes C}\otimes S_{C}\otimes S_{C})(\mathrm{Id}\otimes c_{C\otimes C,C\otimes C})(\phi(|c_{2}|,1_{G})c_{1}\otimes1_{C}\otimes c_2\otimes1_{C}\otimes a\otimes b)\\&=m_{C\otimes C}(m_{C\otimes C}\otimes S_{C}\otimes S_{C})(\phi(|c_{2}\otimes1_{C}|,|a\otimes b|)c_{1}\otimes1_{C}\otimes a\otimes b\otimes c_{2}\otimes1_{C})\\&=m_{C\otimes C}(\phi(|c_{2}|,|a|)\phi(|c_{2}|,|b|)\phi(1_{G},|a|) c_{1}a\otimes b\otimes S_{C}(c_{2})\otimes S_{C}(1_{C}))\\&=\phi(|c_{2}|,|a|)\phi(|c_2|,|b|)\phi(|b|,|c_{2}|)c_{1}aS_{C}(c_{2})\otimes b=\phi(|c_{2}|,|a|)c_{1}aS_{C}(c_{2})\otimes b\\&=(\xi_{C}\otimes\mathrm{Id}_{C})(c\otimes a\otimes b).
\end{split}
\]
Hence, for every $a,c\in C$, we have that
\[
\mathrm{Im}(\Delta)\ni\xi_{C\otimes C}(c\otimes1_{C}\otimes\Delta(a))=(\xi_{C}\otimes\mathrm{Id}_{C})(\mathrm{Id}_{C}\otimes\Delta)(c\otimes a)
\]
and then there exists $x\in C$ such that
\[
\Delta(x)=(\xi_{C}\otimes\mathrm{Id}_{C})(\mathrm{Id}_{C}\otimes\Delta)(c\otimes a).
\]
Thus we obtain 
\[
\begin{split}
x&=l_{C}(\epsilon_{C}\otimes\mathrm{Id}_{C})\Delta(x)=l_{C}(\epsilon_{C}\otimes\mathrm{Id}_{C})(\xi_{C}\otimes\mathrm{Id}_{C})(\mathrm{Id}_{C}\otimes\Delta)(c\otimes a)\\&=l_{C}(\epsilon_{C\otimes C}\otimes\mathrm{Id}_{C})(\mathrm{Id}_{C}\otimes\Delta)(c\otimes a)=l_{C}(\epsilon_{C}\otimes\mathrm{Id}_{C})(c\otimes a)
\end{split}
\]
but we also have that
\[
x=r_{C}(\mathrm{Id}_{C}\otimes\epsilon_{C})\Delta(x)=r_{C}(\mathrm{Id}_{C}\otimes\epsilon_{C})(\xi_{C}\otimes\mathrm{Id}_{C})(\mathrm{Id}_{C}\otimes\Delta)(c\otimes a)=\xi_{C}(c\otimes a_{1})\epsilon_{C}(a_{2})=\xi_{C}(c\otimes a)
\]
and then 
\[
\xi_{C}(c\otimes a)=l_{C}(\epsilon_{C}\otimes\mathrm{Id}_{C})(c\otimes a) \text{ for every }a,c\in C.
\]
Hence $C$ is a commutative color Hopf algebra by 3) of Lemma \ref{xi}. 
\end{proof}

The notion of semi-abelian category was introduced to capture typical algebraic properties of groups but it was noted that there are many significant aspects of groups which are not captured in this more general context, then reinforcements of this notion were born. We recall that a category with finite limits $\Cc$ is called $\textit{algebraically coherent}$ if for each morphism $f:X\to Y$ in $\Cc$ the change-of-base functor $f^{*}:\mathrm{Pt}_{Y}(\Cc)\to\mathrm{Pt}_{X}(\Cc)$ is coherent, i.e. it preserves finite limits and jointly strongly epimorphic pairs (see \cite[Definition 3.1]{article18}). In \cite[Theorems 6.18 and 6.24]{article18} it is shown that semi-abelian categories which are algebraically coherent satisfy both the condition (SH) and (NH) and are peri-abelian and strongly protomodular, thus they are significantly stronger than general semi-abelian categories. So it is interesting to understand if the category $\Hc$ is algebraically coherent, still with $G$ a finitely generated abelian group and char$\Bbbk\not=2$. We recall from \cite{articlegray} that a finitely complete category $\Cc$ is said to be \textit{locally algebraically cartesian closed} when, for every $f:X\to Y$ in $\Cc$, the change-of-base functor $f^{*}:\mathrm{Pt}_{Y}(\Cc)\to\mathrm{Pt}_{X}(\Cc)$ is a left adjoint and that if $\Cc$ is locally algebraically cartesian closed then it is algebraically coherent by \cite[Theorem 4.5]{article18}.
We conclude with the following result:

\begin{prop}
The category of cocommutative color Hopf algebras is action representable and locally algebraically cartesian closed.
\end{prop}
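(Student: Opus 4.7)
The plan is to derive both statements from the identification $\Hc = \mathrm{Grp}(\mathrm{Comon}_{\mathrm{coc}}(\mathrm{Vec}_G))$ established in the protomodularity subsection, together with general categorical results about internal groups in a cartesian closed category.

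The first task is to verify that $\mathcal{V} := \mathrm{Comon}_{\mathrm{coc}}(\mathrm{Vec}_G)$ is a cocomplete cartesian closed category. Cartesian monoidalness was essentially established already: $\Bbbk$ is terminal in $\mathcal{V}$ and the tensor product of cocommutative comonoids coincides with the categorical product on $\mathcal{V}$. Cocompleteness is inherited from the arguments behind Remark \ref{lp} restricted to $\mathcal{V}$. The delicate point is the existence of internal homs. I would adapt the classical construction of the internal hom in the category of cocommutative $\Bbbk$-coalgebras (writing every $C \in \mathcal{V}$ as a filtered colimit of its finite-dimensional subcomonoids via the fundamental theorem of coalgebras, and then using a cofree cocommutative comonoid construction on the $G$-graded duals) to the graded setting. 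This adaptation is legitimate because $\mathrm{Vec}_G \cong {}^{\Bbbk G}\mathfrak{M}$ is a locally finite Grothendieck category in which the classical coalgebraic finiteness theorems remain available, with the gradings preserved throughout.

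Once $\mathcal{V}$ is known to be cocomplete and cartesian closed, both properties transfer to $\mathrm{Grp}(\mathcal{V}) = \Hc$ by general principles that were established for ordinary cocommutative Hopf algebras. For LACC one adapts the known argument: given $f: H \to K$ in $\Hc$, the pullback functor $f^{*}: \mathrm{Pt}_{K}(\Hc) \to \mathrm{Pt}_{H}(\Hc)$ admits a right adjoint built from the internal hom of $\mathcal{V}$ evaluated on kernel objects, plus the standard Beck–Chevalley/semidirect-product machinery valid in any cocomplete cartesian closed base. For action representability one constructs the split extension classifier of $X \in \Hc$ as a suitable subobject of an object of the form $[X, X \rtimes X]$ in $\mathcal{V}$ (a graded analogue of the smash product construction used in the ungraded case), and verifies its universal property using the braiding of $\mathrm{Vec}_G$ exactly as in the ungraded case. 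Algebraic coherence then follows from LACC by the result already quoted from \cite{article18}.

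The principal obstacle is the cartesian closedness of $\mathrm{Comon}_{\mathrm{coc}}(\mathrm{Vec}_G)$: one must check that every step of the classical ungraded construction of internal homs for cocommutative coalgebras respects the $G$-grading, in particular that the cofree cocommutative comonoid on a $G$-graded vector space exists in $\mathcal{V}$ with the expected grading, and that the adjunction isomorphism is natural in graded morphisms. Granted this technical foundation, both LACC and action representability follow uniformly by transporting the classical Hopf algebra arguments, and algebraic coherence is then automatic.
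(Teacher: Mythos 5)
Your skeleton is exactly the paper's: identify $\Hc$ with $\mathrm{Grp}(\mathrm{Comon}_{\mathrm{coc}}(\mathrm{Vec}_{G}))$, establish that the base $\mathcal{V}=\mathrm{Comon}_{\mathrm{coc}}(\mathrm{Vec}_{G})$ is cartesian closed, and then invoke general facts about internal groups in a cartesian closed category. Where you diverge is in how the two nontrivial inputs are justified. For cartesian closedness of $\mathcal{V}$ — the step you rightly single out as the principal obstacle — the paper does not redo the classical coalgebraic construction in the graded setting; it simply cites Porst's general theorem (together with Barr's) that $\mathrm{Comon}_{\mathrm{coc}}(\Mm)$ is cartesian closed whenever $\Mm$ is a symmetric monoidally closed category, which $\mathrm{Vec}_{G}$ is. Your proposed route via the graded fundamental theorem of coalgebras and a graded cofree cocommutative comonoid would work (the finiteness theorems do survive in ${}^{\Bbbk G}\mathfrak{M}$), but it is substantially more labor than necessary and is left as a promissory note in your write-up. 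Likewise, for the transfer to $\mathrm{Grp}(\mathcal{V})$ you propose to rebuild the right adjoint to $f^{*}$ and the split extension classifier by hand, whereas the paper cites Gray's result that $\mathrm{Grp}(\mathcal{C})$ is locally algebraically cartesian closed for $\mathcal{C}$ cartesian closed, and the Borceux--Janelidze--Kelly theorem that internal groups in a cartesian closed category form an action representable category \emph{provided the category is semi-abelian} — a hypothesis you should state explicitly, since it is exactly here that the standing assumptions ($G$ finitely generated, $\mathrm{char}\,\Bbbk\neq 2$) enter through the semi-abelianness already proved. In short: same strategy, but the paper's proof is a short citation argument, and your version trades those citations for constructions whose graded verifications you have sketched rather than carried out.
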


\begin{proof}
By \cite[Proposition 3.2]{articlepo} (see also \cite{book7}), the category $\mathrm{Comon}_{\mathrm{coc}}(\mathrm{Vec}_{G})$ is cartesian closed since $\mathrm{Vec}_{G}$ is a symmetric monoidally closed category (see e.g. \cite{Can}). Thus, since $\Hc=\mathrm{Grp}(\mathrm{Comon}_{\mathrm{coc}}(\mathrm{Vec}_{G}))$, we have that $\Hc$ is locally algebraically cartesian closed by \cite[Proposition 5.3]{article30}. Furthermore, the category of internal groups in a cartesian closed category is always action representable, provided it is semi-abelian, as it is shown in \cite[Theorem 4.4]{article19} and then $\Hc$ is also action representable.
\end{proof}

\textbf{Acknowledgments}. This work started with the master thesis of the author under the supervision of A. Ardizzoni, in which the completeness, cocompleteness and protomodularity of $\Hc$ were proved extending the result achieved in \cite{article1} for Hopf$_{\Bbbk,\mathrm{coc}}$. I would like to thank A. Ardizzoni for the careful reading of this paper and for many meaningful remarks during the development of this work. I would also like to thank A. S. Cigoli for some helpful comments.

\end{document}